\documentclass[10pt,a4paper]{article}

\usepackage[T1]{fontenc}
\usepackage{lmodern}
\usepackage{amsmath,amssymb,amsfonts,amsthm,amsopn}
\usepackage{graphicx,epstopdf}
\usepackage{pgfplotstable,booktabs,multirow,multicol}
\usepackage[normalem]{ulem}
\usepackage{comment}
\usepackage[numbers,sort&compress]{natbib}
\usepackage{hyperref}
\usepackage[nameinlink,noabbrev]{cleveref}

\DeclareGraphicsExtensions{.pdf,.png,.jpg,.eps}

\theoremstyle{plain}

\theoremstyle{definition}

\theoremstyle{remark}

\crefname{hypothesis}{hypothesis}{hypotheses}
\Crefname{hypothesis}{Hypothesis}{Hypotheses}
\crefname{fact}{fact}{facts}
\Crefname{fact}{Fact}{Facts}

\numberwithin{equation}{section}

\title{Numerical investigation of a homogenized model with effective interface conditions for Stokes flow through porous membranes}
\author{\parbox{0.94\textwidth}{\centering
Jonas Knoch\textsuperscript{1}, Pietro Italiano\textsuperscript{2},
Markus Gahn\textsuperscript{3}, Carsten Gräser\textsuperscript{1},
Maria Neuss-Radu\textsuperscript{1}\\[1ex]
\small\textsuperscript{1}Department Mathematik, Friedrich-Alexander-Universität Erlangen-Nürnberg, Cauerstraße 11, 91058 Erlangen, Germany\\
\small\textsuperscript{2}Institute of Geophysics and Geoinformatics, TU Bergakademie Freiberg, Gustav-Zeuner-Straße 12, 09599 Freiberg, Germany\\
\small\textsuperscript{3}Department Mathematik, Universität Augsburg, Universitätsstraße 14, 86159 Augsburg, Germany\\[1ex]
\small\href{mailto:jonas.knoch@fau.de}{jonas.knoch@fau.de}\quad
\href{mailto:pietro.italiano@geophysik.tu-freiberg.de}{pietro.italiano@geophysik.tu-freiberg.de}\\
\small\href{mailto:markus.gahn@uni-a.de}{markus.gahn@uni-a.de}\quad
\href{mailto:graeser@math.fau.de}{graeser@math.fau.de}\quad
\href{mailto:maria.neuss-radu@math.fau.de}{maria.neuss-radu@math.fau.de}
}}
\date{}

\hypersetup{
  pdftitle={Numerical investigation of a homogenized model with effective interface conditions for Stokes flow through porous membranes},
  pdfauthor={Jonas Knoch, Pietro Italiano, Markus Gahn, Carsten Gräser, Maria Neuss-Radu},
  hidelinks
}

\usepackage{mathtools}
\usepackage{float}
\usepackage{comment}
\usepackage{amssymb}
\usepackage{stmaryrd}
\usepackage{multicol}
\usepackage{enumitem}

\usepackage{listings} 
\definecolor{codegreen}{rgb}{0,0.6,0}
\definecolor{codegray}{rgb}{0.5,0.5,0.5}
\definecolor{codepurple}{rgb}{0.58,0,0.82}
\definecolor{backcolour}{rgb}{0.95,0.95,0.92}
\lstdefinestyle{mystyle}{
    backgroundcolor=\color{backcolour},   
    commentstyle=\color{codegreen},
    keywordstyle=\color{magenta},
    numberstyle=\tiny\color{codegray},
    stringstyle=\color{codepurple},
    basicstyle=\ttfamily\footnotesize,
    breakatwhitespace=false,         
    breaklines=true,                 
    captionpos=b,                    
    keepspaces=true,                 
    numbers=none,                    
    numbersep=5pt,                  
    showspaces=false,                
    showstringspaces=false,
    showtabs=false,                  
    tabsize=2
}
\usepackage[caption=false]{subfig} 

\pgfplotsset{compat=1.18}  

\crefname{figure}{Figure}{Figures}

\usepackage[font=footnotesize]{caption}

\newcommand{\oeps}{\Omega_{\varepsilon}}
\newcommand{\oef}{\Omega_{\varepsilon}^f}
\newcommand{\Oepm}{\Omega_{\varepsilon}^{\pm}}

\newcommand{\Geps}{\Gamma_{\varepsilon}}

\newcommand{\inS}[1]{\mbox{in } & #1}
\newcommand{\onS}[1]{\mbox{on } & #1}

\newcommand{\veps}{v_{\varepsilon}}
\newcommand{\peps}{p_{\varepsilon}}

\newcommand{\eps}{\varepsilon}
\newcommand{\R}{\mathbb{R}}

\newtheorem{remark}{Remark}[section]

\newtheorem{proposition}{Proposition}[section]
\newtheorem{lemma}{Lemma}[section]

\graphicspath{{../}}

\begin{document}
\maketitle

\begin{abstract}
We investigate numerically fluid flow through a thin porous layer, which is modelled as a lower-dimensional interface separating two bulk domains. At this interface, we impose effective transmission conditions involving homogenized coefficients rigorously derived via two-scale methods in previous works. These coefficients are determined by cell problems encoding the microscale properties of the porous layer. In a first step, we analyze qualitatively how the effective coefficients depend on the pore geometry. Based on these computations, we then present numerical simulations of the macroscopic models incorporating the effective interface conditions.
Although replacing the thin porous layer by an effective interface substantially reduces the computational cost, the resulting nonstandard transmission conditions pose additional challenges for the numerical implementation. We employ a Taylor--Hood-type mixed finite element discretization with discontinuous
pressure and tangential velocity across the interface and prove stability as well as error estimates for the resulting scheme. Finally, for microscale geometries that are still computationally resolvable, we compare the results of the macroscopic models with those obtained from direct simulations of the microscopic models.
\end{abstract}

\noindent\textbf{Keywords.} Stokes flow; porous membranes; effective transmission conditions; homogenization; finite element method; numerical simulations.

\medskip
\noindent\textbf{MSC 2020.} 35B30, 76M10, 65N30, 76Z05, 76M50.

\section{Introduction}\label{sec:introduction}
We are concerned with the numerical investigation of transmission models describing fluid flow in two bulk domains $\Omega_\eps^\pm$ separated by a thin porous membrane $\Omega_\eps^M$, see \cref{fig:IntroFig}. Modelling approaches based on first-order principles yield models at the   \textit{microscopic} level, where the geometry of $\Omega_\eps^M$ is resolved and a flow problem is solved in the bulk regions as well as in the pores of the membrane. However, numerical simulations for such \textit{microscopic} models are quite computationally expensive due to the complex structure of the porous membrane. Furthermore, in practical applications, it is often the \textit{macroscopic (effective)} behavior of the flow in the bulk regions that is of interest. For this reason, in many modeling papers, see, for example, \cite{Kolitsi2020, shapiro2015,EarlyArterio}, effective transmission models have been derived phenomenologically by replacing the porous membrane by an interface and formulating effective transmission conditions across this interface. A significant drawback of such models is that the effective parameters in the transmission conditions are not related to the microscopic structure of the membrane or the flow within the membrane pores. Thus, they have to be estimated from the data of every individual application scenario. 

Recently, in \cite{NeussRaduGahn2025, GahnNeussRadu2026} (see also \cite{gahn2025beffective}), transmission models of \textit{micro-macro} type, approximating these microscopic models, have been derived by a rigorous homogenization and dimension reduction approach, see again \cref{fig:IntroFig}. These models consist of two subproblems, a macro (effective) transmission problem formulated on the subdomains $\Omega^\pm$ separated by the interface $\Sigma$, and a micro (cell) problem formulated on the standard periodicity cell $Z_f$ characterizing the microscopic geometry. The effective transmission problem consists of the Stokes equations in the bulk domains coupled by effective transmission conditions at the interface $\Sigma$. These transmission conditions include the continuity of the normal component of the velocity, a relation for the jump in the normal component of the normal stress across the interface, and a Navier-slip-type condition for the tangent component of the normal stress on each side of the interface. The effective parameters in the transmission conditions are computed from solutions of the cell problems, which consist of Stokes problems on the standard cell $Z_f$ with periodic boundary conditions in the directions parallel to the membrane and homogeneous Dirichlet conditions on the boundary of the solid phase. On the top and bottom of the standard cell Dirichlet conditions for the velocity are imposed. We remark that the micro and macro subproblems are one-way coupled through the dependence of the effective coefficients on the solutions to the cell problems. On the one hand, this dependence yields an improved model in comparison to the heuristically derived effective models; on the other hand, however, it leads to a nonstandard transmission problem.
\begin{figure}
    \centering
\includegraphics[width=0.725\linewidth]{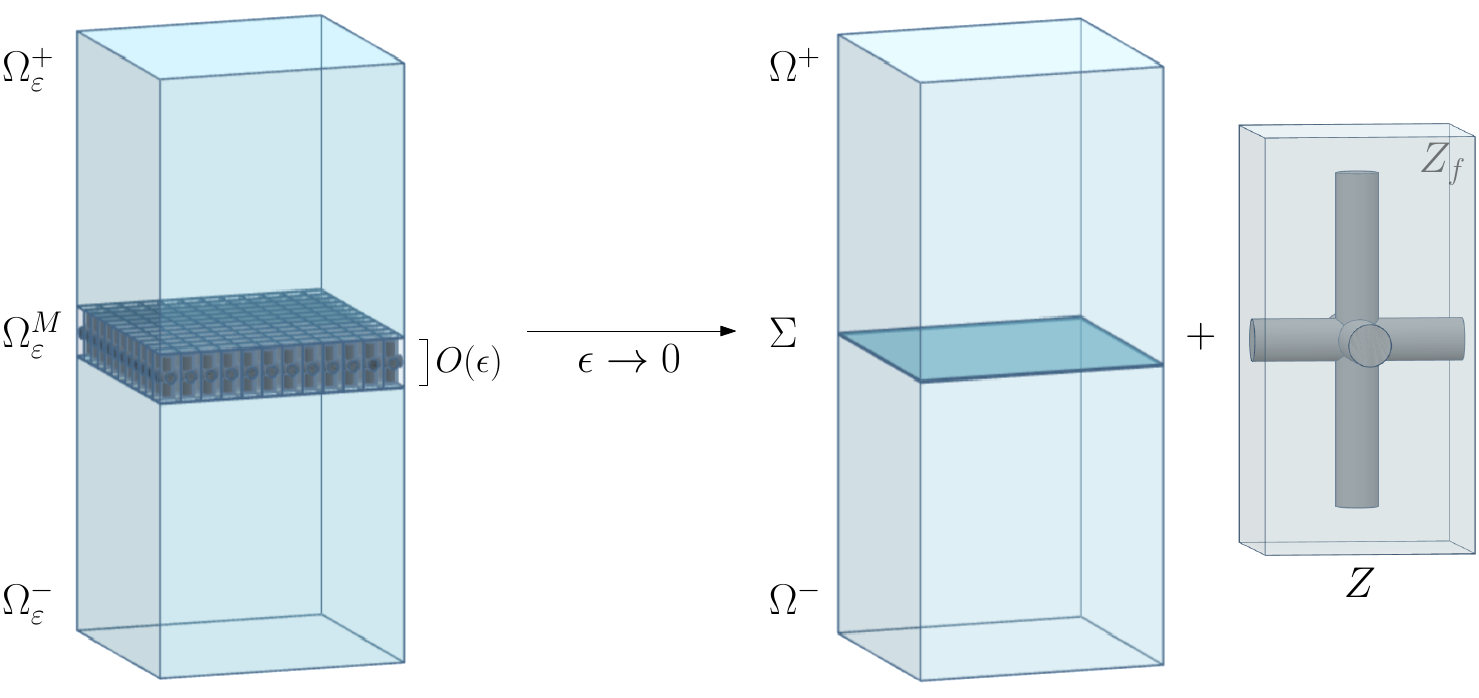}
    \caption{Sketch of the homogenization and dimension reduction approach for the derivation of micro-macro transmission models from microscopic models for flow through porous membranes, together with the reference cell.}
    \label{fig:IntroFig}
\end{figure}

In this paper, we perform the numerical analysis of micro-macro transmission problems described above and develop simulation tools tailored to their specific structure. Furthermore, we perform numerical experiments including different geometries of the microscopic structure, as well as
various boundary conditions at the outer boundary of $\Omega$, which offer detailed information about the behavior of the cell solutions as well as of the effective flow. For both subproblems, the sensitivity of the solutions to changes in the microscopic membrane geometry is studied. 

Micro-macro transmission problems for fluid flow in domains separated by a porous membrane have, e.g., been considered in
\cite{FernandezGerbeauMartin2008}
and \cite{KrierOrlikPanasenkoSteiner2024}. There the cell problems and the effective transmission conditions have been adapted from the literature and differ from those considered in our model. In \cite{FernandezGerbeauMartin2008} the Stokes cell problems are defined on an infinite outer domain and in \cite{KrierOrlikPanasenkoSteiner2024} they are periodic with respect to all coordinate directions in $\R^3$. In our model, the Stokes cell problems are periodic just with respect to the coordinate directions parallel to the membrane and have Dirichlet boundary conditions at the top and bottom of the standard cell.

In both \cite{FernandezGerbeauMartin2008,KrierOrlikPanasenkoSteiner2024}, the fluid velocity is continuous at the interface $\Sigma$, the jump
in the normal component of the normal fluid stress is related to the velocity via a permeability tensor, and the pressure is expected to be discontinuous across $\Sigma$.
Consequently \cite{FernandezGerbeauMartin2008}
develops and analyzes a stabilized mixed finite element discretization with globally continuous velocities and element-wise discontinuous pressure. In the numerical examples it is observed that a classical Taylor--Hood discretization with globally continuous pressure requires severe refinement near $\Sigma$ to provide reasonable approximations, while a modified method with discontinuous pressure across $\Sigma$ performs well. Stability and error bound for the latter method are not discussed. The stabilized method of \cite{FernandezGerbeauMartin2008}
is also used in \cite{KrierOrlikPanasenkoSteiner2024}.
In contrast to the models considered in these works,
the effective (macroscopic) transmission problem obtained here only provides continuity of the normal velocity while the tangential velocity and pressure are in general discontinuous across $\Sigma$. Hence we introduce a novel Taylor--Hood-type mixed discretization where both, the pressure and tangential velocity, are allowed to be
discontinuous across $\Sigma$. The discretization can also be viewed as a coupling of the classical Taylor--Hood discretization for both subdomains where the pressure space is adjusted to the
outer boundary conditions. This coupling view is used to prove stability of the mixed discretization which provides well-posedness and optimal order error bounds by classical saddle point theory which is also observed in numerical examples.

The computation of the effective coefficients requires
to implement the microscopic cell problems with periodic
boundary conditions on a reference cell $Z_f$ which has
a complicated geometry due to the embedded solid structure, while the discretization of the macroscopic transmission problem requires to work with finite element product spaces which are defined on the subdomains
$\Omega^+$ and $\Omega^-$. The implementation of the discretization for both problems was carried out in the modular \texttt{C++} framework \textit{DUNE} \cite{DUNE:2021,DUNE1}. We in particular make use of its possibility to construct complicated product spaces \cite{EngwerEtAl2025} and to implement assemblers by defining the respective variational forms \cite{fufem}.
Support for finite element spaces and variational forms on subdomains and interfaces was added to the respective \textit{DUNE} modules only recently and its implementation was pushed forward by the present application.

To illustrate the influence of the porous layer on the fluid flow, we consider two different cell geometries, a symmetric one, where the solid phase consists of the union of cylinders, and an asymmetric one, which contains an inclined segment. Furthermore, we select four types of boundary conditions at the outer boundary of $\Omega$ driving the flow, and examine the different behavior of the solutions of the transmission problem. It turns out that the influence of the microstructure is captured by the effective transmission conditions, e.g., an asymmetry in the microscopic geometry causes a deflection of the effective velocity. In case of the symmetric cell geometry, we perform a sensitivity study with respect to variations in the height and the radius of the cylinders for the cell solutions, the effective coefficients and the solutions to the effective transmission problem. One interesting feature is that for the case when the height of the vertical cylinder approaches its maximal value (and thus the distance between the solid skeleton and the bulk regions becomes very small), the coefficients explode and the effective velocity at the interface $\Sigma$ converges to zero, i.e., the membrane becomes impermeable. On the other hand, the limit case when the solid skeleton touches the bulk regions has been treated in \cite{NeussRaduGahn2025} and a zero velocity at $\Sigma$ was found. Thus,  the numerical results indicate a continuous dependence of the velocity on the microscopic geometry, a result which has not been proved analytically so far. Last but not least, we perform a numerical validation of the effective model by comparison with solutions to the microscopic model for a specific scenario.

Further micro-macro transmission models obtained by homogenization and dimension reduction for (nonlinear) reaction-diffusion problems can be found in
\cite{NeussRaduLudwig_2010,FreudenbergEden2024}. In both contributions, the cell problem cannot be decoupled from the effective model resulting in strongly (two-way) coupled micro-macro models. 

Our paper is structured as follows. In Section~\ref{sec:mathematicalModel}, we formulate the micro-macro model comprising the effective transmission model, the Stokes cell problems and the effective coefficients. Furthermore, we define adequate function spaces and bilinear forms, which help us to write the weak formulation of the effective transmission model in the form of a saddle point problem, which is solved by using linear existence theory. The numerical computation of cell solutions and effective coefficients, including mesh generation and implementation of the cell problems, a numerical convergence study, and a sensitivity study related to geometric parameters of the microstructure, is performed in Section~\ref{sec:numerical_cellSolCoeff}.
In Section~\ref{sec:macroscopicSimulations}, we introduce
a finite element discretization for the macroscopic effective transmission model, show well-posedness and convergence, perform numerical convergence studies and
numerical experiments for different boundary conditions,
and carry out a parameter study with respect to the 
geometric parameters. Finally, microscopic and effective solutions are compared numerically for a specific computational scenario. A discussion of the obtained results also related to future developments is presented in Section~\ref{sec:conclusionoutlook}.

\section{Statement of the mathematical model}\label{sec:mathematicalModel}
In this section, we give the precise formulation of the mathematical model that will be treated numerically in the following sections. 
Let $\Omega \coloneqq \Sigma\times(-H,H)\subset \mathbb{R}^3$ with $H>0$ and $\Sigma = (a,b)\subset \mathbb{R}^2$, with $a,b \in \mathbb{Z}^2$ and $a_i<b_i$ for $i=1,2$. The subdomains $\Omega^\pm$ are defined by
$$
\Omega^+ \coloneqq \Sigma\times(0,H), \hspace{1.2cm} \Omega^- \coloneqq \Sigma\times(-H,0).
$$ 
Let $\partial_N\Omega \subset \partial \Omega$ be the boundary part of $\Omega$ where normal stress boundary conditions are formulated. We assume that $\partial_N \Omega$ is relatively open (in $\partial \Omega$), in particular for $\partial_N \Omega \neq \emptyset$ we have $|\partial_N \Omega|>0$, where here $|\cdot|$ denotes the $2$-dimensional Hausdorff-measure. Let  $\partial_D \Omega = \partial \Omega \setminus \overline{\partial_N \Omega}$ be the Dirichlet boundary part. We denote by 
\begin{equation}
    \label{Def_boundary_parts}
\partial_D \Omega^\pm = \partial_D \Omega \cap \partial \Omega^\pm, \hspace{1.2cm} \partial_N \Omega^\pm = \partial_N \Omega \cap \partial \Omega^\pm,     
\end{equation}
and by $\nu^\pm$ the outer unit normal to $\partial \Omega^\pm$.
In addition to the domain $\Omega$, we define the periodicity cell $Z$ 
$$
Z \coloneqq Y\times(-1,1) \coloneqq (0,1)^2\times(-1,1),
$$
with top and bottom denoted by 
$$
S^\pm \coloneqq Y\times\{\pm 1\}.
$$
It consists of a solid part $Z_s\subset Z$ (also referred to as \textit{obstacle} or \textit{skeleton}) and a fluid (pore) part $Z_f = Z\setminus \overline{Z_s}$, separated by the interface  $\Gamma = \text{int}(\overline{Z_f} \cap \overline{Z_s})$. Here, by $\text{int}(M)$ we denote the interior of a set $M\subset \mathbb{R}^n$. Hence, we have 
$$
Z = Z_s \cup Z_f \cup \Gamma.
$$
One example of a periodicity cell to be considered in this paper is given in \cref{fig:IntroFig}, where the solid phase $Z_s$ is represented as a cross of cylinders.

Throughout the paper we assume that $\partial Z_s \cap S^\pm = \emptyset$, i.e., the solid skeleton is not touching the upper and lower boundary of the cell.
The other case was also treated in \cite[Section 6.3]{NeussRaduGahn2025} and leads to an impermeable membrane for the fluid flow. A discussion of the case in which the distance between the solid skeleton and the top/bottom of $Z$ approaches zero is given in Remark~\ref{remark_divergence}

We further assume that $Z_f$ and $Z_s$ are open, connected with Lipschitz-boundary,
and the lateral boundary is $Y$-periodic, which means that for $i\in\{1,2\}$ and $*\in\{s,f\}$ 
$$
\partial Z_{*} \cap \{y_i=0\}+e_i = \partial Z_{*} \cap \{y_i=1\}.
$$

\subsection{Notations and function spaces}
For $i\in {1,2,3}$, we denote by $e_i \in \R^3$ the standard unit vectors. For function spaces, we use the index $\#$ to indicate functions which are periodic with respect to the first two variables. Hereby, we will consider periodic functions defined on the standard cell $Z$ (which we call \textit{$Y$-periodic}) and periodic functions on the subdomains $\Omega^\pm$ (which we call \textit{$\Sigma$-periodic}). More precisely, we define 
\begin{align*}
C_\#^\infty(Z)\coloneqq\{\phi\in C^\infty(\mathbb{R}^2\times [-1, 1]) \ : \ \phi (y + e_i) = \phi(y),\, y\in Z, \, i=1,2\},
\end{align*}
and denote by $\displaystyle H^1_\#(Z)$ its completion with respect to the $\displaystyle H^1(Z)$-norm. For $*\in\{s,f\}$, we denote by $\displaystyle H^1_\#(Z_*)$ the restriction of $\displaystyle H^1_\#(Z)$-functions to $\displaystyle Z_*$. Furthermore, we define 
$$C_\#^\infty(\Omega^\pm)\coloneqq\{\phi\in C^\infty(\mathbb{R}^2\times [-H,H]) \ : \ \phi(x+l_ie_i) = \phi(x),\, x \in \Omega^\pm, \, i=1,2\},$$
where $l_i = b_i-a_i, \, i=1,2$.
Then the function space $\displaystyle H^1_\#(\Omega^\pm)$ is the completion of $\displaystyle C^\infty_\#(\Omega^\pm)$ with respect to the $\displaystyle H^1(\Omega)$-norm. 
For a Lipschitz domain $U\subset \mathbb{R}^n$ and $\omega\subset \partial U$, we define 
$$
H^1(U,\omega)\coloneqq\{u\in H^1(U) \ : u=0 \text{ on }\omega\}.
$$
In the formulation of the cell problems below, we will use the space
$$
H^1_\#(Z_f,\Gamma):= H^1_\#(Z_f) \cap H^1(Z_f,\Gamma).
$$

For a Banach space $B$ and an arbitrary open set $U\subset \mathbb{R}^m$, we write $\displaystyle L^p(U,B)$ for the usual Bochner spaces, for $p\in[1,\infty]$.

\subsection{The transmission  problem}\label{sec:transmisionProblem} 
We now introduce the mathematical model to be studied numerically in this paper. It consists of Stokes problems in the two bulk domains $\Omega^\pm$, which are coupled by \textit{effective transmission conditions} at the interface $\Sigma$, and the boundary conditions at the outer boundary of $\Omega$. The transmission conditions are given with the help of solutions to \textit{cell problems} formulated on the fluid part $Z_f$ of the periodicity cell $Z$. We focus on applications where the solid part $Z_s$ is nonempty, see e.g., \cref{fig:PeriodicityCell-sym_asym}(b) and (c). However, we are interested to understand the behavior of the solutions to the transmission problem also in comparison to the situation when no obstacle is present, that is for $Z_s=\emptyset$, like in \cref{fig:PeriodicityCell-sym_asym}(a). 

We consider the following mathematical model for the unknown velocity and pressure $(v^+, v^-)$ and $(p^+,p^-)$
\begin{subequations}\label{prob:macroscopic}
\begin{align}
- \nabla\cdot D(v^\pm) + \nabla p^\pm &= f^\pm 
    & \mbox{ in } \Omega^\pm,\label{eq:macro_a}\\[1ex]
\nabla\cdot v^\pm &= 0 
    &\mbox{ in } \Omega^\pm,\label{eq:macro_b} \\[1ex] 
    [v^+]_3 &= [v^-]_3 
    &\mbox{ on } \Sigma,\label{eq:macro_c}
    \\[1ex]
    -\llbracket (D(v) - p I)\nu\cdot\nu\rrbracket
 &= K^+ v^+ \cdot\nu^+ - K^- v^-\cdot\nu^- 
 &\mbox{ on } \Sigma,\label{eq:macro_Tnormal}\\[1ex] [ (D(v^\pm) - p^\pm I)\nu^\pm]_t
 &= -[K^\pm v^\pm]_t - M^\mp v^\mp 
 &\mbox{ on }\Sigma.\label{eq:macro_Ttangential}
\end{align}
\end{subequations}
To close the system, we impose boundary conditions on the outer boundary of $\Omega$. These may be mixed Dirichlet and Neumann boundary conditions (which also include the pure Dirichlet and pure Neumann ones)
\begin{subequations}\label{eq:bound_cond_Dir_Neumann}
\begin{align}
v^\pm &= v_D^\pm 
    &\mbox{ on }\partial_D \Omega^\pm, \label{eq:macro_D}\\[1ex] 
    -[D(v^\pm) - p^\pm I]\nu^\pm &= g^\pm 
    &\mbox{ on }\partial_N \Omega^\pm, \label{eq:macro_N}
\end{align}
\end{subequations}
or, alternatively, Dirichlet and/or Neumann boundary conditions on the top and bottom boundaries $\Gamma^\pm_\text{cap} :=\Sigma \times \{\pm H\}$, and periodic boundary conditions at the lateral boundary of $\Omega$, see also Scenarios A-D in Section~\ref{sec:simul_transm_model}.

In the transmission conditions \eqref{eq:macro_Tnormal}-\eqref{eq:macro_Ttangential}, $\llbracket \phi \rrbracket \coloneqq \phi^+ - \phi^-$ on $\Sigma$ denotes a jump across $\Sigma$, and $[\cdot]_t$ the tangential part of a vector field.
$K^\pm, \ M^\pm \in \mathbb{R}^{3\times 3}$ are the effective permeability coefficients defined for $\alpha \in \{\pm\}$ as
\begin{equation} \label{Effective_coeff}
\begin{aligned}
K^\alpha_{ij} &:= \int_{Z_f} D_y(q_i^\alpha) : D_y(q_j^\alpha) \, dy, \hspace{0.4cm}i,j \in \{1,2\},\\
K^\alpha_{i3} := K^\alpha_{3i} &:= \int_{Z_f} D_y(q_i^\alpha) : D_y(q_3) \, dy,\hspace{0.4cm}i \in \{1,2\}, \\
K^\alpha_{33} &:= \frac{1}{2}\int_{Z_f} D_y(q_3) : D_y(q_3) \, dy, \\
M^+_{ij} &:= \int_{Z_f} D_y(q_j^+) : D_y(q_i^-) \, dy, \hspace{0.4cm}i,j \in \{1,2\},\\
M^-_{ij} &:= \int_{Z_f} D_y(q_j^-) : D_y(q_i^+) \, dy,\hspace{0.4cm}i,j \in \{1,2\}, \\
M^\alpha_{3i} &:= M^\alpha_{i3} := 0, \hspace{0.4cm}i \in \{1,2,3\}.
\end{aligned}
\end{equation}
Here, the tuple $(q_i^\pm,\pi_i^\pm)\in H^1_\#(Z_f,\Gamma)^3 \times L_0^2 (Z_f)$ for $i\in \{1,2\}$ is the unique weak solution of the Stokes cell problem
\begin{subequations}\label{prob:cellprob1-4}
\begin{align}
-\nabla_y \cdot D_y(q_i^\pm) + \nabla_y \pi_i^\pm &= 0
    &\inS{Z_f,}\label{eq:cellprob1-4_a}\\[0.7ex]
\nabla_y\cdot q_i^\pm &= 0 
    &\inS{Z_f,}\label{eq:cellprob1-4_b}\\[0.7ex] q_i^\pm &=0  
    &\onS{\Gamma \cup S^\mp,} \label{eq:cellprob1-4_c}\\[0.7ex]
q_i^\pm &= e_i 
    &\onS{S^\pm,}\label{eq:cellprob1-4_d}\\[0.7ex]  q_i^\pm, \ \pi_i^\pm &\text{ is } Y\text{-periodic, }\label{eq:cellprob1-4_e}
\end{align}
\end{subequations}
and $(q_3,\pi_3)\in H^1_\#(Z_f,\Gamma)^3 \times L_0^2 (Z_f)$ is the unique weak solution of
\begin{subequations}\label{prob:cellprob5}
\begin{align}
-\nabla_y \cdot D_y(q_3) + \nabla_y \pi_3 &= 0
    &\inS{Z_f,}\label{eq:cellprob5_a}\\[0.7ex]
\nabla_y \cdot q_3 &= 0 
    &\inS{Z_f,}\label{eq:cellprob5_b}\\[0.7ex] q_3 &=0  
    &\onS{\Gamma,}\label{eq:cellprob5_c}\\[0.7ex]
q_3 &= e_3  
    &\onS{S^+ \cup S^-,}\label{eq:cellprob5_d} \\[0.7ex]  q_3, \ \pi_3 &\text{ is } Y\text{-periodic.}\label{eq:cellprob5_e}
\end{align}
\end{subequations}
\begin{remark}
The transmission model \eqref{prob:macroscopic} together with homogeneous boundary conditions \eqref{eq:bound_cond_Dir_Neumann}  was derived in \cite{GahnNeussRadu2026} under the assumption $\partial_N \Omega \cap (\partial \Sigma \times (-\eps,\eps)) = \emptyset$.  Furthermore, a time-dependent version of problem~\eqref{prob:macroscopic} was derived before in \cite{NeussRaduGahn2025} for periodic boundary conditions at the lateral boundary of $\Omega$ and homogeneous boundary conditions for the normal stress at the bottom and top boundaries, more precisely,
\begin{subequations}
\begin{align}
       -[D(v^\pm) - p^\pm I]\nu &= 0 
    &\mbox{ on } \Gamma^\pm_\text{cap}, \label{eq:macro_NPer}\\[1ex]
    v^\pm, p^\pm \text{ is } &\Sigma\text{-periodic.} \label{eq:macro_h} 
\end{align}
\end{subequations}
\end{remark}

In the following we give the weak formulation of the transmission model \eqref{prob:macroscopic} together with boundary conditions \eqref{eq:bound_cond_Dir_Neumann} and prove well-posedness of this model. This result will then be used in Section~\ref{sec:macroscopicSimulations} for the analysis of the discretized model based on finite element approximations.
\\

\noindent\textbf{Assumptions on the data:} 
\smallskip
\begin{enumerate}[label = (A\arabic*)]
\item \label{ass:rhs_f_pm} For the bulk forces we assume $f_0^\pm \in L^2(\Omega^{\pm})$.

\item \label{ass:g_vD} For the boundary data, we assume that
$$
\left\{ \begin{array}{ll} 
v_D^\pm \in  H^1(\Omega^\pm),\,  v_D^\pm \mbox{ has compact } & \\
\mbox{ support in } \overline{\Omega^\pm}\setminus \Sigma,\,\, \nabla \cdot v_D^\pm =0, \mbox{ and } & \\
g^\pm \in L^2( \partial_N\Omega^\pm) & |\partial_N \Omega| >0\\
\\
v_D^\pm \in  H^1(\Omega^\pm), \, v_D^\pm \mbox{ has compact } & \\
\mbox{ support in } \overline{\Omega^\pm}\setminus \Sigma,\, \mbox{ and } \ \nabla \cdot v_D^\pm =0  & \partial_N \Omega = \emptyset.
\end{array}\right.
$$
\end{enumerate}
Let us define the function space
$$
X= H^+ \times H^-
$$
where
$$
    H^\pm = H^1(\Omega^\pm, \partial_D \Omega^\pm)^3.
$$
We will identify elements $v=(v^+,v^-) \in X$
with functions on $\Omega$ which are discontinuous
across $\Sigma$ and write $v^\pm = v|_{\Omega^\pm}$
for the restriction onto the subdomains $\Omega^+$ and $\Omega^-$.
Note that according to \eqref{Def_boundary_parts}, we have $\partial_D\Omega^\pm \subseteq (\partial \Omega^\pm \setminus \Sigma)$, whereby equality holds for $\partial_N\Omega^\pm = \emptyset$.
Further, we consider the subspace
\begin{equation*}
H = \left\{ \phi\in X: [\phi^+]_3 = [\phi^-]_3 
\,\mbox{on } \Sigma \right\}
= X \cap H^{\operatorname{div}}(\Omega)
\end{equation*} 
of functions with continuous normal component
on $\Sigma$ (and thus well defined weak divergence)
and the subspace
\begin{equation*}
V = \left\{ \phi \in H: \, \nabla \cdot \phi = 0 \mbox{ in } \Omega^+ \cup \Omega^- \right\}
    = \left\{ \phi \in H: \, \nabla \cdot \phi = 0 \mbox{ in } \Omega \right\}.
\end{equation*} 
of divergence free functions.
Finally, let
\begin{align}
\label{eq:pressure_space}
Q= \left\{ \begin{array}{ll} 
L_0^2(\Omega), & \partial_N \Omega = \emptyset\\
L^2(\Omega), & |\partial_N \Omega| >0.
\end{array}\right.
\end{align}
We call the tuple $(v,p)$ a weak solution of the transmission problem~\eqref{prob:macroscopic} together with boundary conditions \eqref{eq:bound_cond_Dir_Neumann}, if $(v-v_D,p) \in V \times Q$, and for all $\phi\in H$ it holds  that
\begin{eqnarray}
\label{eq:macro_weak_form}
&&\sum_{\pm} \left\{ \int_{\Omega^{\pm}} D(v^{\pm}) : D(\phi^{\pm}) dx -  \int_{\Omega^{\pm}} p^{\pm} \nabla \cdot \phi^{\pm} dx   \right\}  \nonumber
\\
&&+ \sum_{\pm} \int_{\Sigma} K^{\pm} v^{\pm} \cdot \phi^{\pm} d\sigma + \int_{\Sigma} M^{-}v^- \cdot \phi^+ + M^{+}v^+ \cdot \phi^- d\sigma \label{eq:weak_formul_eff_model} \\
&& = \sum_{\pm} \int_{\Omega^{\pm}} f^{\pm} \cdot \phi^{\pm} dx + \int_{\partial_N\Omega} g^\pm \cdot\phi^\pm d\sigma.\nonumber
\end{eqnarray}

While existence and uniqueness of solutions to~\eqref{eq:macro_weak_form}
could be shown using a limiting argument similar to \cite{NeussRaduGahn2025,GahnNeussRadu2026},
we will prove it manually here using classical linear existence theory.
To this end we define the bilinear and linear forms
\begin{align*}
    a^\pm &: H^\pm \times H^\pm \to \mathbb{R},&
    a^\pm(v^\pm, \phi^\pm)
        &= \int_{\Omega^\pm} D(v^\pm) : D(\phi^\pm) dx, \\
    a^\Sigma &: H \times H \to \mathbb{R},&
    a^\Sigma(v,\phi)
        &= \int_\Sigma \mathcal{K} v \cdot \phi d\sigma, \\
    a &: H \times H \to \mathbb{R},&
    a(v,\phi)
        &= a^+(v^+,\phi^+) + a^-(v^-,\phi^-) + a^\Sigma(v,\phi),\\
    b^\pm &: H^\pm \times L^2(\Omega^\pm) \to \mathbb{R},&
    b^\pm(v^\pm,p^\pm)
        &= -\int_{\Omega^\pm} p^\pm \nabla \cdot v^\pm dx,\\
    b &:H \times Q \to \mathbb{R}, &
    b(v,p)
        &= b^+(v^+,p^+) + b^-(v^-,p^-)
        = - \int_\Omega p \nabla \cdot v dx,\\
    \ell &: H \to \mathbb{R}, &
    \ell(\phi)
        &= \sum_{\pm} \int_{\Omega^{\pm}} f^{\pm} \cdot \phi^{\pm} dx + \int_{\partial_N\Omega} g^\pm \cdot\phi^\pm d\sigma
\end{align*}
where $\mathcal{K}v \cdot \phi$ has to be understood
as symbolic matrix-vector-vector product of the matrix
$$
    \mathcal{K} = \begin{pmatrix} K^+ & M^-\\M^+&K^-\end{pmatrix}
$$
and the vectors $v=(v^+,v^-)$ and $\phi=(\phi^+,\phi^-)$.
Using this notation we can rewrite~\eqref{eq:macro_weak_form}
as the following saddle point problem:
Find $(v-v_D, p) \in H \times Q$ such that
\begin{subequations}
\label{eq:weak_problem}
\begin{alignat}{3}
    &a(v,\phi)&{} + b(\phi,p) &= \ell(\phi)
        \qquad&\forall \phi \in H,\\
    &b(v,q)& &= 0
        & \forall q \in Q.
\end{alignat}
\end{subequations}

To show existence we make use of the following
elementary lemma.

\begin{lemma}
    \label{eq:infsup_subspaces}
    Let $X$ and $Y$ be inner product spaces and
    $b:X\times Y \to \mathbb{R}$ a bilinear form.
    Assume that $X_i \subset X$ and $Y_i \subset Y$, $i=1,2$ are subspaces such that $Y=Y_1 \oplus Y_2$ and the $X_i$ are biorthogonal
    to the $Y_i$ with respect to $b(\cdot,\cdot)$,
    i.e. for $i\neq j$ we have
    $b(v_i,p_j) = 0$ for $v_i \in X_i$ and $p_j \in Y_j$.
    Furthermore assume that an inf-sup-condition holds true
    on each pair $X_i \times Y_i$, i.e.,
    \begin{align*}
        \exists \beta_i >0 \,  \forall p_i \in Y_i: \qquad
        \sup_{\substack{v_i \in X_i,v_i \neq 0}}
        \frac{b(v_i, p_i)}{\|v_i\|} \geq \beta_i \|p_i\|.
    \end{align*}
    Then the inf-sup-condition also holds true on the pair
    $X \times Y$ with the constant
    $\beta = \frac{1}{2}\min\{\beta_1,\beta_2\}$.
\end{lemma}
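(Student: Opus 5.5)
The plan is to build, for a given $p\in Y$, a single test function $v\in X$ by combining near-optimal test functions from the two pairs $X_i\times Y_i$. Biorthogonality kills the cross terms, and only the triangle inequality and Cauchy--Schwarz are needed. No orthogonality of $Y_1$ and $Y_2$ is used, which is why a factor $\tfrac12$ appears.

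\emph{Step 1 (decomposition).} Fix $p\in Y$ with $p\neq 0$; the case $p=0$ is trivial. Write $p=p_1+p_2$ with $p_i\in Y_i$, using $Y=Y_1\oplus Y_2$. Fix a number $\theta\in(0,1)$.

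\emph{Step 2 (near-optimal test functions).} For each $i$ with $p_i\neq 0$, the assumed inf-sup condition on $X_i\times Y_i$ gives some $w_i\in X_i\setminus\{0\}$ with $b(w_i,p_i)\ge\theta\beta_i\|w_i\|\,\|p_i\|$. Rescale $w_i$ to obtain $v_i\in X_i$ with $\|v_i\|=\|p_i\|$ and
\begin{equation*}
b(v_i,p_i)\ge\theta\beta_i\|p_i\|^2 .
\end{equation*}
If $p_i=0$, set $v_i=0$; the same bound then holds trivially.

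\emph{Step 3 (combine).} Let $v=v_1+v_2\in X$ and $m=\min\{\beta_1,\beta_2\}$. Biorthogonality gives $b(v_1,p_2)=b(v_2,p_1)=0$, so
\begin{equation*}
b(v,p)=b(v_1,p_1)+b(v_2,p_2)\ge\theta m\bigl(\|p_1\|^2+\|p_2\|^2\bigr)\ge\tfrac{\theta m}{2}\bigl(\|p_1\|+\|p_2\|\bigr)^2 .
\end{equation*}
In particular $v\neq 0$, because $p\neq0$ forces some $p_i\neq0$ and hence $b(v,p)>0$.

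\emph{Step 4 (estimate the norms).} The triangle inequality gives $\|v\|\le\|v_1\|+\|v_2\|=\|p_1\|+\|p_2\|$. It also gives $\|p\|\le\|p_1\|+\|p_2\|$. Dividing the bound from Step 3 by $\|v\|$ yields
\begin{equation*}
\frac{b(v,p)}{\|v\|}\ge\frac{\theta m}{2}\bigl(\|p_1\|+\|p_2\|\bigr)\ge\frac{\theta m}{2}\|p\| .
\end{equation*}

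\emph{Step 5 (conclude).} The supremum over $X\setminus\{0\}$ is therefore at least $\tfrac{\theta m}{2}\|p\|$ for every $\theta<1$. Letting $\theta\to1$ gives the claimed constant $\beta=\tfrac12\min\{\beta_1,\beta_2\}$.

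The only mildly delicate points are that the supremum in the hypothesis need not be attained, which the factor $\theta$ handles, and the degenerate case $p_i=0$. Neither poses a real obstacle.
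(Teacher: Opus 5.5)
Your proof is correct, but it argues differently from the paper. The paper never combines test functions. It observes that the supremum over $X$ dominates the supremum over each subspace $X_i$, and that by biorthogonality $b(v_i,p)=b(v_i,p_i)$ for $v_i\in X_i$. This gives $\sup_{v\in X\setminus\{0\}} b(v,p)/\|v\| \ge \max_i \beta_i\|p_i\| \ge \min\{\beta_1,\beta_2\}\max_i\|p_i\|$, and then $\|p\|\le 2\max_i\|p_i\|$ finishes the proof.

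Because the paper only compares suprema, it needs no near-optimal witnesses, so no $\theta\to 1$ limit and no separate treatment of $p_i=0$. Its intermediate bound $\max_i\beta_i\|p_i\|$ is also pointwise at least as strong as yours. Indeed, $(\beta_1\|p_1\|^2+\beta_2\|p_2\|^2)/(\|p_1\|+\|p_2\|)$ is a weighted average of $\beta_1\|p_1\|$ and $\beta_2\|p_2\|$. The final constant is nevertheless the same.

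What your combined test function $v_1+v_2$ buys is flexibility under extra structure. Suppose in addition $X_1\perp X_2$ and $Y_1\perp Y_2$. Replacing your two triangle inequalities by Pythagoras gives $\|v\|=\|p\|$ and hence $b(v,p)/\|v\|\ge\theta\min\{\beta_1,\beta_2\}\|p\|$, i.e.\ the constant $\min\{\beta_1,\beta_2\}$ without the factor $\tfrac12$. That is exactly the situation of the paper's first application of the lemma, where $H_0^\pm$ and $Q^\pm$ are extended by zero. There the paper's max-argument only yields $\min\{\beta_1,\beta_2\}/\sqrt{2}$.
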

\begin{proof}
    Let $p \in Y$. Then we can write $p = p_1 + p_2$
    with $p_i \in Y_i$.
    Using the inf-sup-conditions for $(X_i,Y_i)$ we get
    \begin{align*}
        \sup_{\substack{v \in X,v \neq 0}}
            \frac{b(v, p)}{\|v\|}
        \geq \sup_{\substack{v_i \in X_i,v_i \neq 0}}
            \frac{b(v_i, p)}{\|v_i\|}
        = \sup_{\substack{v_i \in X_i,v_i \neq 0}}
            \frac{b(v_i, p_i)}{\|v_i\|}
        \geq \beta_i \|p_i\|
        \geq 2\beta \|p_i\|.
    \end{align*}
    Taking the maximum over $i$ and using
    $\|p\| \leq 2\max\{\|p_1\|, \|p_2\|\}$
    yields the assertion. 
\end{proof}

\begin{proposition}
\label{prop:weak_problem_existence}
Let $Z_s \neq \emptyset$. Then, there exists a unique weak solution $(v,p)$ to the transmission problem~\eqref{prob:macroscopic} together with boundary conditions \eqref{eq:bound_cond_Dir_Neumann}. 
\end{proposition}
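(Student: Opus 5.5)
We will apply the classical Babuška–Brezzi theory to the saddle point formulation \eqref{eq:weak_problem}. First the Dirichlet data are lifted: writing $v = u + v_D$ with $u\in H$, the problem becomes finding $(u,p)\in H\times Q$ with right-hand side $\ell(\phi)-a(v_D,\phi)$ and $-b(v_D,q)=0$, since $\nabla\cdot v_D=0$. Because $v_D^\pm$ vanishes near $\Sigma$, it lies in $H$ and $a^\Sigma(v_D,\cdot)=0$. Boundedness of $a$, $b$ and $\ell$ on $H$ follows from the trace theorem on $\Sigma$ and $\partial_N\Omega$. It then remains to show that $a$ is coercive on the kernel $V$ (in fact on all of $H$) and that $b$ satisfies an inf-sup condition on $H\times Q$.

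\textbf{Coercivity.} We would show that $\mathcal K$ is positive semidefinite as a quadratic form on $\R^6$. For $\xi=(\xi^+,\xi^-)$ we set $w=\sum_{i=1,2}(\xi^+_iq_i^++\xi^-_iq_i^-)+\tfrac12(\xi^+_3+\xi^-_3)q_3$, or an analogous combination, and compare $\|D_y(w)\|^2_{L^2(Z_f)}$ with $\mathcal K\xi\cdot\xi$ using \eqref{Effective_coeff}. The $i,j\le2$ blocks together with $M^\pm$ reproduce exactly the Gram matrix of $\{q_i^\pm\}$. The $K^\pm_{33}$ term, with its factor $\tfrac12$, and the mixed entries $K_{i3}$ need care, because normal continuity forces $\xi^+_3=\xi^-_3$ on $H$. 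Using this constraint, $\mathcal K v\cdot v$ should become $\|D_y(\sum_i v_i^+q_i^++v_i^-q_i^-+v_3q_3)\|^2\ge0$ pointwise on $\Sigma$. Hence $a(v,v)\ge\sum_\pm\|D(v^\pm)\|^2$. Korn's inequality on each $\Omega^\pm$ then finishes the argument, provided $|\partial_D\Omega^\pm|>0$. When $\partial_D\Omega^\pm$ is empty or too small (e.g.\ pure Neumann on one side), we control rigid motions through the interface term instead. This is where $Z_s\ne\emptyset$ enters: it makes the Gram matrix strictly positive definite, since the $q$'s vanish on $\Gamma\neq\emptyset$, so no nonzero combination can be a rigid motion with vanishing symmetric gradient. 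This gives $a(v,v)\ge c\sum_\pm(\|D(v^\pm)\|^2+\|v^\pm\|^2_{L^2(\Sigma)})$, and a Korn inequality with boundary term yields coercivity on $H$.

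\textbf{Inf-sup.} Here we use Lemma~\ref{eq:infsup_subspaces}. If $|\partial_N\Omega|>0$, we take $Y_i=L^2(\Omega^\pm)$ and $X_i=H^1_0(\Omega^\pm)^3$ extended by zero. These subspaces lie in $H$ because their normal traces vanish on $\Sigma$, and they are biorthogonal. The inf-sup conditions on the pairs $H^1_0(\Omega^\pm)^3\times L^2_0(\Omega^\pm)$ are classical. The constants on each subdomain, however, need test functions with nonzero flux, and these are handled via $\partial_N\Omega^\pm$ if it is nonempty, or through $\Sigma$ by a normal-continuous field otherwise. The cleaner choice is therefore $Y_1=L^2_0(\Omega^+)\oplus L^2_0(\Omega^-)$ paired with $H^1_0$ fields, and $Y_2=$ piecewise constants (restricted to mean zero if $\partial_N\Omega=\emptyset$), paired with a finite-dimensional space of smooth fields $\psi\in H$ whose divergence has nonzero integral on the appropriate subdomains. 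Biorthogonality requires $b(\psi,p_1)=0$ for mean-free $p_1$, so we choose each $\psi$ with $\nabla\cdot\psi$ constant on each $\Omega^\pm$. Such a $\psi$ is obtained by solving a divergence problem with prescribed flux on $\partial_N\Omega$ or across $\Sigma$ (Bogovskii). A finite-dimensional inf-sup holds trivially once $b$ is nondegenerate on it.

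The main obstacle is the coercivity step, specifically verifying the sign of $\mathcal K$ with the factor $\tfrac12$ in $K_{33}$ and controlling rigid motions when a subdomain has no Dirichlet boundary. We would attack it first by explicitly expanding $\|D_y(w)\|^2$ and using $[v^+]_3=[v^-]_3$.
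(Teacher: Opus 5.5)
Your proposal is correct and has the same skeleton as the paper's proof: Brezzi theory after lifting $v_D$; coercivity of $a$ on all of $H$ from definiteness of $\mathcal{K}$ on $\{\xi_3^+=\xi_3^-\}$ plus Korn's second inequality with a trace term on $\Sigma$; and inf-sup via Lemma~\ref{eq:infsup_subspaces} with subdomain pairs and a complement of piecewise constant pressures. Two sub-steps are done differently.

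First, the paper simply cites \cite[Lemma 5]{NeussRaduGahn2025} for \eqref{ineq:coercivity_macro_fluid}, whereas you prove it. For $\xi_3^+=\xi_3^-=\xi_3$ and $w_\xi=\sum_{i\le 2}(\xi_i^+q_i^++\xi_i^-q_i^-)+\xi_3 q_3$ one has exactly $\mathcal{K}\xi\cdot\xi=\|D_y(w_\xi)\|^2_{L^2(Z_f)}$. The two halves $K^\pm_{33}$ add up to $\|D_y(q_3)\|^2$, and the entries $K^\pm_{i3}$, $M^\pm$ produce the cross terms. Since $w_\xi=\xi^\pm$ on $S^\pm$ and $w_\xi=0$ on $\Gamma$, $D_y(w_\xi)=0$ forces $w_\xi$ to be a rigid motion vanishing on $\Gamma$, hence $\xi=0$, and compactness of the unit sphere gives $c_0$. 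This makes the argument self-contained and shows exactly why the factor $\tfrac12$, normal continuity, and $Z_s\neq\emptyset$ are needed. For $Z_s=\emptyset$ and $\xi^+=\xi^-=\xi_0$ one gets $w_\xi\equiv\xi_0$, which is Remark~\ref{rem:well-posedness}.

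Second, in the inf-sup step you always pair $L^2_0(\Omega^\pm)$ with $H^1_0(\Omega^\pm)^3$. All piecewise constants (one- or two-dimensional) go into the finite-dimensional complement, paired with Bogovskii-type fields of piecewise constant divergence that carry flux through $\partial_N\Omega$ or $\Sigma$. The paper instead chooses $Q^\pm=L^2(\Omega^\pm)$ or $L^2_0(\Omega^\pm)$ according to $|\partial_N\Omega^\pm|$, paired with $H^1(\Omega^\pm,\partial_D\Omega^\pm\cup\Sigma)^3$. It therefore invokes the mixed-boundary inf-sup but needs at most the one-dimensional complement $\operatorname{span}\{\hat p\}$. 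Your splitting uses only the most classical inf-sup and treats all boundary configurations uniformly, at the cost of constructing up to two flux-carrying fields explicitly.

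There are two minor slips. First, $v_D$ is not in $H$, since it does not vanish on $\partial_D\Omega^\pm$. You only use that its normal trace on $\Sigma$ vanishes, so that $a(v_D,\cdot)$ is bounded on $H$ and $a^\Sigma(v_D,\cdot)=0$. Second, the identity does not yield semidefiniteness of $\mathcal{K}$ on all of $\mathbb{R}^6$, because off the constraint the $K_{33}$ and $K_{i3}$ terms no longer assemble into a square. Only the constrained five-dimensional subspace matters, as you eventually use.
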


\begin{proof}
We first show coercivity of $a(\cdot,\cdot)$ on $H$.
To this end we note that, following~\cite[Lemma 5]{NeussRaduGahn2025},
there exists a constant $c_0>0$ such that for every
$\xi^{\pm} \in \R^3$ with $\xi_3^+ = \xi_3^-$ it holds that \begin{align}\label{ineq:coercivity_macro_fluid}
\mathcal{K} \xi \cdot \xi =
M^+\xi^+ \cdot \xi^- + M^- \xi^- \cdot \xi^+ + \sum_{\pm} K^{\pm} \xi^{\pm} \cdot \xi^{\pm} \geq c_0 \sum_{\pm} |\xi^{\pm}|^2,
\end{align}
i.e., the symmetric matrix $\mathcal{K}$ is positive definite
on the corresponding 5-dimensional subspace.
As a consequence, we get
$$
    a(v,v)
        \geq \sum_\pm \int_{\Omega^\pm} D(v^\pm)^2 dx
            + c_0 \int_\Sigma |v^\pm|^2 d\sigma
        \geq C \sum_\pm \|v^\pm\|^2_{H^1(\Omega^\pm)} = C \|v\|_H^2
$$
with $C>0$, where the second inequality follows from 
Korn's second inequality (see, e.g.,
\cite[Proposition 2] {Graeser2015Poincare} and \cite{MosMja71}.) 

To show the inf-sup (or Ladyzhenskaya–Babuška–Brezzi)
condition for the bilinear form
$b(\cdot,\cdot)$ on $H \times Q$
it will be helpful to consider the closed subspaces
\begin{align*}
    H^\pm_0
        &= \{v^\pm\in H^\pm\,:\, v^\pm = 0 \text{ on }\Sigma\}
        = H^1(\Omega^\pm, \partial_D\Omega^\pm\cup\Sigma)^3
        \subset H^\pm,\\   
    Q^\pm  
        &= \begin{cases}
            L^2(\Omega^\pm) &\text{ if }|\partial_N \Omega^\pm|>0,\\
            L^2_0(\Omega^\pm) &\text{ else}
            \end{cases}
        \subset L^2(\Omega^\pm).
\end{align*}
It is well-known, that the subdomain bilinear forms
$b^\pm(\cdot,\cdot)$ satisfy the inf-sup
conditions on $H^\pm_0 \times Q^\pm$, respectively,
i.e., 
$$
    \exists \beta^\pm>0\,
    \forall p^\pm \in Q^\pm:
    \qquad
    \sup_{\substack{v^\pm \in H^\pm_0\\v^\pm \neq 0}}
    \frac{b^\pm(v^\pm, p^\pm)}{\|v\|_{H^1(\Omega^\pm)}}
    \geq \beta^\pm \|p^\pm\|_{L^2(\Omega^\pm)}.
$$
Extending functions from $H^\pm_0$ and $Q^\pm$ by zero
to the other subdomain, we can view
these spaces as subspaces of $(H^+_0 \times H^-_0)$
and $(Q^+ \times Q^-)$, respectively.
Using Lemma~\ref{eq:infsup_subspaces} with $i\in \{+,-\}$
then yields an inf-sup condition on
$(H^+_0 \times H^-_0) \times (Q^+ \times Q^-)$
$$
    \exists \beta>0\,
    \forall p \in Q^+\times Q^-:
    \qquad
        \sup_{\substack{v \in H^+_0 \times H^-_0\\v \neq 0}}
        \frac{b(v, p)}{\|v\|_X}
    \geq \beta \|p\|_{L^2(\Omega)}.
$$
If both
$|\partial_N\Omega^+| >0$ and  $|\partial_N\Omega^-| >0$ hold true,
then this already implies the desired
condition on $H \times Q$, because $H\supset H^+_0 \times H^-_0$
and $Q=Q^+ \times Q^-$.

For the remaining cases where $|\partial_N\Omega^\pm|=0$
for at least one subdomain we will again use
Lemma~\ref{eq:infsup_subspaces}, now with $Y_1 = Q^+ \times Q^-$,
$X_1 = H_0^+ \times H_0^-$,
$Y_2=Y_1^\perp = \operatorname{span} \{\hat{p}\}$,
and $X_2=\operatorname{span} \{\hat{v}\}$
for suitable $\hat{p} = (\hat{p}^+, \hat{p}^-)\in Q$
and $\hat{v} = (\hat{v}^+, \hat{v}^-)\in H$.
In all cases bi-orthogonality is straight forward and we will
only show $b(\hat{v},\hat{p}) \neq 0$ which is equivalent
to the inf-sup-condition on $X_2 \times Y_2$.

If $|\partial_N\Omega^+| = |\partial_N\Omega^-| = 0$
we use $\hat{p}^+ \equiv 1$ and $\hat{p}^- \equiv -1$.
Since $\operatorname{div} : H^+ \to L^2(\Omega^+)$
is known to be surjective, there is a $\hat{v}^+ \in H^+$
with $\operatorname{div} \hat{v}^+ = 1$. Furthermore we can construct
a $\hat{v}^- \in H^-$ such that $\operatorname{div} \hat{v}^- = -1$
and $[\hat{v}^+]_3 = [\hat{v}^-]_3$ on $\Sigma$.
Then we easily compute  $b(\hat{v}, \hat{p}) = - |\Omega| \neq 0$.

Now let, w.l.o.g., $|\partial_N\Omega^+| = 0$ but
$|\partial_N\Omega^-| > 0$.
Then define $\hat{p}$ and $\hat{v}$ as above with the
exception that we now set $\hat{p}^- \equiv 0$ and
use a $\hat{v}^-$ with $\operatorname{div} \hat{v}^- = 0$
which is possible because $H^-_0 \neq H_0^1(\Omega^-)$.
Then we have $b(\hat{v}, \hat{p}) = - |\Omega^+| \neq 0$.
\end{proof}

\begin{remark}
  We could alternatively derive the inf-sup condition on
  $H \times Q$ in a more direct way from the classical inf-sup condition on
  the subspace
  $H^1(\Omega, \partial_D\Omega) \times Q$
  where tangential velocities are continuous, too.
  However, we opted for the given proof
  because the same construction will be
  used to show inf-sup stability for
  a finite element discretization later on.
\end{remark}

\begin{remark}\label{rem:well-posedness}
For the case $Z_s = \emptyset$ inequality \eqref{ineq:coercivity_macro_fluid}  does not hold. To see this, we use the effective coefficients computed in \eqref{eq:coeff_empty} and take $\xi^+ =\xi^- = \xi_0$, where $\xi_0 \in \R^3$ is a constant vector, to obtain
$$
M^+\xi^+ \cdot \xi^- + M^- \xi^- \cdot \xi^+ + \sum_{\pm} K^{\pm} \xi^{\pm} \cdot \xi^{\pm} = 0.
$$
Thus, the result of the proposition holds for boundary conditions \eqref{eq:bound_cond_Dir_Neumann} except for the pure Neumann ones. In the latter case, a compatibility condition for the data has to be required to ensure that the problem is well-posed.
\end{remark}

\begin{remark}
    The result of Proposition~\ref{prop:weak_problem_existence}
    can be directly generalized to the case of periodic
    boundary conditions with periodicity in either
    $x_1$- or $x_2$-direction or in both of these directions.
    In this case the pressure space $Q$ is still given
    as in \eqref{eq:pressure_space} depending on
    $\partial_N \Omega$.
\end{remark}

\section{Numerical computation of cell solutions and effective coefficients} \label{sec:numerical_cellSolCoeff}

The aim of this section is to compute numerically the effective coefficients $K^\pm, \ M^\pm\in\mathbb{R}^{3\times 3}$ entering the interface conditions \eqref{eq:macro_Tnormal}-\eqref{eq:macro_Ttangential} in the transmission problem  \eqref{prob:macroscopic}. 

Let us first mention that for the case when $Z_s=\emptyset$,  the solutions to the cell problems \eqref{prob:cellprob1-4} and \eqref{prob:cellprob5} and the effective coefficients $K^\pm$ and $M^\pm$ can be calculated explicitly. Namely, the cell solutions are given by
\begin{subequations}
\begin{align}\label{eq:cell_soll_empty}
q^\pm_i(y) &= \left(\pm\frac{1}{2}y_3 + \frac{1}{2}\right) e_i, &\pi_i^\pm(y) &= 0, \quad i \in \{1,2\},\\
q_3(y) &= e_3, &\pi_3(y) &= 0.
\end{align}
\end{subequations}
These yield the effective coefficients 
\begin{align}\label{eq:coeff_empty}
K^\pm = \text{diag}\left(\frac{1}{4},\frac{1}{4},0 \right) \quad \text{ and } \quad M^\pm = \text{diag}\left(-\frac{1}{4},-\frac{1}{4},0 \right).
\end{align}

For the situation when $Z_s\neq \emptyset$, it turns out, that the main difficulty in the numerical calculation of the coefficients lies in the computation of the solutions  $(q_i^\pm,\pi_i^\pm),\, i\in \{1,2\}$, and $(q_3,\pi_3)$ of the Stokes cell problems \eqref{prob:cellprob1-4} and \eqref{prob:cellprob5}, respectively. This is due to the fact that the computational domain $Z_f$ (the fluid part of the periodicity cell) is three-dimensional and complex, leading to challenges in the construction of finite element meshes and to high computational cost. A lower-dimensional reduction is not feasible, as the fluid region of the reference cell must remain connected. Moreover, we need to deal with periodic boundary conditions, which cause considerable effort to construct a suitable mesh and to effectively enforce the periodicity to both velocity and pressure.

We perform the numerical computations for two types of microstructure, a symmetric one, where the solid phase $Z_s$ consists of the union of cylinders with rotational axes parallel to the three coordinate axes, and an asymmetric one, which contains an inclined segment, see \cref{fig:PeriodicityCell-sym_asym}(b) and (c). For the symmetric cell geometry, we denote by $r$ the common radius of the cylinders, and by $h$ the height of the cylinder whose rotational axis is parallel to the $y_3$-axis.
\begin{figure}[H]
    \centering
    \subfloat[]{
        \includegraphics[height=3.2cm]{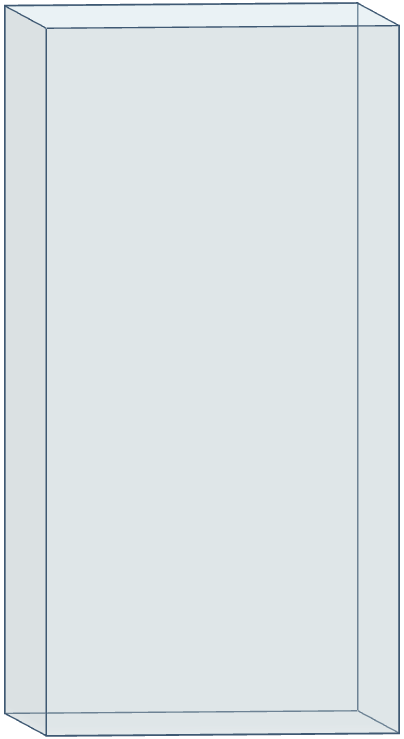}}
        \hspace{2cm}
    \subfloat[]{
        \includegraphics[height=3.2cm]{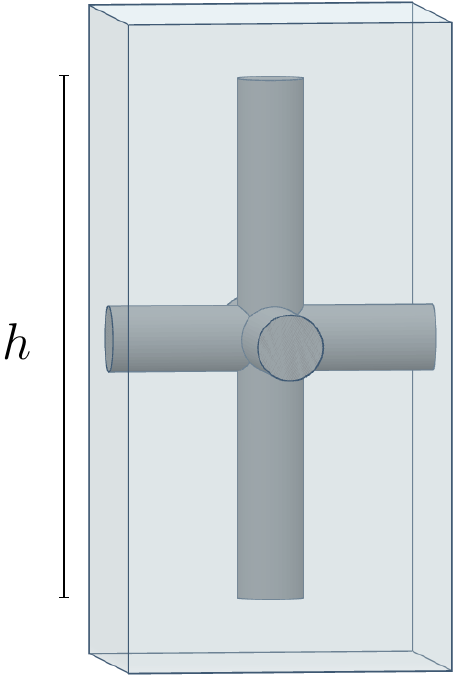}}
        \hspace{2cm}
    \subfloat[]{
        \includegraphics[height=3.2cm]{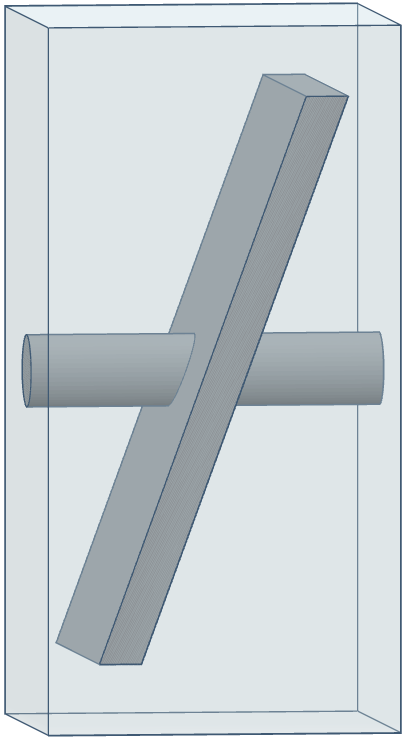}}
\caption{Examples of periodicity cells $Z$: (a) without a solid phase, (b) with symmetric geometry involving the cylindrical cross, and (c) with asymmetric geometry containing an inclined segment.} 
\label{fig:PeriodicityCell-sym_asym}
\end{figure}%

\subsection{Mesh generation and numerical environment}\label{subsec:meshGen}
The construction of the computational domain $Z_f$, as well as mesh generation and refinement are done in \texttt{gmsh}-4.13.1, see \cite{gmsh}. We use set operations in \texttt{gmsh} to construct the complex three-dimensional geometries from basic geometric entities. E.g., the cross-shaped solid phase $Z_s$ of the symmetric periodicity cell is obtained via Boolean union of cylinders (\cref{fig:PeriodicityCell-sym_asym}) or cuboids (\Cref{fig:cuboidMeshes}). The fluid part $Z_f$ is the Boolean difference between the box $Z=[0,1]^2\times[-1,1]$ and the solid phase $Z_s$. To obtain finer meshes, a first attempt was to use the refine-by-splitting strategy of \texttt{gmsh}. However, this proved unfeasible for the cylindrical, cross-shaped domain; \texttt{gmsh} issued an error message stating that, for very fine meshes, it was not possible to maintain the periodicity on opposing faces. Thus, for mesh refinement, we manipulate the \texttt{gmsh} parameter \textit{target mesh size} (\texttt{lc}) that allows for local control of element sizes. Hence, a new grid is generated for each specified value of \texttt{lc} that meets the specified fineness conditions, see \cref{fig:cylinderMeshes}.

To impose periodic boundary conditions on the perforated lateral walls, it is essential that each node on one periodic boundary has a corresponding node on the opposite periodic boundary. This way, the degrees of freedom associated with these nodes can be properly paired. In \texttt{gmsh}, the periodic mesh generation on opposite surfaces is created by translating the mesh of one surface on the opposite one, using the \texttt{Periodic Surface} functionality as follows.
\begin{lstlisting}[language=C++]
Periodic Surface {74} = {72} Translate {1, 0, 0}; 
Periodic Surface {75} = {73} Translate {0, 1, 0}; 
\end{lstlisting}
Here, the numbers $72,74$ and $73,75$ are physical surface tags used in \texttt{gmsh} to identify the opposite surfaces on which we impose periodicity in the $y_1,y_2$ directions.

The algorithm for the computation of the Stokes problems and of the effective coefficients is implemented using the \textit{Distributed and Unified Numerical Environment (DUNE)} \cite{DUNE:2021,DUNE1,EngwerEtAl2025}, which is a modular \texttt{C++} library for grid based methods for partial differential equations.
The Stokes cell problems are discretized using lowest-order Taylor--Hood finite elements via the discretization module dune-fufem, see \cite{fufem}. In particular, we employ the function \texttt{computePeriodicConstraints()} to enforce periodicity on a pair of boundary meshes defined on two surfaces named primary boundary and secondary boundary. The function will first search matching pairs of boundary faces from the primary and secondary boundaries. A pair of faces is considered of match if the center of the secondary boundary face mapped to the primary boundary coincides (up to a tolerance) with the center of the primary boundary face.
For any matching pair, the secondary basis DOFs associated to the secondary boundary face are then interpolated from the primary basis DOFs on the primary boundary face. The secondary boundary face DOFs are marked as constrained, and the interpolation weights are used as constraint weights. Thus, in order to use this function, the importance of the \texttt{Periodic Surface} functionality in \texttt{gmsh} becomes clear.
\par
To solve the arising large linear systems, we use the MINRES solver in combination with a block-diagonal preconditioner. The velocity block uses an AMG approximation of the inverse of the velocity stiffness matrix, and for the inverse Schur complement, an AMG approximation of the inverse pressure mass matrix $M_p$ is deployed. We use Dirichlet boundary conditions for the pressure mass matrix on the periodic boundaries, i.e., we modify rows and columns of $M_p$ as if the periodicity DOFs were associated with Dirichlet boundary conditions. However, we emphasize that this is a purely algebraic manipulation of $M_p$ and no Dirichlet boundary conditions are imposed on the pressure. In our tests in the case of symmetry for all considered values of $h$ and $r=0.1$, this modification leads to a reduction in the required MINRES iterations of $14\% $ with similar time per iteration for all cell problems. In general, we observe an increase in MINRES iterations as values of height $h$ and radius $r$ increase.
\begin{figure}[H]
    \centering
    \subfloat[]{
        \includegraphics[width=0.305\linewidth]{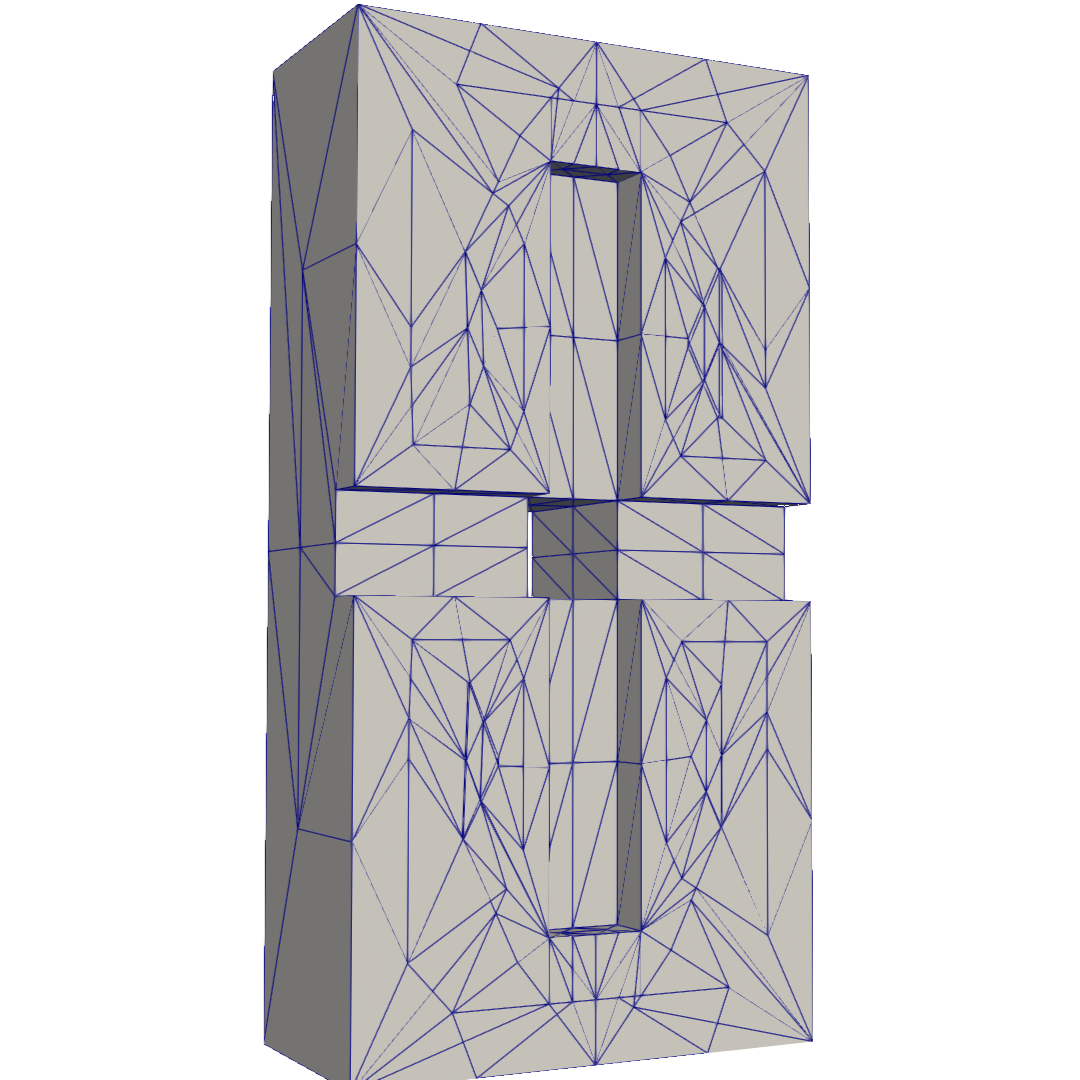}}\hfill
    \subfloat[]{
        \includegraphics[width=0.305\linewidth]{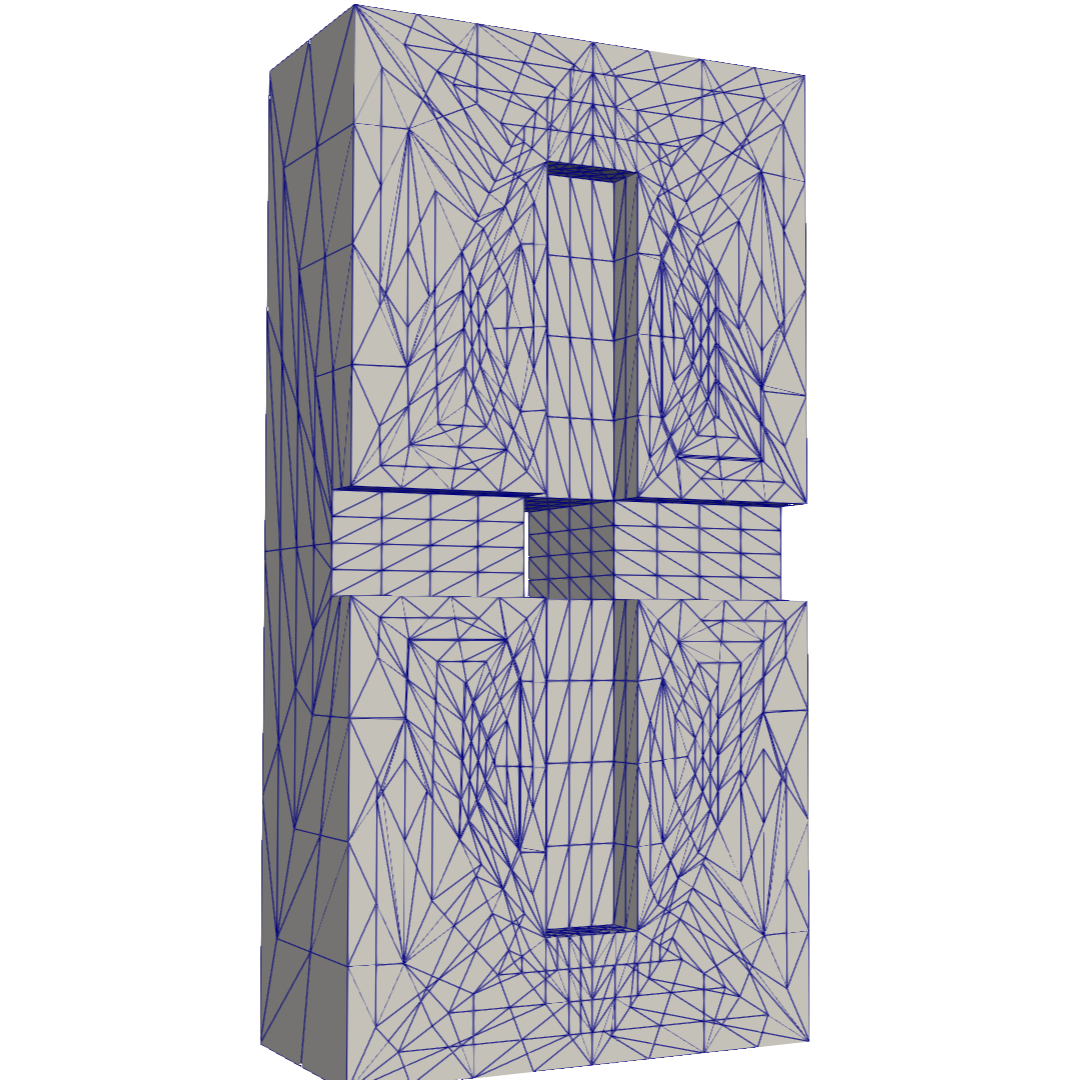}}\hfill
    \subfloat[]{
        \includegraphics[width=0.305\linewidth]{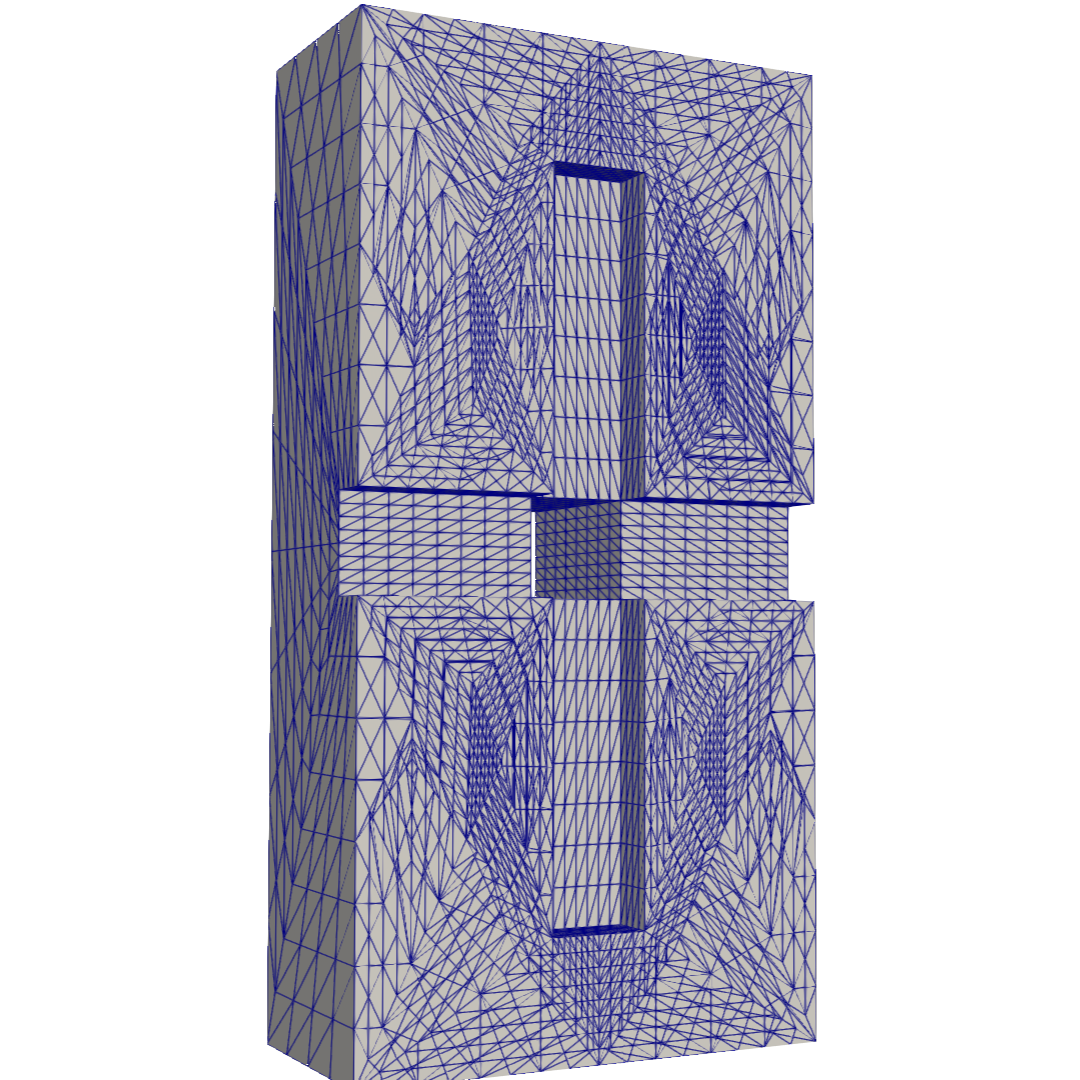}}
\caption{
Sectional views of successive mesh refinements levels $k$: (a) $k=0$, (b) $k=1$, (c) $k=2$.} 
\label{fig:cuboidMeshes}
\end{figure}%

\subsection{Numerical convergence study}\label{subsec:numStud}
In this section, we perform a mesh convergence study for selected effective coefficients. As we mentioned in the paragraph before, in \texttt{gmsh} it is not possible to generate uniformly refined meshes for the cylindrical cross-shaped domain using a refine-by-splitting strategy. Therefore, to validate our implementation, we consider a geometry including a cross-shaped domain built from cuboids instead of cylinders. In this case, \texttt{gmsh}'s refine-by-splitting allows us to obtain successively refined meshes, see \cref{fig:cuboidMeshes}, and to estimate the orders of convergence. 
In the following, let $V_k$ be the volume of the largest cell of the three-dimensional mesh at the refinement level $k\in \mathbb{N}_0$, and denote the mesh size at this refinement level by $\displaystyle h_k\coloneqq\sqrt[3]{V_k}$. As a readout, we consider the coefficients $K_{11}^+$ and $K_{33}^+$, see Problem \eqref{prob:macroscopic}. Their approximations at refinement level $k$ are denoted by $K^+_{11,k}$ and $K^+_{33,k}$, respectively.
The error at the $k$-th level is defined by $e_{k}^* \coloneqq | *_{,k-1} - *_{,k}|$ for $*\in\{K_{11}^+,K_{33}^+\}$ and $k\geq 1$. 
Taking into account that we are using lowest order Taylor--Hood finite elements and that $K_{11}^+$ and $K_{33}^+$ are defined in terms of the velocity gradient, we can expect at most quadratic convergence. A linear regression in \cref{fig:error_cuboid} confirms that we indeed obtain a global convergence order of $\alpha \approx 2$.
\begin{figure}[H]
    \centering
        \includegraphics[width=0.43\linewidth]{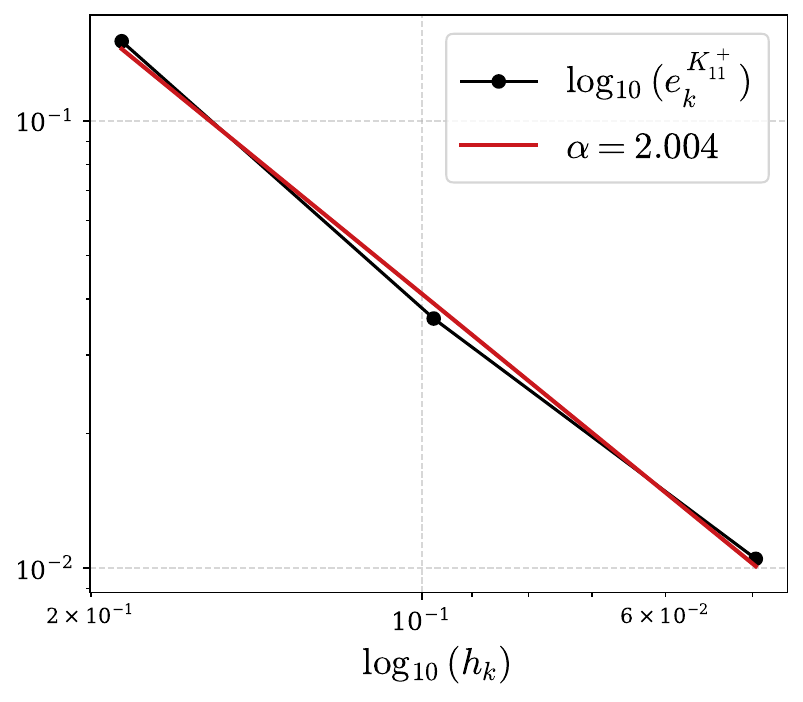}\hfill
        \includegraphics[width=0.43\linewidth]{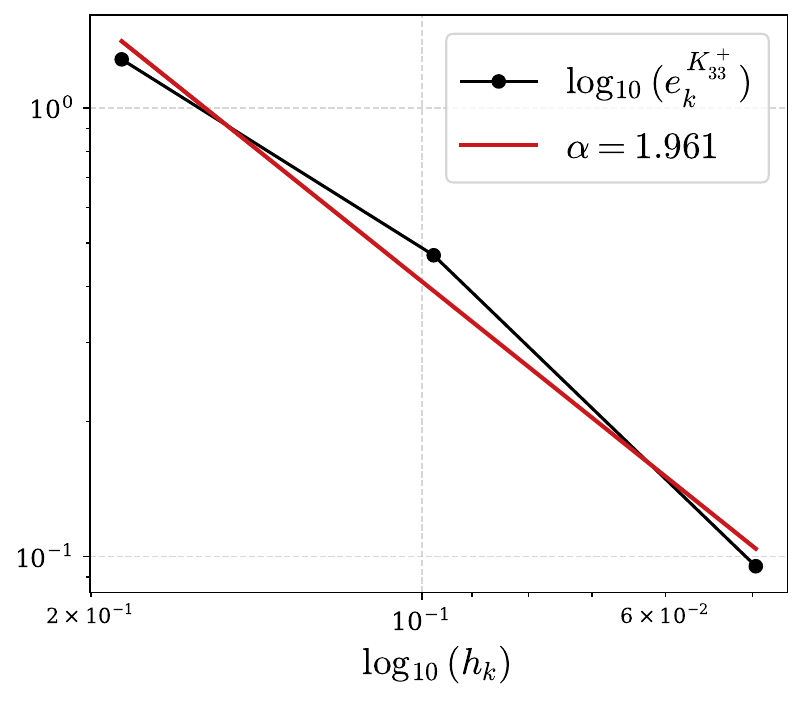}
    \caption{Plot of the errors against decreasing values of $h_k$, for the coefficient $K_{11,k}^+$ (left) and coefficient $K_{33,k}^+$ (right), shown in log–log scales.}
    \label{fig:error_cuboid}
\end{figure}
\subsection{Computation of cell solutions. Sensitivity study with respect to geometric parameters}\label{subsec:cellsolutions}
In this section, we compute the solutions $q_i^\pm, \, i\in \{1,2\}$, and $q_3$ of the Stokes cell problems \eqref{prob:cellprob1-4} and \eqref{prob:cellprob5}, respectively, for cell geometries given in \cref{fig:PeriodicityCell-sym_asym}(b) and (c). For the case (b), when the microscopic geometry is generated by the cylindrical cross, we analyze their sensitivity with respect to geometric parameters. 

For the simulation of the cell solutions, the mesh size is set to $h_k \approx 0.05$. The corresponding mesh for $h=1.5$ and $r = 0.1$ is illustrated in \cref{fig:cylinderMeshes}. We use ParaView, see \cite{Paraview}, to visualize the simulation results. 

\begin{figure}[H]
\begin{minipage}[t]{0.55\textwidth}
    \vspace{0pt}    
    \centering
    \includegraphics[width=0.605\linewidth]{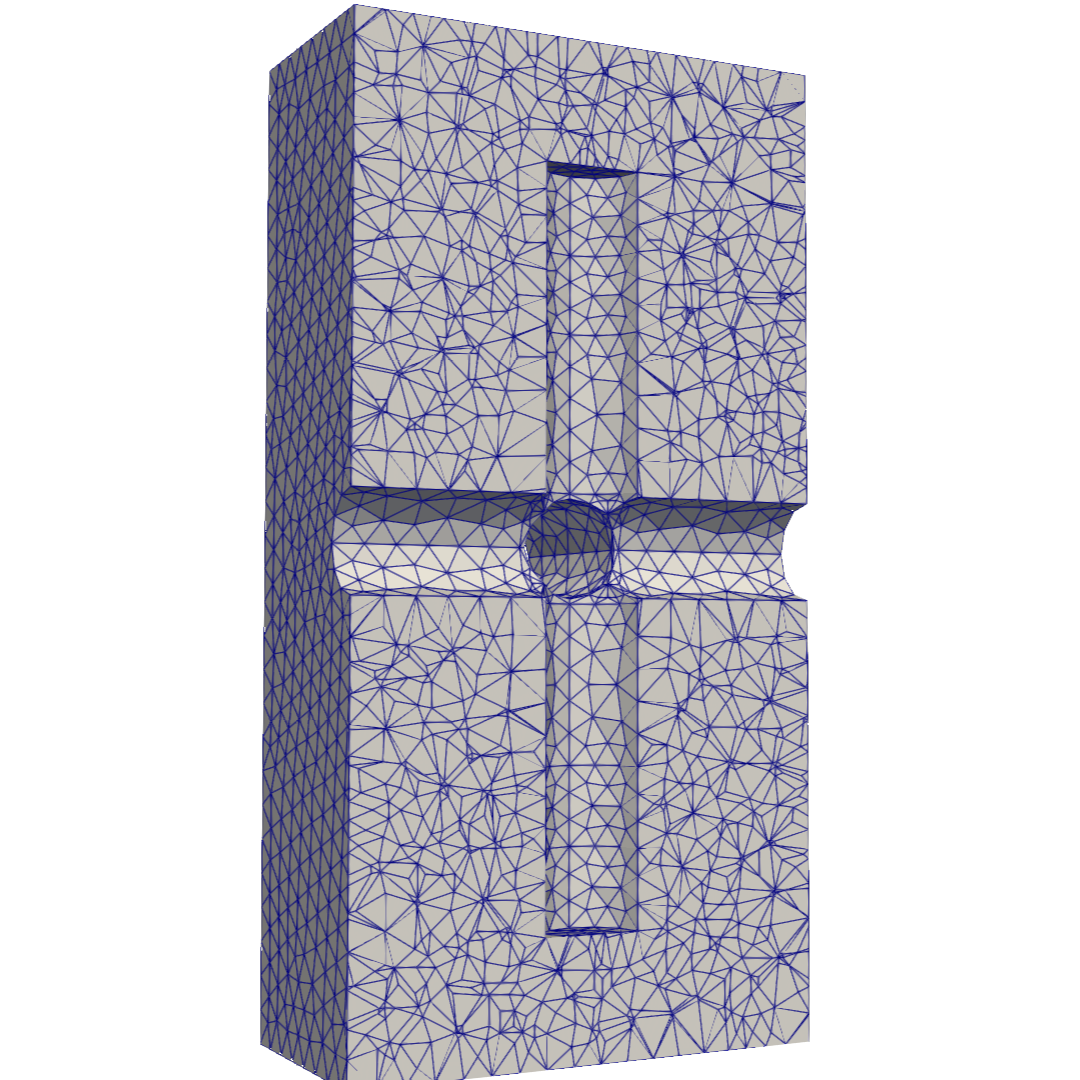}
\end{minipage}\hfill%
\begin{minipage}[t]{0.4\textwidth}
    \vspace{0pt}
    \caption{Mesh of size $h_k \approx 0.05$ for the fluid subdomain $Z_f$ of the standard cell $Z$, which was generated by a cylindrical cross with $h=1.5$ and $r = 0.1$.}
    \label{fig:cylinderMeshes}
\end{minipage}
\end{figure}
\begin{figure}[H]
    \begin{minipage}[t]{0.8\textwidth}
    \vspace{0pt}
    \centering
       \subfloat[]{
        \includegraphics[width=0.305\linewidth]{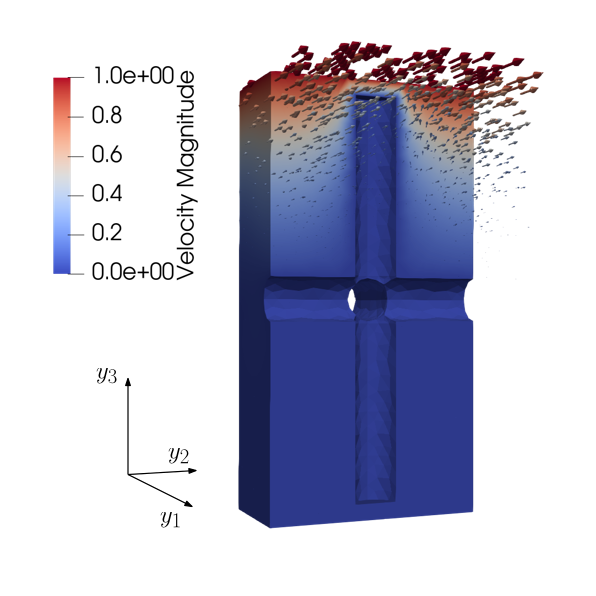}}
    \subfloat[]{
        \includegraphics[width=0.305\linewidth]{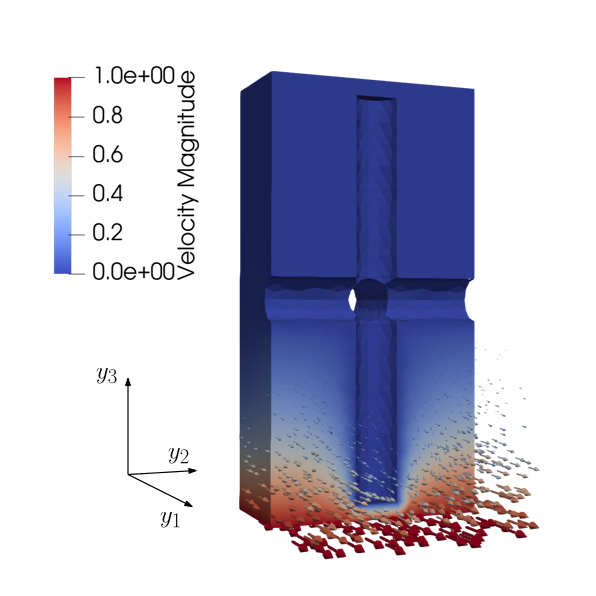}}
    \subfloat[]{
        \includegraphics[width=0.305\linewidth]{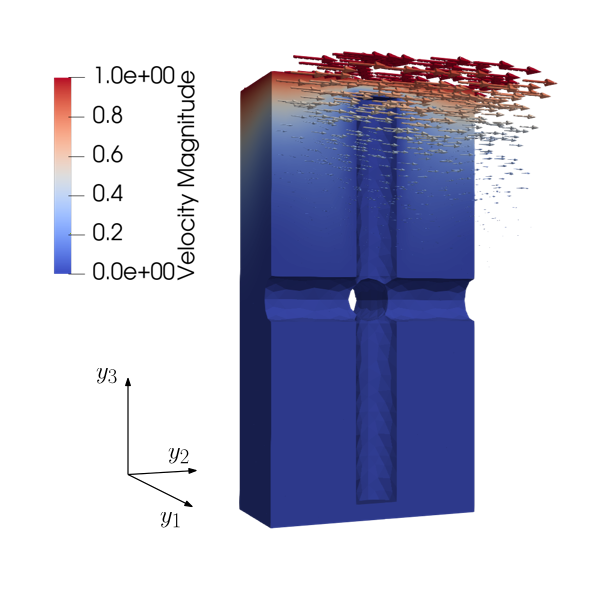}}\\
    \subfloat[]{
        \includegraphics[width=0.305\linewidth]{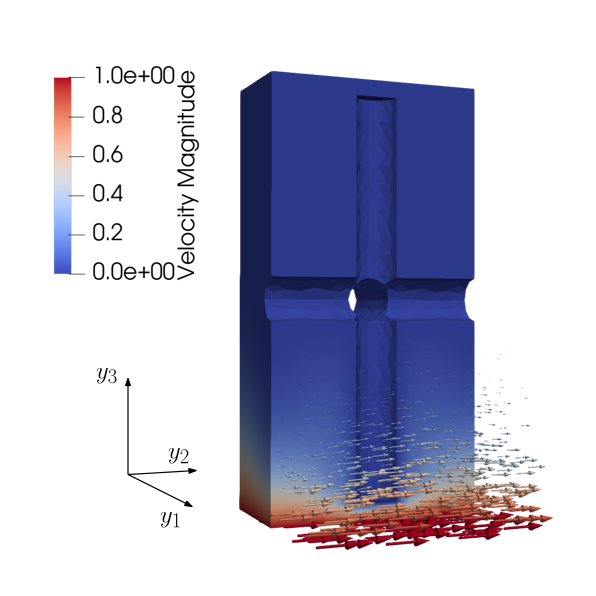}}
    \subfloat[]{
        \includegraphics[width=0.305\linewidth]{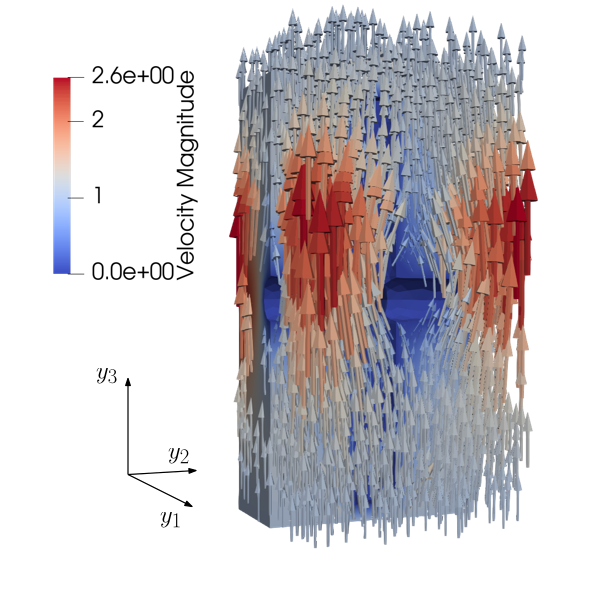}}
\end{minipage}\hfill%
\begin{minipage}[t]{0.2\textwidth}
    \vspace{10pt} 
    \caption{Sectional views of solutions $q_i^\pm, \,i\in \{1,2\}$ in (a-d) and $q_3$ in (e) to Stokes cell problems \eqref{prob:cellprob1-4} and \eqref{prob:cellprob5}, respectively, for $h = 1.85$ and $r = 0.1$, in case of the symmetric cell geometry.}
    \label{fig:5cellProblems}
\end{minipage}
\end{figure}
In \cref{fig:5cellProblems}, cell solutions computed for $h=1.85$ and $r = 0.1$ are provided. It is clearly visible that the periodic and Dirichlet boundary conditions are enforced correctly. In subfigures (a-d), the velocities $q_i^\pm$ for $i\in \{1,2\}$ develop sharp gradients in regions where different values for the Dirichlet boundary conditions are imposed on nearby boundaries close to the top or bottom of the cell. For the velocity $q_3$ in subfigure (e), the higher gradients occur in regions where the given flux is forced through the narrow lateral regions that result from the cross geometry.

As previously mentioned, we are interested in observing the sensitivity of the cell solutions to variations in the geometry of the reference cell. We investigate this for the case of the symmetric cell geometry generated by the cylindrical cross. As can also be observed in \cref{fig:5cellProblems}, in this case, the symmetry properties of the geometry and the particular boundary conditions yield the following relations between the cell solutions $q_1^\pm$ and $q_2^\pm$. Let $R\in \R^{3\times 3}$ be the rotation such that $Re_1 = e_2$, $Re_2 = -e_1$ and $Re_3 = e_3$. Then for $y\in Z_f$, it holds
\begin{align} \label{eq:rel_cell_sol_1}
    R^T q_1^{\pm} (Ry)= -q_2^\pm(y) \quad  \mbox{ and } \quad R^T q_2^{\pm} (Ry) = q_1^\pm (y).
\end{align}
To show \eqref{eq:rel_cell_sol_1}, let us define 
\begin{align*}
    \tilde{q}_i^{\pm}(y):= R^T q_i^{\pm} (Ry), \qquad \tilde{\pi}_i^{\pm}(y):= \pi_i^{\pm}(Ry)
\end{align*}
for $i=1,2$. An elemental calculation shows
\begin{align*}
   \nabla_y \cdot D_y(\tilde{q}_i^{\pm}) = R^T (\nabla_y \cdot D_y(q_i^{\pm}))(Ry), \qquad \nabla_y \tilde{\pi}_i^{\pm}= R^T (\nabla_y \pi_i^{\pm})(Ry).
\end{align*}
Further, we have the boundary conditions
\begin{align*}
    \tilde{q}_1^{\pm} (y) = R^T e_1 = -e_2, \quad \mbox{ for } y \in S^\pm,
\end{align*}
and similar $\tilde{q}_2^{\pm}(y) = e_1$ on $S^\pm$. By the uniqueness of the cell problems we obtain 
\begin{align*}
\tilde{q}_1^{\pm} = -q_2^{\pm} \quad \mbox{ and } \quad \tilde{q}_2^{\pm} = q_1^{\pm}.
\end{align*}
Further relations can be established between $q_i^+$ and $q_i^-$ for $i=1,2$. 
If we consider the rotation $\hat{R}\in \R^{3\times 3}$ such that 
$\hat{R}e_1 = -e_1$, $\hat{R}e_2 = e_2$ and $\hat{R}e_3 = -e_3$, then it holds
\begin{align} \label{eq:rel_cell_sol_2}
    \hat{R}^T q_1^\pm (\hat{R}y)= -q_1^\mp (y) \quad  \mbox{ and } \quad \hat{R}^T q_2^\pm(\hat{R}y) = q_2^\mp (y), \quad \mbox{for } y\in Z_f.
\end{align}
The proof is analogous to that of \eqref{eq:rel_cell_sol_1}.

Since due to the relations \eqref{eq:rel_cell_sol_1}-\eqref{eq:rel_cell_sol_2} solutions $q_1^-,\,q_2^\pm$ exhibit a similar behavior to that of $q_1^+$, in the following we will analyze the cell solutions $q_1^+$ and $q_3$.
We first present these solutions for three different heights and fixed radius $r=0.1$, and then fix $h=1.7$ and show solutions for three different radii. The corresponding results are displayed in  \cref{fig:cellSolutions_manyH} and \cref{fig:cellSolutions_manyR}, respectively. 
\begin{figure}[H]
    \begin{minipage}[t]{0.8\textwidth}
    \vspace{0pt}
    \subfloat[]{\includegraphics[width=0.305\linewidth]{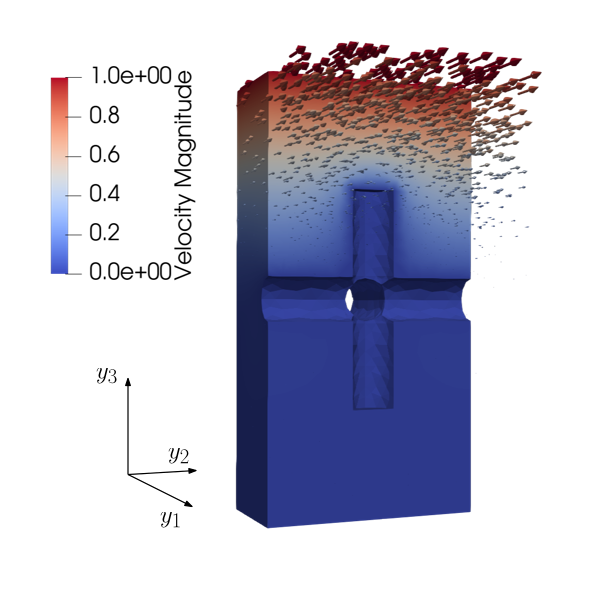}}
    \subfloat[]{\includegraphics[width=0.305\linewidth]{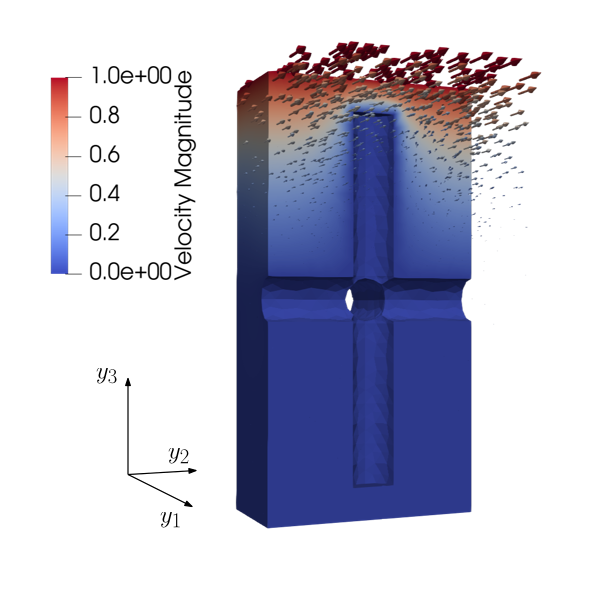}}
    \subfloat[]{\includegraphics[width=0.305\linewidth]{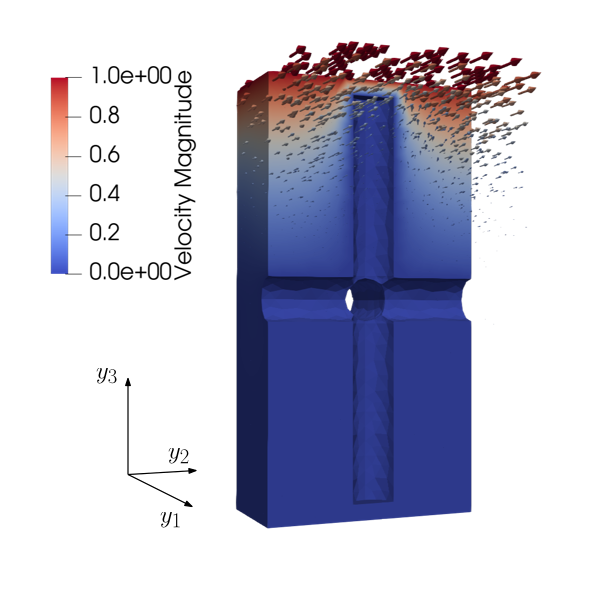}}\\
    \subfloat[]{\includegraphics[width=0.305\linewidth]{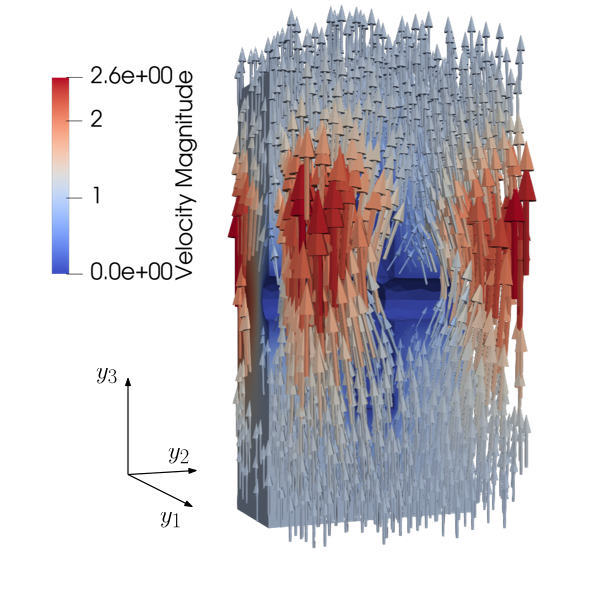}}
    \subfloat[]{\includegraphics[width=0.305\linewidth]{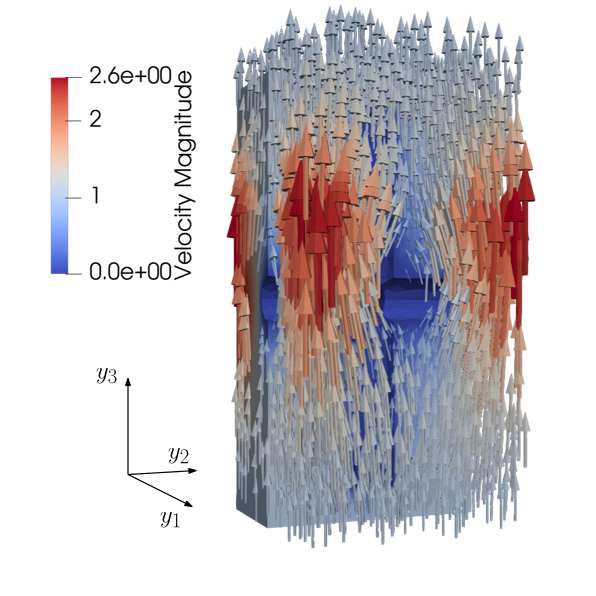}}
    \subfloat[]{\includegraphics[width=0.305\linewidth]{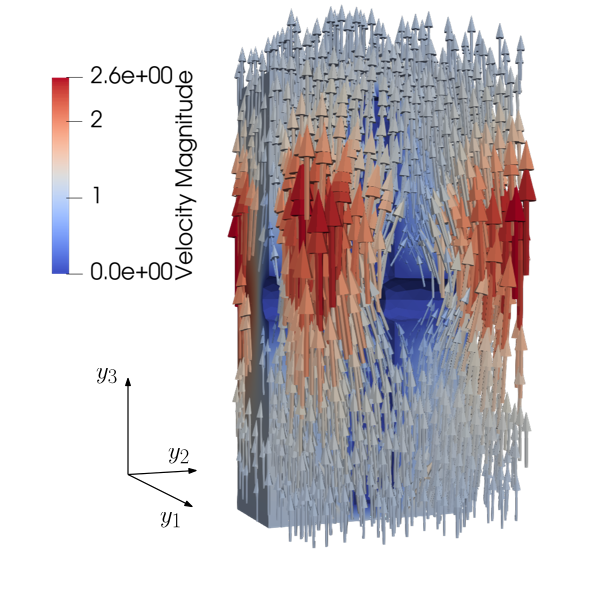}}
    \end{minipage}\hfill
\begin{minipage}[t]{0.2\textwidth}
    \vspace{10pt}
    \caption{ 
    Solutions $q_1^+$ in (a)-(c) and $q_3$ in (d)-(e) to Stokes cell problems \eqref{prob:cellprob1-4} and \eqref{prob:cellprob5}, respectively, for values $h=1.0$, $1.7$, $1.85$, and fixed $r=0.1$.}
    \label{fig:cellSolutions_manyH}
\end{minipage}
\end{figure}
\begin{figure}[H]
    \begin{minipage}[t]{0.8\textwidth}
    \vspace{0pt}
    \subfloat[]{\includegraphics[width=0.305\linewidth]{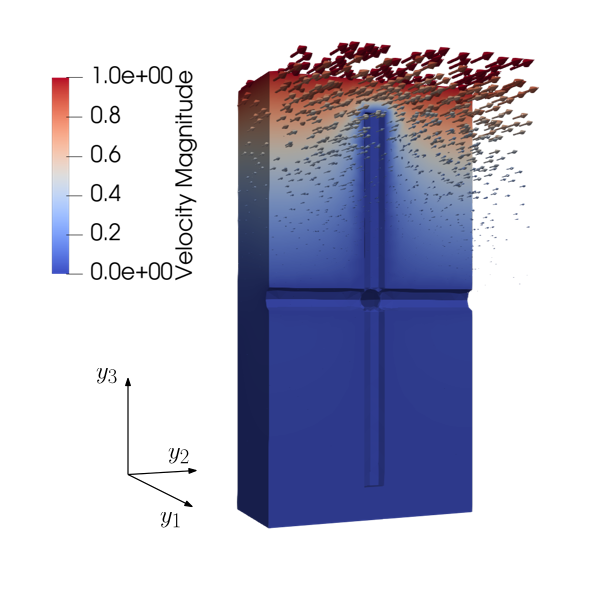}}
    \subfloat[]{\includegraphics[width=0.305\linewidth]{Images/Simulations/cell_sol/symm_cases/1d_height1_7_r0_1_cellProb1_w-y.png}}
    \subfloat[]{\includegraphics[width=0.305\linewidth]{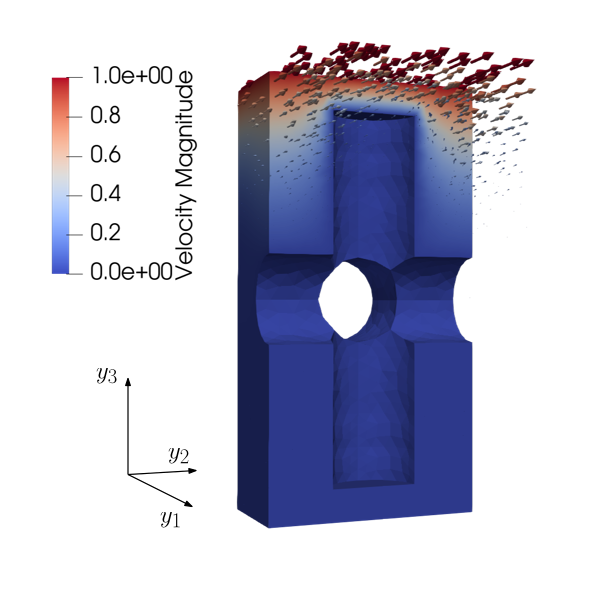}}\\
    \subfloat[]{\includegraphics[width=0.305\linewidth]{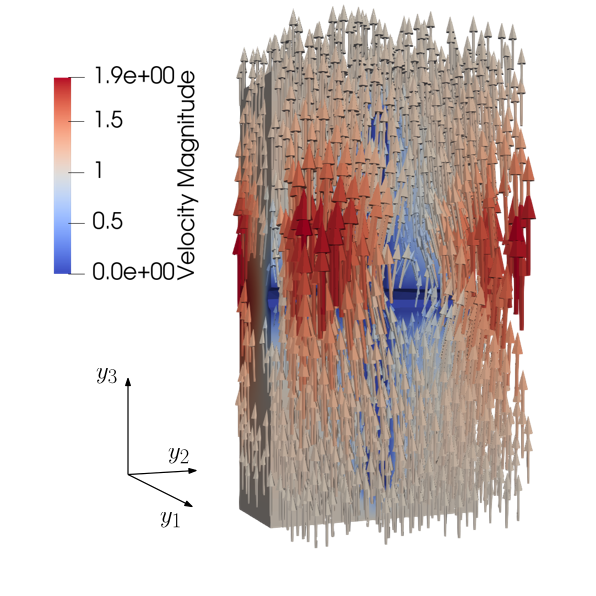}}
    \subfloat[]{\includegraphics[width=0.305\linewidth]{Images/Simulations/cell_sol/symm_cases/1d_height1_7_r0_1_cellProb5_w-y.png}}
    \subfloat[]{\includegraphics[width=0.305\linewidth]{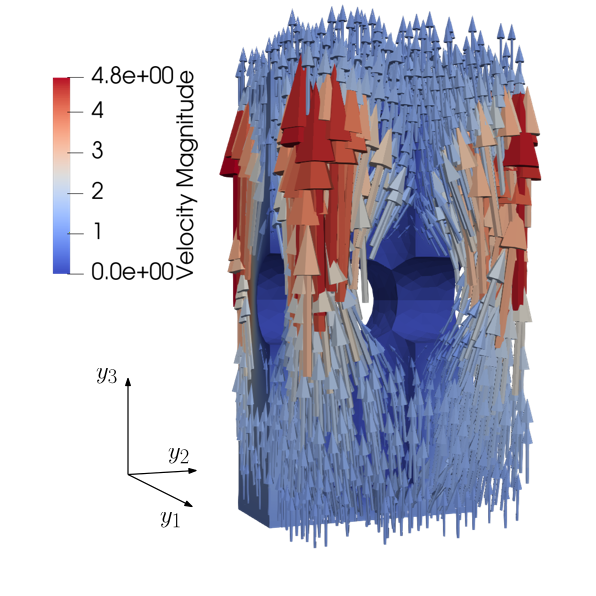}}
    \end{minipage}\hfill
\begin{minipage}[t]{0.2\textwidth}
\vspace{10pt}
\caption{ 
Solutions $q_1^+$ in (a)-(c) and $q_3$ in (d)-(e) to Stokes cell problems \eqref{prob:cellprob1-4} and \eqref{prob:cellprob5}, respectively, for $r=0.05$, $0.1$, $0.2$, and fixed $h=1.7$.}
\label{fig:cellSolutions_manyR}
\end{minipage}
\end{figure}
\cref{fig:cellSolutions_manyH} and \cref{fig:cellSolutions_manyR} show that the numerical method is indeed capable of solving the Stokes problem independently of the proposed changes in the geometry. In particular, we can observe that cell problem $q_1^+$ exhibits visible differences for varying $h,\,r$, while $q_3$ is rather unchanged from increasing $h$ and shows sharper gradients for increasing $r$.

\begin{figure}[H]
    \begin{minipage}[t]{0.8\textwidth}\centering
    \vspace{0pt}
       \subfloat[]{
        \includegraphics[width=0.268\linewidth]{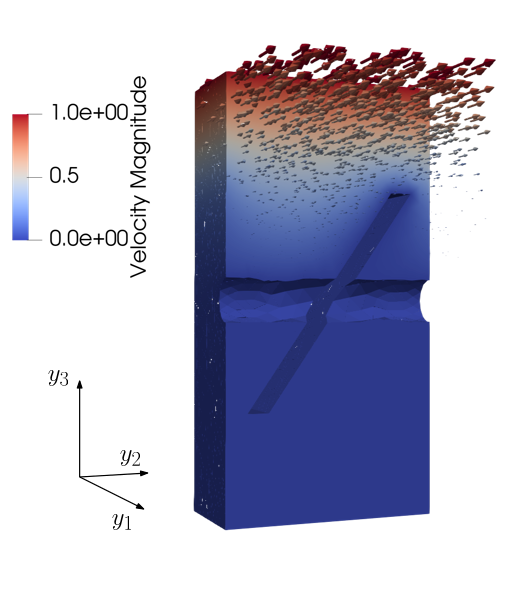}}\hspace{0.28cm}
    \subfloat[]{
        \includegraphics[width=0.268\linewidth]{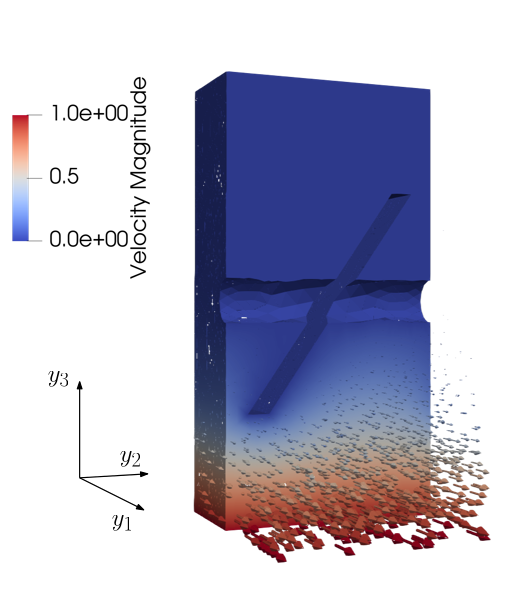}}\hspace{0.28cm}
    \subfloat[]{
        \includegraphics[width=0.268\linewidth]{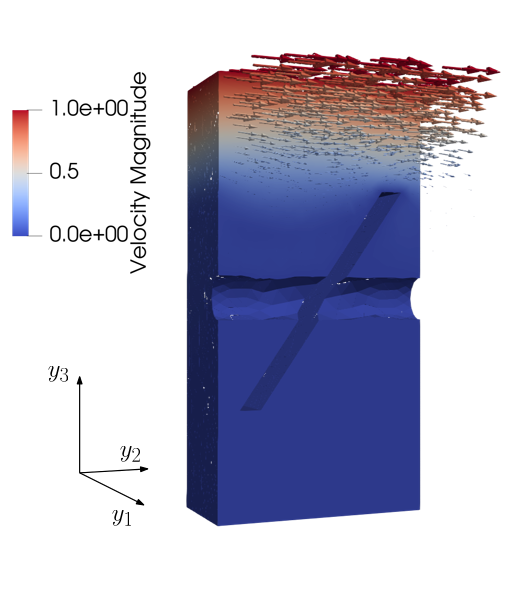}}\hspace{0.28cm}
    \subfloat[]{
        \includegraphics[width=0.268\linewidth]{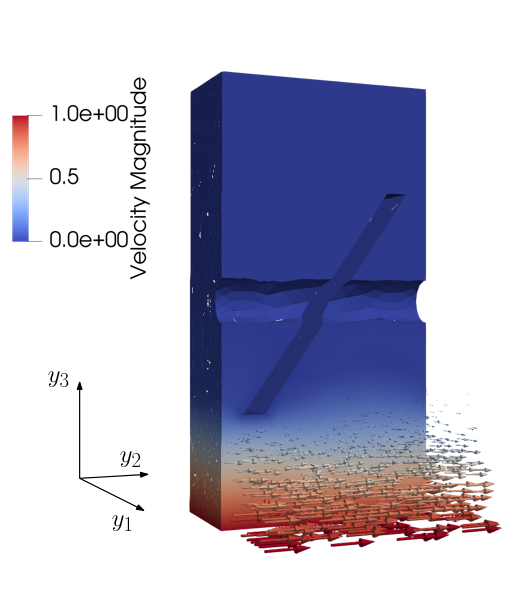}}\hspace{0.28cm}
    \subfloat[]{
        \includegraphics[width=0.284\linewidth]{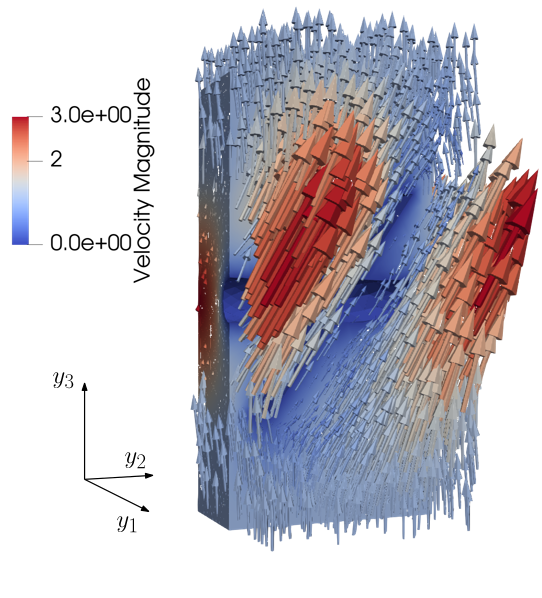}}
\end{minipage}\hfill
\begin{minipage}[t]{0.2\textwidth}
    \vspace{10pt}
    \caption{Sectional views of solutions $q_i^\pm, \,i\in \{1,2\}$, in (a-d) and $q_3$ in (e) to Stokes cell problems \eqref{prob:cellprob1-4} and \eqref{prob:cellprob5}, respectively, in the case of asymmetric cell geometry.}
    \label{fig:5cellProblems_asymmetric}
\end{minipage}
\end{figure}
In \cref{fig:5cellProblems_asymmetric}, the cell solutions are computed for the asymmetric obstacle given in \cref{fig:PeriodicityCell-sym_asym}(c). While the cell solutions $q^\pm_i,\, i \in \{1,2\}$, look qualitatively similar to the previously computed cell solutions for the symmetric obstacle, see \cref{fig:5cellProblems}(a)-(d), the velocity $q_3$ now exhibits a visible deflection at the inclined plane. It will be investigated in Section~\ref{sec:macroscopicSimulations} how different cell geometries affect the macroscopic flow.

\subsection{Computation of effective coefficients and sensitivity study} \label{subsec:effectivecoefficients}
In the following, based on the numerical results for the cell problems, we compute the effective coefficient tensors $K^\pm, \ M^\pm \in \mathbb{R}^{3\times 3}$  defined in \eqref{Effective_coeff}. For the case when the microscopic geometry is generated by the symmetric cylindrical cross, we analyze their sensitivity to the geometric parameters. 

Let us first mention some properties of the tensors $K^\pm$ and $M^\pm$, which can be derived analytically. Firstly, by definition, $K^\pm$ is symmetric, and $K_{33}^+ = K_{33}^-$. Furthermore, for the symmetric geometry mentioned above, the relations  \eqref{eq:rel_cell_sol_1} for the cell solutions lead to the following properties:
\begin{align}\label{eq:rel_eff_coef_1}
 K_{13}^{\pm} = K_{23}^{\pm} = 0, \quad  K_{12}^{\pm} = 0 \quad \mbox{ and } \quad K_{11}^\pm = K_{22}^{\pm}.
\end{align}
As an example, let us show the first relation in \eqref{eq:rel_eff_coef_1}. Using relation  \eqref{eq:rel_cell_sol_1} and a change of coordinates, we get for $\alpha \in \{\pm\}$
\begin{align*}
 K_{13}^\alpha &= \int_{Z_f} D_y(q_1^\alpha)(y) : D_y(q_3)(y) dy 
 \\ &= \int_{Z_f} \left( R^T D_y(q_2^\alpha)(Ry) R \right) : \left( R^T D_y(q_3)(Ry) R \right)  dy 
 \\ 
 &= \int_{Z_f} D_y(q_2^\alpha)(Ry) : D_y(q_3)(Ry)  dy 
 \\
 &= \int_{Z_f} D_{\tilde{y}}(q_2^\alpha)(\tilde{y}) : D_{\tilde{y}}(q_3)(\tilde{y})  d\tilde{y} = K_{23}^\alpha 
\end{align*}
Similarly, we obtain $K_{23}^{\pm} = -K_{13}^{\pm}$. This implies the desired result. The other relations in \eqref{eq:rel_eff_coef_1} follow by similar arguments. Using now relation \eqref{eq:rel_cell_sol_2}, we can show that $K_{ii}^+ = K_{ii}^-$, for $i=1,2$. Thus, we conclude that $K^+ = K^-$ are diagonal tensors with two different entries, namely $K_{11}^+=K_{22}^+$ and $K_{33}^+$. Concerning the tensors $M^\pm$, by definition, we have $M^- =(M^+)^T$. By similar arguments as above, we obtain that $M^\pm$ is diagonal with $M_{11}^\pm = M_{22}^\pm$.

The coefficients $K^\pm$ and $M^\pm$ are computed numerically for the following values of height $h$ and radius $r$:
{\small\begin{align*}h = 1.0, 1.05, 1.1, ..., 1.85, 1.86, 1.87, ..., 1.99 \quad\text{ and }\quad r=0.05,0.1,0.2.
\end{align*}}\unskip\nobreak 
The numerical results confirm the properties of the coefficients established analytically above. Furthermore, it turns out that $M^\pm_{11}= M^\pm_{22}$ is almost vanishing for all values of $h$.  
\begin{figure}[H]
    \centering
    \includegraphics[width=1.0\linewidth]{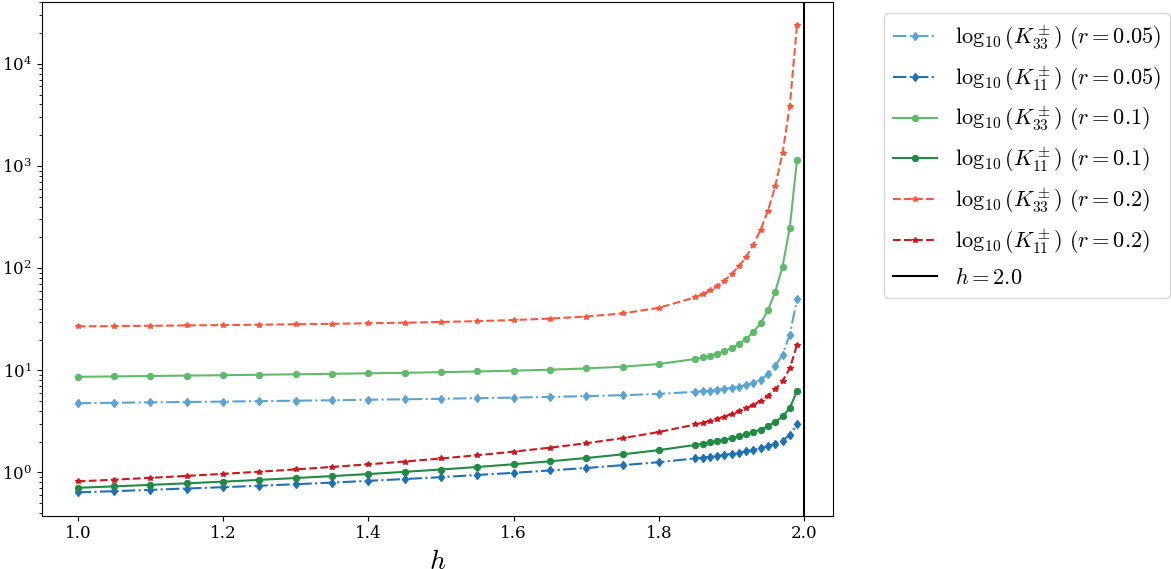}
    \caption{Coefficients $K^\pm_{11}$ and $K^\pm_{33}$ computed for different heights ($1.0 \leq h \leq 1.99$) and three radii ($r = 0.05,\,0.1,\,0.2$), shown on a logarithmic scale.}
    \label{fig:Coeff_Sim_Res}
\end{figure}
The (non-vanishing) coefficients $K^\pm_{11}$ and $K^\pm_{33}$ are plotted in \cref{fig:Coeff_Sim_Res}. We see that the coefficients $K^\pm_{33}$ consistently show larger magnitude across all heights and radii. Furthermore, we observe that the coefficients $K^\pm_{33}$ remain nearly constant up to $h = 1.85$. This corresponds to the observations in \cref{fig:cellSolutions_manyH}, (d)-(f), where no visible change in the cell solutions $q_3$ was observed for the heights $1.0, 1.7$ and $1.85$. We also note that for values of $h$ greater than $1.85$, there is a sharp increase in the coefficients $K^\pm_{11}$ and $K^\pm_{33}$, and that for $h \to 2$, the coefficients appear to diverge. Finally, we can see that for increasing values of $r$, the values of all coefficients increase.
\begin{remark}\label{rmk:vanishingM}
The almost vanishing values for the coefficient $M^\pm_{ii},\, i=1,2,$ are consistent with the localization of the energy of the cell solutions $q^\pm_i,\, i=1,2$ near the boundary $S^\pm$ where nonzero Dirichlet boundary conditions are imposed, see \cref{fig:5cellProblems}. Since the localization of the energies also holds for the asymmetric cell geometry, see \cref{fig:5cellProblems_asymmetric}, we computed the corresponding coefficients also for this case and obtained a similar behavior. In order to further understand the influence of the homogeneous Dirichlet boundary condition at the boundary $\Gamma$ of the solid obstacle on the values of the coefficients, we computed the coefficient $M_{11}^+$ with a solid obstacle consisting of a sphere centered at the origin. We then let its radius $r_{\text{sphere}}$ vanish. The results are shown in \cref{fig:MSphere}. We can observe that for a rather large spherical obstacle, $M^+_{11}$ is almost zero. As the sphere shrinks, $M^+_{11}$ converges to $-\frac{1}{4}$, which corresponds to the value of the coefficient $M^+_{11}$ for the case of the cell without an obstacle, see \eqref{eq:coeff_empty}.
\begin{figure}[H]
    \centering
    \includegraphics[width=1.0\linewidth]{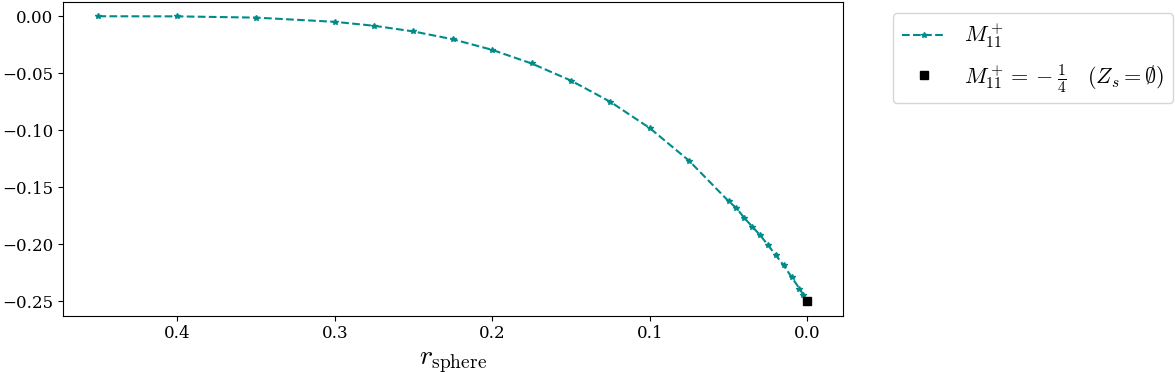}
    \caption{Coefficient $M^+_{11}$ computed for vanishing radii $r_{\text{sphere}}$ of a spherical obstacle centered at the origin in the periodicity cell $Z$.}
    \label{fig:MSphere}
\end{figure}
\end{remark}

\section{Numerical investigation of the effective transmission model}
\label{sec:macroscopicSimulations}
In this section, we start with the formulation of the discretized  effective transmission model and prove its well-posedness. We then perform numerical simulations to 
understand how the solutions corresponding to the different microstructures, see \cref{fig:PeriodicityCell-sym_asym}(b) and (c), (and thus to different effective coefficients) behave, particularly in the region near the interface. We also compare this behavior to the case where there is no solid skeleton in the layer, like in \cref{fig:PeriodicityCell-sym_asym}(a). Last but not least, we aim to investigate numerically to which extent the effective transmission problem approximates the microscopic problem which contains a thin porous layer of finite thickness.
\subsection{Discretization and numerical analysis of the effective transmission model}
\label{sec:discr_transm_model} 
We discretize the effective model \eqref{prob:macroscopic} 
using a mixed finite-element discretization with
Taylor--Hood-type ansatz spaces. Additionally to the
boundary conditions \eqref{eq:bound_cond_Dir_Neumann}
we allow for periodic boundary conditions of the form
\begin{align*}
    v^\pm(\Phi^\pm(x)) &= v^\pm(x)
     &\text{ on } \partial_P\Omega^\pm,
\end{align*}
with the boundary segment
$\partial_P\Omega^\pm \subset (\partial \Omega^\pm \setminus \Sigma)$.
Here $\Phi^\pm:\partial_P\Omega^\pm \to (\partial \Omega^\pm \setminus \Sigma)$
is a translation such that the outer boundary
of $\Omega^\pm$ is given by the disjoint decomposition
\begin{align}
    \label{eq:bc_with_periodic}
  \partial \Omega^\pm \setminus \Sigma
  = \partial_D\Omega^\pm \,\dot{\cup}\, \partial_N\Omega^\pm \,\dot{\cup}\,
  \partial_P\Omega^\pm \,\dot{\cup}\, \Phi^\pm(\partial_P\Omega^\pm).
\end{align}
Notice that this only covers cases where both sides
$\partial_P\Omega^\pm$ and $\Phi^\pm(\partial_P\Omega^\pm)$
of each periodic boundary are in the same subdomain $\Omega^\pm$.

While we could use a mixed finite element discretization
using classical Taylor--Hood ansatz functions on the
whole domain $\Omega$, the expected discontinuity of
the tangential velocities and the pressure across
the interface $\Sigma$ would require very fine
meshes for an appropriate approximation.
As a remedy we use Taylor--Hood
spaces separately on each subdomain and only enforce continuity
of the normal velocities across $\Sigma$, which
allows to represent the expected discontinuities
at the discrete level.

The velocity space with the corresponding continuity and
homogeneous boundary conditions is then given by
\begin{align*}
    H = \{\phi \in H^+ \times H^-\;:\; [\phi^+]_3 = [\phi^-]_3 \text{ on }\Sigma \}
\end{align*}
where
\begin{align*}
    H^\pm = \{\phi \in H^1(\Omega^\pm)\;:\; v= 0 \text{ on }\partial_D \Omega^\pm,\,
    v^\pm(\Phi^\pm(x)) = v^\pm(x) \text{ on } \partial_P\Omega^\pm\}
\end{align*}
now additionally incorporates periodic boundary conditions for $\Omega^\pm$.

For the discretization we consider uniform conforming
rectangular hexahedral grids $\mathcal{T}_h$
on $\Omega$ that are compatible with $\Omega^+$ and
$\Omega^-$ in the sense that both subdomains are exactly
covered by disjoint subsets of elements in $\mathcal{T}_h$.
Furthermore we assume that each of the boundary segments
in \eqref{eq:bc_with_periodic} is resolved by grid elements.
Here $h>0$ denotes the (maximal) edge length of the elements
in $\mathcal{T}_h$.

On such grids we denote by
\begin{align*}
    S_{h,k}^\pm
        &= \{ \phi \in C(\Omega^\pm) \,:\, \phi|_e \in \mathcal{Q}_{k}\forall e \in \mathcal{T}_h
        \subset H^1(\Omega^\pm)\}
\end{align*}
the classical Lagrange finite element spaces
on the subdomains $\Omega^\pm$
where $\mathcal{Q}_k$ are the $3$-variate tensor
polynomials of order $k\geq 1$.
Using this notation the product space
\begin{align*}
    (S_{h,k+1}^\pm)^3 &\times S_{h,k}^\pm
\end{align*}
is the classical Taylor--Hood-pair which is well-known
to be inf-sup-stable with $h$-independent constants
for the divergence operator.
The same holds true for the pair
\begin{align}
    \label{eq:th_classical}
    \bigl((S_{h,k+1}^\pm)^3 \cap H^1(\Omega^\pm, \Gamma)^3\bigr)
        &\times S_{h,k}^\pm
\end{align}
incorporating Dirichlet boundary conditions
for any $h$-independent strict subset
$\Gamma \subset \partial \Omega^\pm$ which is resolved by the grid elements
and the pair
\begin{align}
    \label{eq:th_classical_diri}
    \bigl((S_{h,k+1}^\pm)^3 \cap H^1(\Omega^\pm, \partial \Omega^\pm)^3\bigr)
        &\times (S_{h,k}^\pm \cap L^2_0(\Omega^\pm))
\end{align}
incorporating Dirichlet boundary conditions on the whole subdomain boundary.

Next we define discrete subspaces with the desired continuity conditions
\begin{align*}
    H_h^\pm &= (S_{h,k+1}^\pm)^3 \cap H^\pm,\\
    H_h &
        = (S_{h,k+1}^+)^3 \times (S_{h,k+1}^-)^3 \cap H
        = H^+_h \times H^-_h \cap H,\\
    Q_h &= (S_{h,k}^+ \times S_{h,k}^-) \cap Q.
\end{align*}
In contrast to the classical Taylor--Hood-pair on $\Omega$,
the functions in $H_h\times Q_h$ have discontinuous tangential velocities
and discontinuous pressure across the interface $\Sigma$.

Given these discrete function spaces we define the Galerkin
discretization of \eqref{eq:weak_problem} by:
Find $(v_h-v_D, p_h) \in H_h \times Q_h$ such that
\begin{subequations}
\label{eq:weak_problem_disc}
\begin{alignat}{3}
    &a(v_h,\phi)&{} + b(\phi,p_h) &= \ell(\phi)
        \qquad&\forall \phi \in H_h,\\
    &b(v_h,q)& &= 0
        & \forall q \in Q_h.
\end{alignat}
\end{subequations}
For simplicity we assume that
the boundary values can be represented exactly in 
the finite element space, i.e.,
$v_D \in (S_{h,k+1}^+)^3 \times (S_{h,k+1}^-)^3$.

To show existence and uniqueness we proceed analogously
as in the continuous case.
First we show inf-sup-stability of the pair $H_h \times Q_h$.

\begin{lemma}
    Assume that the grid  $\mathcal{T}_h$ is given by a uniform refinement
    of an initial coarse grid $\mathcal{T}_{h_0}$. Then the pair
    $H_h \times Q_h$ is inf-sup-stable for the bilinear form
    $b(\cdot,\cdot)$, i.e., there is a constant $\beta>0$
    independent of $h$, but possibly dependent on the coarse mesh,
    such that
    \begin{align*}
        \forall p \in Q_h:
        \qquad
            \sup_{\substack{v \in H_h\\v \neq 0}}
            \frac{b(v, p)}{\|v\|}
        \geq \beta \|p\|_{L^2(\Omega)}.
    \end{align*}
\end{lemma}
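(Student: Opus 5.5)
We mimic the proof of Proposition~\ref{prop:weak_problem_existence} at the discrete level and invoke Lemma~\ref{eq:infsup_subspaces} twice. First, we introduce the discrete analogues of the subspaces used there:
\begin{align*}
H^\pm_{h,0} = H^\pm_h \cap H^1(\Omega^\pm,\Sigma)^3, \qquad
Q^\pm_h = S^\pm_{h,k}\cap Q^\pm,
\end{align*}
where $Q^\pm = L^2(\Omega^\pm)$ if $|\partial_N\Omega^\pm|>0$ and $Q^\pm = L^2_0(\Omega^\pm)$ otherwise. Functions in $H^\pm_{h,0}$ vanish on $\Sigma$, so after extension by zero they lie in $H_h$. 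On each subdomain, the pair $H^\pm_{h,0}\times Q^\pm_h$ is a classical Taylor--Hood pair with Dirichlet conditions on $\Sigma\cup\partial_D\Omega^\pm$.

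If $|\partial_N\Omega^\pm|=0$, this Dirichlet set is the whole boundary, up to periodic identifications, and we are in the situation of \eqref{eq:th_classical_diri}. Otherwise it is an $h$-independent strict subset, and we are in the situation of \eqref{eq:th_classical}. In both cases we obtain $h$-independent constants $\beta^\pm$. The periodic case needs a short remark: it is treated as Taylor--Hood on the periodic quotient domain, where the standard macroelement proof still applies. Applying Lemma~\ref{eq:infsup_subspaces} to the pairs $(H^+_{h,0},Q^+_h)$ and $(H^-_{h,0},Q^-_h)$, which are biorthogonal since their supports are disjoint, gives a uniform inf-sup constant on $(H^+_{h,0}\times H^-_{h,0})\times(Q^+_h\times Q^-_h)$. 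When both $|\partial_N\Omega^\pm|>0$, we have $Q_h = Q^+_h\times Q^-_h$ and the proof is complete.

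In the remaining cases, $Q_h = (Q^+_h\times Q^-_h)\oplus \operatorname{span}\{\hat p\}$. Here $\hat p = (1,-1)$ if both Neumann parts are empty, and w.l.o.g.\ $\hat p=(1,0)$ if only $|\partial_N\Omega^+|=0$. The decomposition is $L^2$-orthogonal, and $\hat p$ is piecewise constant, hence belongs to $Q_h$. We then need a single discrete velocity $\hat v_h\in H_h$ with $b(\hat v_h,\hat p)\neq 0$ and with $b(\hat v_h, q)=0$ for all $q\in Q^+_h\times Q^-_h$, so that the biorthogonality in Lemma~\ref{eq:infsup_subspaces} holds. The norm $\|\hat v_h\|$ must be bounded independently of $h$, since the inf-sup constant on $\operatorname{span}\{\hat v_h\}\times\operatorname{span}\{\hat p\}$ is $|b(\hat v_h,\hat p)|/(\|\hat v_h\|\,\|\hat p\|)$.

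\emph{This is the main obstacle.} Exact biorthogonality is not automatic for discrete functions, because $\nabla\cdot\hat v_h$ is not piecewise constant. We would proceed as follows.

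First, we build $\hat v_h$ on the coarse grid $\mathcal{T}_{h_0}$. Take a coarse-grid $\mathcal{Q}_{k+1}$ field $w$ with $w\cdot e_3 = \psi$ on $\Sigma$, where $\psi$ is a fixed coarse bubble with $\int_\Sigma\psi\,d\sigma\neq0$. Take $w$ normal-continuous and vanishing on $\partial_D\Omega$ and on the periodic and Neumann parts as needed. In the case $\hat p=(1,0)$, extend $w$ into $\Omega^-$ so that it vanishes on $\partial_D\Omega^-$; there the flux can leave through $\partial_N\Omega^-$. Because the grids are nested, $w\in H_h$ for every $h\le h_0$ with an $h$-independent norm, and $b(w,\hat p) = -\int_\Sigma \psi\,d\sigma\neq0$ by the divergence theorem.

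Second, to enforce biorthogonality we avoid an exact construction and instead correct $w$. Using the already established uniform inf-sup on $(H^+_{h,0}\times H^-_{h,0})\times(Q^+_h\times Q^-_h)$, choose $z_h\in H^+_{h,0}\times H^-_{h,0}$ with
\begin{align*}
b(z_h,q) = b(w,q)\quad\text{for all } q\in Q^+_h\times Q^-_h, \qquad \|z_h\|\le \beta^{-1}\|w\|,
\end{align*}
which is the standard discrete Fortin/right-inverse argument. Set $\hat v_h = w - z_h$. Then $b(\hat v_h,q)=0$ for all $q\in Q^+_h\times Q^-_h$. Moreover $b(z_h,\hat p)=0$: the functions in $H^\pm_{h,0}$ vanish on $\Sigma$ and on $\partial_D\Omega^\pm$, and where $\hat p^\pm$ is a nonzero constant the Neumann part of $\Omega^\pm$ is empty, so the divergence theorem gives $\int_{\Omega^\pm}\nabla\cdot z_h^\pm\,dx = 0$. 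Hence $b(\hat v_h,\hat p) = b(w,\hat p)\neq0$, with $\|\hat v_h\|$ bounded uniformly.

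A second application of Lemma~\ref{eq:infsup_subspaces} then yields the $h$-independent constant $\beta$. This constant depends on $h_0$ only through the choice of $w$.
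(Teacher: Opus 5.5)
Your argument is correct and has the same skeleton as the paper, which likewise transcribes the continuous proof of Proposition~\ref{prop:weak_problem_existence}. It first obtains a uniform inf-sup condition on $(H^+_{h,0}\times H^-_{h,0})\times(Q^+_h\times Q^-_h)$ from the pairs \eqref{eq:th_classical}, \eqref{eq:th_classical_diri} and Lemma~\ref{eq:infsup_subspaces}. It then adds $\operatorname{span}\{\hat p\}$, paired with a velocity taken from the coarse grid $\mathcal{T}_{h_0}$.

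The real difference is how that velocity is obtained. The paper builds $\hat v$ in the coarse velocity space, as in the continuous case with the coarse discrete divergence, and only argues that $\hat\beta$ is then $h$-independent. It does not revisit biorthogonality. Read literally, a coarse $\hat v$ whose coarse discrete divergence is constant on each subdomain need not have constant fine discrete divergence. So $b(\hat v,q)=0$ for $q\in Q^+_h\times Q^-_h$, which Lemma~\ref{eq:infsup_subspaces} requires, is not automatic.

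Your correction $\hat v_h=w-z_h$ removes exactly this issue. The function $z_h$ comes from the already established uniform inf-sup condition on the interior spaces, and $b(z_h,\hat p)=0$ by the divergence theorem. Hence biorthogonality holds exactly on every level, $\|\hat v_h\|$ is uniformly bounded, and $w$ needs no divergence constraint, only nonzero flux through $\Sigma$. The paper's coarse $\hat v$ could alternatively be kept via a one-sided variant of the lemma: $b(v_1,\hat p)=0$ for $v_1\in H^+_{h,0}\times H^-_{h,0}$ holds exactly, and the cross term $b(\hat v,p_1)$ can be absorbed at the price of a constant depending on $\|b\|$.

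For periodic boundaries, the paper's device is simpler than your quotient-domain remark. That remark also does not quite fit, because $Q_h$ carries no periodicity constraint. The paper instead restricts to the subspace $\tilde H_h\subset H_h$ of velocities vanishing on $\partial_P\Omega^\pm\cup\Phi^\pm(\partial_P\Omega^\pm)$, where everything is non-periodic and your $w$ and $z_h$ can be chosen as well. Enlarging the velocity space back to $H_h$ can only increase the supremum.

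Two harmless slips:
\begin{itemize}
\item The right-inverse bound should read $\|z_h\|\le\beta^{-1}\|b\|\,\|w\|$.
\item Since $\nu^+=-e_3$ on $\Sigma$, one gets $b(w,\hat p)=\int_\Sigma\psi\,d\sigma$ for $\hat p=(1,0)$ and $2\int_\Sigma\psi\,d\sigma$ for $\hat p=(1,-1)$, which is nonzero in both cases.
\end{itemize}
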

\begin{proof}
    We first consider the case that
    $\partial_P\Omega^+ = \partial_P\Omega^- = \emptyset$.
    Then the proof is almost literally the same as the proof
    of the continuous inf-sup-condition for
    Proposition~\ref{prop:weak_problem_existence}.
    We only have to replace the spaces
    $H,\,Q,\, H^\pm,\, L^2(\Omega^\pm)$
    by their discrete analogues
    $H_h,\, Q_h,\, H^\pm_h,\, S_{h,k}^\pm$, respectively.
    Furthermore, we have to replace
    the continuous weak divergence operator
    $\operatorname{div} :  H^1(\Omega^\pm)^3 \to L^2(\Omega^\pm)$
    by the discrete weak divergence operator
    $\operatorname{div}_h :  H^\pm_h \to S_{h,k}^\pm$
    which is induced by the bilinear form
    $b(\cdot,\cdot) : H^\pm_h \times S_{h,k}^\pm \to \mathbb{R}$ 
    and use the inf-sup-conditions on the pairs
    \eqref{eq:th_classical} and \eqref{eq:th_classical_diri}
    instead of their continuous analogues.
    
    The obtained inf-sup-constant $\beta>0$
    on $H_h \times Q_h$ then only depends on the
    $h$-independent constants for the pairs
    \eqref{eq:th_classical} and \eqref{eq:th_classical_diri}
    and possibly on the terms
    $\hat{\beta} = |\frac{b(\hat{v}, \hat{p})}{\|\hat{v}\|\|\hat{p}\|}|>0$.
    Since the functions $\hat{p}$ from the proof of Proposition~\ref{prop:weak_problem_existence} are piecewise constant 
    with respect to $\Omega^\pm$,
    they are contained in the pressure space on the coarse grid
    $\mathcal{T}_{h_0}$.
    Hence we can also construct the corresponding functions $\hat{v}$
    in the velocity space on the coarse grid $\mathcal{T}_{h_0}$,
    such that $\hat{\beta}$ and thus $\beta>0$ are
    $h$-independent.

    In the case of nontrivial periodic boundary conditions
    we can use the above argument to show an $h$-independent
    inf-sup-estimate on $\tilde{H}_h \times Q_h$ where
    \begin{align*}
        \tilde{H}_h
            = \{v \in H_h\;:\; v = 0 \text{ on }
                \partial_P\Omega^+ \cup \Phi^+(\partial_P\Omega^+)
                \cup \partial_P\Omega^- \cup \Phi^-(\partial_P\Omega^-)
                \}
    \end{align*}
    is the space with homogeneous Dirichlet conditions on the periodic
    boundaries. Since $H_h \supset \tilde{H}_h$ is a super-space,
    this estimate directly carries over to $H_h \times Q_h$.
\end{proof}

\begin{proposition}
    \label{prop:disc_existence}
    Let $Z_s \neq \emptyset$.
    Then, there exists a unique weak solution $(v_h,p_h)$ to the discrete
    transmission problem~\eqref{eq:weak_problem_disc}. Furthermore there
    is a constant $C>0$ independent of $v$, $p$, and $h$ such that
    \begin{align}
        \|v-v_h\|_H + \|p-p_h\|_{L^2(\Omega)}
        \leq C \Bigl(
            \inf_{\phi \in H_h} \|v-\phi \|_H +
            \inf_{q \in Q_h} \|p-q \|_{L^2(\Omega)}
        \Bigr).
    \end{align}
\end{proposition}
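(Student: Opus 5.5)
The plan is to apply the classical Brezzi theory for conforming Galerkin discretizations of saddle point problems to \eqref{eq:weak_problem_disc}, viewed as a discretization of \eqref{eq:weak_problem}. The ingredients are continuity of $a$ and $b$, coercivity of $a$ on the discrete kernel, and a uniform discrete inf-sup condition. For the latter we invoke the preceding lemma, which gives an $h$-independent $\beta>0$ for the pair $H_h \times Q_h$.

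\textbf{Step 1: conformity.} The discretization is conforming: $H_h \subset H$ and $Q_h \subset Q$. For $H_h\subset H$, the normal continuity $[\phi^+]_3=[\phi^-]_3$ on $\Sigma$ is imposed by definition, and the Dirichlet and periodic constraints are inherited from $H^\pm$. For $Q_h \subset Q$, the intersection with $Q$ in the definition of $Q_h$ enforces the mean-zero condition when $\partial_N\Omega=\emptyset$.

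\textbf{Step 2: continuity and coercivity.} Boundedness of $b$ is immediate from $|\nabla\cdot v| \le \sqrt{3}|\nabla v|$. Boundedness of $a^\pm$ is clear. Boundedness of $a^\Sigma$ follows from the trace theorem $\|v^\pm\|_{L^2(\Sigma)} \le C\|v^\pm\|_{H^1(\Omega^\pm)}$ together with the finiteness of the entries of $\mathcal K$. For coercivity, the proof of Proposition~\ref{prop:weak_problem_existence} shows $a(v,v)\ge C\|v\|_H^2$ on all of $H$; this uses \eqref{ineq:coercivity_macro_fluid}, valid because $Z_s\neq\emptyset$, and Korn's inequality. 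Since $H_h\subset H$, coercivity holds on $H_h$, and in particular on the discrete kernel $\{v\in H_h: b(v,q)=0\ \forall q\in Q_h\}$, with the same $h$-independent constant. This sidesteps the usual difficulty that the discrete kernel is not contained in $V$.

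\textbf{Step 3: well-posedness and the quasi-optimal estimate.} The periodic constraints in $H^\pm$ are closed conditions, so Korn's inequality still applies there. The coercivity argument needs either a Dirichlet part or the interface term; the latter is always present through $c_0\int_\Sigma|v^\pm|^2$, so Korn's second inequality with the $L^2(\Sigma)$ term suffices in all cases.

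With continuity, kernel coercivity and the uniform discrete inf-sup condition in hand, Brezzi's theorem yields existence and uniqueness of $(v_h-v_D,p_h)$. Inhomogeneous data are handled by the standard shift: since $v_D$ lies in the discrete space by assumption, we solve for $v_h-v_D\in H_h$, and $v_D$ only enters the right-hand side $\ell(\phi)-a(v_D,\phi)$. Galerkin orthogonality follows from conformity, because the continuous solution satisfies \eqref{eq:weak_problem} for all test functions in $H_h\subset H$ and $Q_h\subset Q$. The standard quasi-optimality estimate for conforming mixed methods then gives the asserted bound. Its constant depends only on the continuity constants, the coercivity constant and $\beta$, all of which are $h$-independent. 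The shift by $v_D$ does not affect the infimum, because $v-\phi = (v-v_D)-(\phi-v_D)$ and $\phi-v_D$ ranges over $H_h$ exactly when $\phi$ ranges over $v_D+H_h$.

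\textbf{Expected main obstacle.} The only step needing care is confirming that coercivity and the inf-sup constant are uniform in $h$; both are settled above by the inclusion $H_h\subset H$ and by the preceding lemma.
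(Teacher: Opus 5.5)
Your proposal is correct and takes the same route as the paper. The paper notes that continuity of $a$ and $b$ and coercivity of $a$ on all of $H$ carry over to the conforming subspaces, invokes the $h$-independent discrete inf-sup lemma, and concludes by Brezzi's theory; your extra remarks on conformity, on the periodic constraints in Korn's inequality, and on reading the best-approximation term as an infimum over $v_D + H_h$ when $v_D \neq 0$ only make explicit what the paper leaves implicit.
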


\begin{proof}
    We only need to note that continuity and coercivity
    of $a(\cdot, \cdot)$ and continuity of $b(\cdot,\cdot)$
    on the continuous spaces are inherited by the discrete
    subspaces. Furthermore the inf-sup-condition is satisfied
    with an $h$-independent constant. Then the existence
    result and error estimate follow from the classical
    theory of Brezzi \cite{Brezzi1974}.
\end{proof}

As usual combining the best-approximation error bounds
of Proposition~\ref{prop:disc_existence} with interpolation
error bounds results in the error estimate
\begin{align}
    \|v-v_h\|_H + \|p-p_h\|_{L^2(\Omega)} \in O(h^{k+1})
\end{align}
if the solution $(v,p)$ is sufficiently smooth.

\subsection{Implementation aspects}
\label{sec:ImplementationAspects}
As for the cell-problem, the implementation of the
macroscopic transmission problem is also carried out using
\textit{DUNE} \cite{DUNE1, DUNE:2021}.
A key challenge of the finite element discretization
introduced above is that,
in contrast to the classical Taylor--Hood-space,
tangential velocity and pressure are discontinuous
at the interface $\Sigma$.
To implement this space we use the \textit{DUNE} module
dune-functions \cite{EngwerEtAl2025} which provides
a flexible framework for the construction of
nontrivial finite element spaces.
Here we in particular make use of its ability to
construct nested product spaces and
spaces constrained to subdomains.
After defining objects \texttt{omegaPlus} and \texttt{omegaMinus}
which describe the two subdomains,
a basis for the piecewise Taylor--Hood-like space
\begin{align*}
    \Bigl((S_{h,k+1}^+)^{3}\times(S_{h,k+1}^-)^{3}\Bigr)
    \times
    \Bigl((S_{h,k}^+)\times(S_{h,k}^-)\Bigr)
\end{align*}
can be constructed using:
\begin{lstlisting}[language=C++]
auto basis = makeBasis(
      gridView,
      composite(
        composite(
          restrict(power<3>(lagrange<k+1>()), omegaPlus),
          restrict(power<3>(lagrange<k+1>()), omegaMinus)
        ),
        composite(
          restrict(lagrange<k>(), omegaPlus),
          restrict(lagrange<k>(), omegaMinus)
        )
      )
    );
\end{lstlisting}
Here, the \texttt{gridView} represents the computational grid,
\texttt{lagrange<k>()} is the $k$-th order Lagrange finite element space,
\texttt{restrict(...)} constraints a space to a subdomain,
\texttt{composite(...)} is a product of spaces,
\texttt{power<3>(...)} is a 3-fold power of a space.
In the actual implementation we pass additional arguments
that guide how the respective basis functions of this
product space are indexed. This allows to use an indexing
that is suited for the implementation of a
block-preconditioner for the linear saddle-point problem.
Notice that this space is discontinuous across
$\Sigma$ and thus not yet the space we want to use.

The discretization in this space is implemented using
the dune-fufem \cite{fufem} module which allows
to assemble matrices and vectors by defining the
respective bilinear and linear forms directly in \texttt{C++}
in a straightforward way without the need to
manually implement assemblers or to use code generation.
The essential Dirichlet and periodic boundary conditions
can be enforced using an abstract constraints mechanism
provided by dune-fufem as described in Section~\ref{subsec:meshGen}.
The functionality for periodic
boundary conditions can also be used for internal
interfaces and to couple different components of
the product space and thus allows to enforce
the continuity of normal velocities
$[v^+]_3 = [v^-]_3$ on $\Sigma$.

We point out that the implementation of the
\texttt{restrict(...)} feature in dune-functions, which
allows to restrict a basis to a subdomain as well as the support
in dune-fufem to assemble variational forms for such 
bases and on the interface between subdomains
was pushed forward by the present application.

\subsection{Numerical convergence study}
In the following, we perform a numerical convergence study with respect to the spatial refinement of the macroscopic grid. Hereby, we consider the computational domain $\Omega$ with $H = 1$ and
\begin{align}\label{eq:Omega_sim_1}
\Sigma := \left(-\frac12, \frac12 \right) \times \left(-\frac12, \frac12 \right),
\end{align}
i.e., we consider
\begin{align}\label{eq:Omega_sim_2}
\Omega := \Sigma \times \left(-1, 1 \right), \quad \Omega^- := \Sigma \times \left(-1, 0 \right), \quad \Omega^+ := \Sigma \times \left(0, 1 \right).
\end{align} 
Starting from a coarse grid with two cubes representing $\Omega^+$ and $\Omega^-$, a sequence of successively refined grids is generated by uniform refinement, where each cube is subdivided into eight equal smaller cubes. We consider the piecewise smooth manufactured solution
\begin{align*}
    v^\pm(x) &= a^\pm + b^\pm x_3 + c^\pm x_3^3, \\
    p^\pm(x) &= d^\pm + q(x_1,x_2) + e^\pm x_3^2,
\end{align*}
with $a^\pm = (a_1^\pm, a_2^\pm, w)^T$, $b^\pm = \pm 2([K^\pm a^\pm]_t + M^\mp a^\mp)^T$, $c^\pm = (c_1^\pm, c_2^\pm, 0)^T$, $d^- = 0$,  $d^+ = -(K^+a^+ + K^- a^-)\cdot e_3$, and $q(x_1,x_2) = \sin(2\pi x_1)\sin(2\pi x_2)$, where $a_1^\pm$, $a_2^\pm$, $w$, $c_1^\pm$, $c_2^\pm$, $e^\pm \in \mathbb{R}$. These functions solve the effective transmission problem~\eqref{prob:macroscopic} with force terms
\begin{align*}
    f^\pm(x) &= -3 c^\pm x_3 + \nabla q(x_1,x_2) + 2e^\pm x_3 e_3 
\end{align*}
and boundary conditions on $\partial \Omega$ given by
\begin{align*}
    -[D(v^\pm)-p^\pm I]\nu^\pm &= g^\pm
    &\mbox{ on } \Gamma^\pm_\text{cap},\\[0.7ex]
    v^+, v^- &\text{ are } \Sigma\text{-periodic},
     \end{align*}
with boundary stresses
\begin{align*}
    g^\pm(x_1,x_2) &= \pm \frac{1}{2}(b^\pm + 3c^\pm) \mp p^\pm(x_1,x_2,\pm1)e_3.
\end{align*}
 For our study, we use the effective transmission coefficients obtained from the cylindrical cross cell geometry in \cref{fig:PeriodicityCell-sym_asym}(b). The resulting errors and estimated orders of convergence (EOC) are displayed in Table~\ref{tab:macroscopic-convergence}. After a brief pre-asymptotic phase, the behavior of the error follows the ideal convergence orders for first-order Taylor--Hood elements (corresponding to $k=1$), namely cubic for the $L^2$ error in the velocity and quadratic for the $L^2$ error in pressure, as well as quadratic in the $H^1$ errors for the velocities on the subdomains.
\begin{table}[H]
    \centering
    \caption{Convergence study for the effective transmission model.}
    \label{tab:macroscopic-convergence}
    \small
    \setlength{\tabcolsep}{3pt}

    \begin{tabular}{
        c
        r
        rr
        rr
        rr
        rr
    }
        \toprule
        \multirow{2}{*}{Level}
        & \multirow{2}{*}{DOFs}
        & \multicolumn{2}{c}{$\|v-v_h\|_{L^2(\Omega)}$}
        & \multicolumn{2}{c}{$\|p-p_h\|_{L^2(\Omega)}$}
        & \multicolumn{2}{c}{$\|v^+-v_h^+\|_{H^1(\Omega^+)}$}
        & \multicolumn{2}{c}{$\|v^--v_h^-\|_{H^1(\Omega^-)}$}
        \\
        \cmidrule(lr){3-4}
        \cmidrule(lr){5-6}
        \cmidrule(lr){7-8}
        \cmidrule(lr){9-10}

        & & Error & EOC & Error & EOC & Error & EOC & Error & EOC
        \\
        \midrule

        0
        & 178
        & $1.59\times10^{-1}$ & --
        & $7.16\times10^{-1}$ & --
        & $4.66\times10^{-1}$ & --
        & $9.33\times10^{-1}$ & --
        \\

        1
        & 804
        & $2.47\times10^{-2}$ & 2.69
        & $5.70\times10^{-1}$ & 0.33
        & $1.76\times10^{-1}$ & 1.41
        & $2.66\times10^{-1}$ & 1.81
        \\

        2
        & 4\,624
        & $3.18\times10^{-3}$ & 2.96
        & $1.08\times10^{-1}$ & 2.40
        & $4.00\times10^{-2}$ & 2.13
        & $6.40\times10^{-2}$ & 2.06
        \\

        3
        & 30\,936
        & $3.30\times10^{-4}$ & 3.27
        & $2.45\times10^{-2}$ & 2.15
        & $8.17\times10^{-3}$ & 2.29
        & $1.49\times10^{-2}$ & 2.10
        \\

        4
        & 225\,448
        & $3.93\times10^{-5}$ & 3.07
        & $5.86\times10^{-3}$ & 2.06
        & $1.87\times10^{-3}$ & 2.13
        & $3.64\times10^{-3}$ & 2.04
        \\

        5
        & 1\,719\,624
        & $4.87\times10^{-6}$ & 3.01
        & $1.45\times10^{-3}$ & 2.02
        & $4.55\times10^{-4}$ & 2.04
        & $9.03\times10^{-4}$ & 2.01
        \\

        \bottomrule
    \end{tabular}
\end{table}

\subsection{Simulation of the effective transmission model} 
\label{sec:simul_transm_model}
In this section, we perform three-dimensional numerical simulations of the effective transmission problem~\eqref{prob:macroscopic} for different effective coefficients, and for the boundary conditions at the outer boundary $\partial \Omega$ described in the scenarios A-D below. The computational domain $\Omega$ is defined in \eqref{eq:Omega_sim_1}-\eqref{eq:Omega_sim_2}.
The boundary of $\Omega$ is divided into the lateral boundary 
$$
\Gamma_\text{lat}:= \Gamma^\pm_\text{left} \cup \Gamma^\pm_\text{right} \cup \Gamma^\pm_\text{back} \cup \Gamma^\pm_\text{front}
$$
where
\begin{eqnarray*}
    && \Gamma^\pm_\text{left} := \partial \Omega^\pm \cap \{x_2= - \frac12 \}, \qquad \Gamma^\pm_\text{right} := \partial \Omega^\pm \cap \{ x_2=  \frac12 \},\\
    && \Gamma^\pm_\text{back} := \partial \Omega^\pm \cap \{x_1 = - \frac12 \},\qquad \Gamma^\pm_\text{front} := \partial \Omega^\pm \cap \{ x_1=  \frac12 \},
 \end{eqnarray*} 
 and the upper and lower boundary
$$
 \Gamma^\pm_\text{cap} := \Sigma \times \{ \pm 1 \}.
$$
We consider effective coefficients corresponding to three different cell geometries, namely a cell without a solid skeleton, a symmetric cell involving the cylindrical cross, and an asymmetric cell containing an inclined segment, respectively, see \cref{fig:PeriodicityCell-sym_asym}(a)-(c). Moreover, for the cell geometries with nonempty solid part $Z_s$,  we consider the boundary conditions from Scenarios A, B, C, and D below, while for the cell geometry with $Z_s= \emptyset$, the boundary conditions from Scenarios A, B, C are considered.

In \textbf{Scenario A}, a constant velocity is specified at the upper boundary, and a homogeneous stress boundary condition is imposed at the lower boundary of the domain $\Omega$. Periodic boundary conditions are specified at the lateral boundaries.
    \begin{align*}
    v^+ &= (0,0,-1)^T
    &\mbox{ on } \Gamma^+_\text{cap},\\[0.7ex]
    -[D(v^-)-p^-I]\nu^- &= (0,0,0)^T &\mbox{ on } \Gamma^-_\text{cap}, \\[0.7ex]
    v^+, v^- &\text{ are } \Sigma\text{-periodic}
\end{align*}

In \textbf{Scenario B}, a parabolic velocity profile is specified at the upper boundary, and a homogeneous stress boundary condition is imposed at the lower boundary of the domain $\Omega$. Homogeneous Dirichlet conditions for the velocity are imposed at the lateral boundaries.
    \begin{align*}
    v^+(x) &= (0, 0, -(1-4x_1^2)(1-4x_2^2))^T 
    &\mbox{ on } \Gamma^+_\text{cap}, \\[0.7ex]
    -[D(v^-)-p^-I]\nu^- &= (0,0,0)^T 
    &\mbox{ on } \Gamma^-_\text{cap},\\[0.7ex]
    v^\pm(x) &= (0, 0, 0)^T 
    &\mbox{ on } \Gamma_\text{lat}.
    \end{align*}

In \textbf{Scenario C}, the flow is driven by a parabolic inflow profile parallel to the interface $\Sigma$ at $\Gamma^+_\text{left}$, while on the other boundary parts homogeneous Dirichlet or stress boundary conditions are specified.
    \begin{align*}
    v^+ &= (0,0,0)^T
    &\mbox{ on } \Gamma^+_\text{cap},\\[0.7ex]
    -[D(v^-)-p^-I]\nu^- &= (0,0,0)^T &\mbox{ on } \Gamma^-_\text{cap}, \\[0.7ex]
    v^+(x) &= (0,  v^C_2(x), 0)^T  
    &\mbox{ on } \Gamma^+_\text{left}\\[0.7ex]
    -[D(v^+)-p^+I]\nu^+ &= (0,0,0)^T 
    &\mbox{ on } \Gamma^+_\text{right}\\[0.7ex]
    v^+ &= (0,0,0)^T
    &\mbox{ on } \Gamma^+_\text{front}\cup\Gamma^+_\text{back},\\[0.7ex]
    -[D(v^-)-p^-I]\nu^- &= (0,0,0)^T 
    &\mbox{ on }  \Gamma^-_\text{left} \cup \Gamma^-_\text{right} \cup \Gamma^-_\text{front} \cup \Gamma^-_\text{back},
    \end{align*}
    with
    $$
v^C_2(x) = \max\left(0,(1-4x_1^2)\left(-\frac{64}{9}x_3^2+\frac{80}{9}x_3-\frac{16}{9}\right)\right).
$$

In \textbf{Scenario D}, a constant normal stress is prescribed at the upper boundary, while a homogeneous stress boundary condition is imposed at the lower boundary. Periodic boundary conditions are specified at the lateral boundaries of the domain $\Omega$.
     \begin{align*}
    -[D(v^+)-p^+I]\nu^+ &= (0,0,1)^T 
    &\mbox{ on } \Gamma^+_\text{cap},\\[0.7ex]
    -[D(v^-)-p^-I]\nu^- &= (0,0,0)^T 
    &\mbox{ on } \Gamma^-_\text{cap},\\[0.7ex]
    v^+, v^- &\text{ are } \Sigma\text{-periodic}.
     \end{align*}
Notice that all four scenarios satisfy the assumptions of Proposition~\ref{prop:disc_existence} which guarantees
well-posedness and error estimates.
In particular all scenarios include natural boundary
conditions on a subset of the boundary leading to
unique solutions in the discrete pressure space
$S_{h,k}^+ \times S_{h,k}^-$ which allows for non-vanishing
mean pressure.

For all simulations, we assume that $f^\pm = (0,0,0)^T$. The main differences between the scenarios lie in the type of boundary condition driving the flow (inhomogeneous \textit{Dirichlet}: Scenarios A, B, C; inhomogeneous \textit{Neumann}: Scenario D) and the direction of the flow at the corresponding boundary (normal to the interface: Scenarios A, B, D; parallel to the interface: Scenario C).
\begin{figure}
    \centering
    \includegraphics[width=1.0\linewidth]{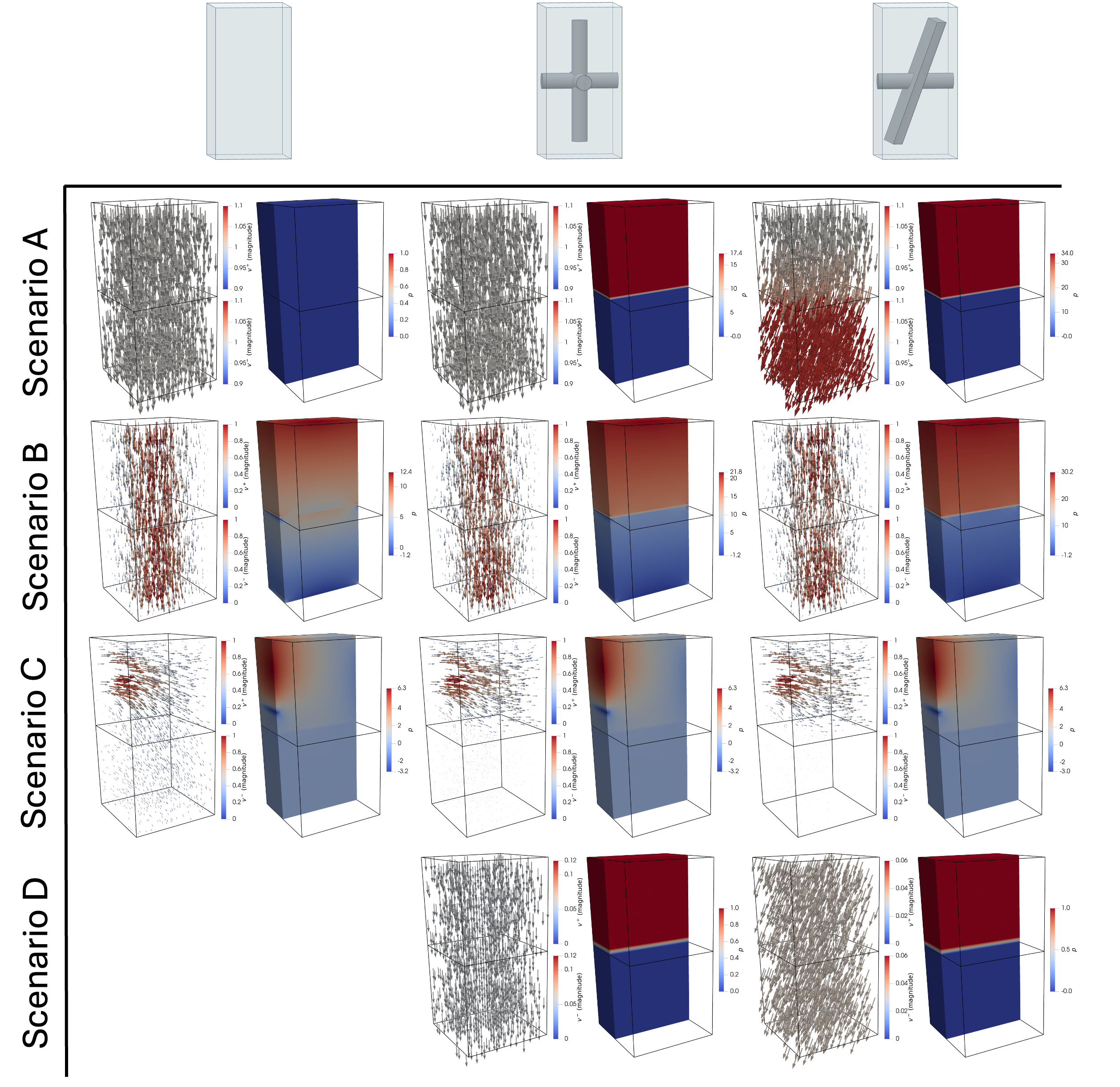}
    \caption{Simulation results of the macroscopic model for Scenarios A, B, C, and D for effective coefficients arising from a cell without obstacle, a symmetric cell (involving the cylindrical cross with h = 1 and r = 0.1), and an asymmetric cell (containing an inclined segment). Each row of figures corresponds to one simulation scenario and each column corresponds to one cell geometry. For each combination of scenario and cell geometry, both the velocity and the pressure are visualized next to each other. The problem combining Scenario D and the cell without obstacle is not well-posed, see also Remark~\ref{rem:well-posedness}.}
\label{fig:simulation_results_macroscopic_model}
\end{figure}
The numerical solution for all combinations of scenarios and cell geometries are illustrated in \cref{fig:simulation_results_macroscopic_model}. Here each row of figures corresponds to one simulation scenario and each column corresponds to one cell geometry. For all combinations, the macroscopic velocities and pressures are presented next to each other including both, the solution in the upper bulk, $\Omega^+$, and the solution in the lower bulk, $\Omega^-$. We note that, in some scenarios, the pressure exhibits a discontinuity at the interface $\Sigma$, and that the discrete ansatz space used for the simulations explicitly allows for such a discontinuity. The seemingly continuous transition of the pressure at the interface $\Sigma$, which in \cref{fig:simulation_results_macroscopic_model} is visible as a white line, is solely an artifact of ParaView’s interpolation of the discontinuous pressure field. 
\par 
For Scenario A, the macroscopic velocity is not affected by the inclusion of the cylindrical cross obstacle in the cell when compared to the velocity obtained from the simulation based on the cell without a solid phase. Meanwhile, the pressure, that appeared to be constant across $\Omega$ in the simulation with obstacle-free cell, is only piecewise constant after inclusion of the cross-shaped obstacle with a discontinuity across the interface $\Sigma$. This observation is shared on a qualitative basis with the simulations based on the cell with the inclined plane, although there the pressure drop across the interface is more severe. Additionally, the macroscopic velocity profile now shows a deflection that is consistent with the orientation of the inclined segment in the cell.

We emphasize that for the symmetric cell geometry, the solution of the transmission problem is given by a constant velocity and a piecewise constant pressure (constant in every bulk domain), namely
\begin{align}\label{eq:sol_ScenarioA}
v^{\pm} = - e_3,  \quad p^+ = 2K_{33}^+ = 2 K_{33}^- \quad \mbox{and} \quad  p^- = 0.
\end{align}
In fact, the only slightly critical relations that need to be checked are the transmission conditions \eqref{eq:macro_Tnormal} and \eqref{eq:macro_Ttangential}, that is, the jump condition for the normal component of the normal stress across $\Sigma$ and the tangential stress condition. For the latter, we use $K_{31}^{\pm} = K_{32}^\pm =0$, see \eqref{eq:rel_eff_coef_1}, to obtain $[K^{\pm} e_3]_t = 0$, implying that \eqref{eq:macro_Ttangential} is valid.
For the constant fluid velocity $v^\pm = -e_3$, the condition \eqref{eq:macro_Tnormal} reads:
\begin{align*}
    p^+ - p^- = K^+(-e_3)\cdot (-e_3) - K^- (-e_3)\cdot e_3 = K^+_{33} + K^-_{33} = 2 K_{33}^+,
\end{align*} 
which is satisfied by $p^\pm$ from 
\eqref{eq:sol_ScenarioA}. We observe that the numerical simulations in \cref{fig:simulation_results_macroscopic_model} are in good agreement with this explicit solution.
\par
In Scenario B, the presence of an obstacle again leads to a discontinuous pressure that jumps at the interface. The parabolic inflow profile is essentially maintained across the domain for all cell geometries, only a slight widening and narrowing of the velocity profile around the interface can be spotted. Moreover, the deflection of the flow is not visible in the case of the cell with the inclined plane.
\par 
In Scenario C, the parabolic inflow profile prescribes a velocity parallel to the interface. Nonetheless, it can be seen that a flux across the interface is established, in particular when the case of the obstacle-free cell is considered. A pronounced local pressure drop can be observed along the line where the prescribed parabolic inflow profile vanishes. This local pressure variation can be attributed to the abrupt change in the boundary velocity gradient at the edge of the inflow region, which induces large local stresses and a corresponding pressure response.
\par
Scenario D is characterized by a pure stress boundary condition that drives the flow. As mentioned in Remark~\ref{rem:well-posedness}, the transmission model derived from the cell without an obstacle is not well-posed for this scenario. Therefore, in \cref{fig:simulation_results_macroscopic_model} this case does not appear. For the symmetric cell geometry, we can, as in Scenario A, compute the explicit solution, namely 
\begin{align}\label{eq:sol_ScenarioD}
v^\pm = -\frac{1}{2 K_{33}}e_3, \quad p^+=1, \quad p^-=0,
\end{align}
and we see that here, too, the numerical results agree well with the explicit solution.
For the asymmetric case of the inclined plane, the velocity is approximately constant throughout the entire domain and has the direction $v^\pm/|v^\pm|\approx(0,-0.39,-0.92)^T$, which is close to the inclination of the plane in the cell, which is oriented approximately along $(0,-0.57,-0.82)$. The qualitative behavior is therefore consistent, and the remaining deviation can likely be attributed to the additional cylindrical obstacle and to the fact that the inclined plane does not extend over the full height of the cell.
\par
Taken together, these observations allow to draw several conclusions. First, we see that an asymmetry in the geometry of the obstacle within the cell causes a deflection of the flow. This is most clearly visible in Scenarios A and D, but the effect is also evident in the other scenarios.  Furthermore, pressure discontinuities across the interface are observed in several scenarios. The pressure jump is particularly pronounced when a normal volume flux across the interface is enforced through Dirichlet boundary conditions. Conversely, the interface can significantly reduce the velocity if the flow through the interface is not enforced by the physics of the boundary conditions (Scenarios C and D).

\subsection{Parameter study with respect to the parameter \texorpdfstring{$h$}{h}}
In this section, we examine how a change of the obstacle geometry within the periodicity cell $Z$ affects the solutions to the macroscopic model \eqref{prob:macroscopic}, while considering a cell geometry generated by the cylindrical cross, see \cref{fig:PeriodicityCell-sym_asym}(b). To this end, we compute numerical solutions to the four scenarios introduced in the previous section. Hereby, we gradually increase the height $h$ of the cylinder aligned with the $y_3$-axis from $1.85$ to $1.99$ in increments of $0.01$, that is, until it almost touches the top and bottom boundary of $\Omega$, and evaluate the norm of the velocity as well as the value of the pressure along the $x_3$-axis, see \cref{fig:solution_along_z_axis}.
\par
In Scenario A, the velocity appears to be independent of the height of the cylinder in the cell; however, the pressure jump across the interface increases as $h$ increases. There is a particularly strong increase in the jump for values of $h$ close to $2$, while the increase of the jump for smaller values of $h$ is rather mild. 
Scenarios B, C and D show a decrease of the velocity at the interface for increasing $h$, and similarly to Scenario A, the pressure jump increases with increasing $h$ in Scenario B and C. In Scenario D, the pressure jump appears to be independent of the value of $h$. It is noteworthy that the system reacts in a somewhat complementary manner to the increase of $h$ in Scenarios A and D. In A, the flow rate through the domain is prescribed by the \textit{Dirichlet} boundary condition at the top boundary, resulting in the velocity being constant and the pressure jump increasing for increasing $h$. In contrast, the velocity decreases while the pressure jump remains constant for $h$ approaching $2$ in Scenario D, where a non-zero boundary load is prescribed at the top boundary via a \textit{Neumann} boundary condition.
\par 
\begin{figure}
    \centering
    \includegraphics[width=1.0\linewidth]{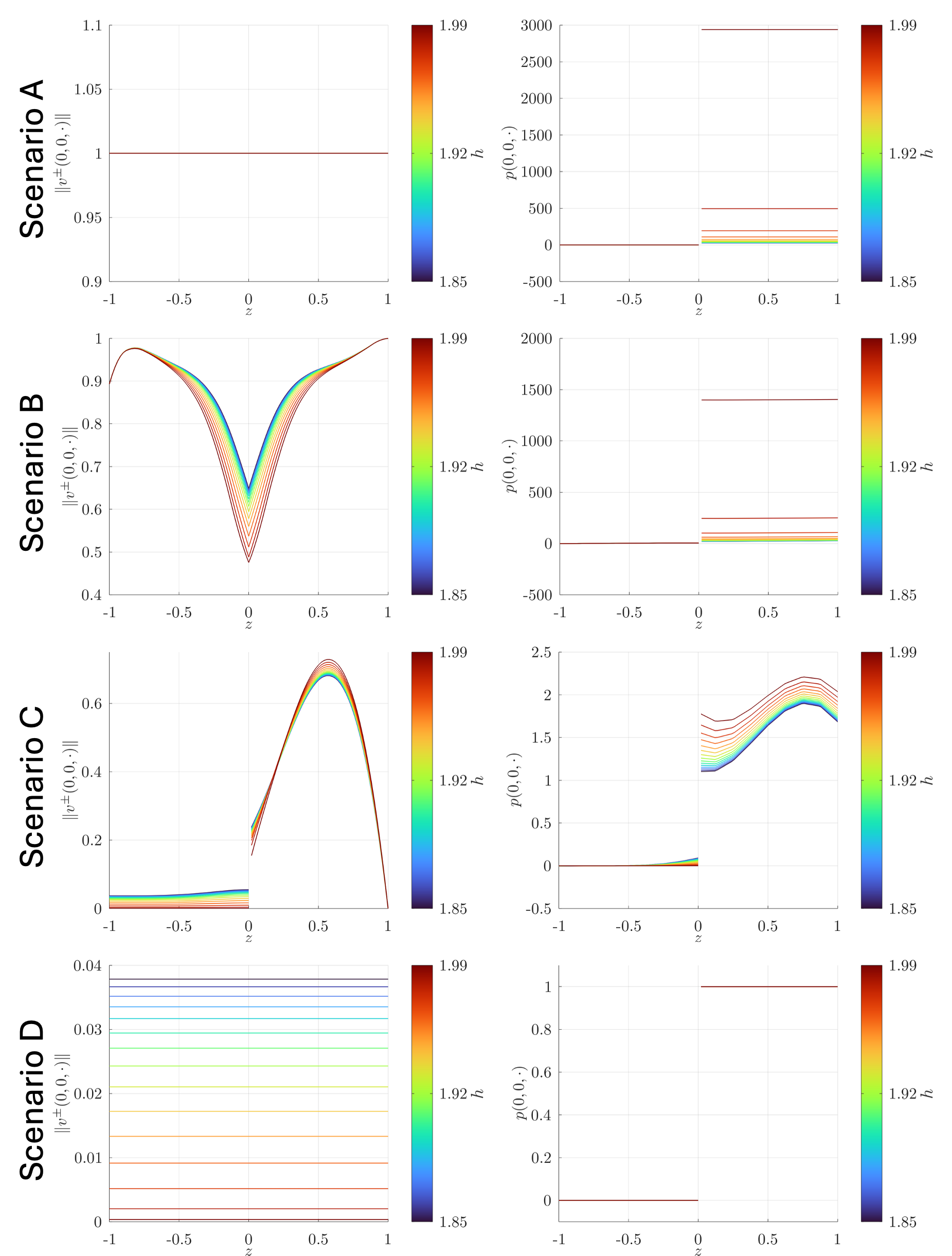}
    \caption{Parameter study for increasing height $h$ of the $y_3$ cylinder in the cylindrical cross obstacle in the cell. For $h \in [1.85, 1.99]$ and $r=0.1$, the macroscopic velocity magnitude and the pressure are evaluated along the $x_3$ axis in the macroscopic domain $\Omega$ for simulation Scenarios A, B, C, and D. The colors of the curves correspond to the values of the parameter $h$.}
    \label{fig:solution_along_z_axis}
\end{figure}

In the following, we look more closer to Scenario D, and perform a quantitative study of the effective velocity (for the case of the symmetric cell geometry). As we can see from \Cref{fig:scenario_D_h_study}(a), when the cylinder height $h$ is increased from 1.85 towards $1.99$, the velocity magnitude at the origin decreases monotonically and eventually approaches zero. The data points follow a slightly sigmoidal curve, indicating a nonlinear dependence of the macroscopic velocity on the cylinder height. The relation between the  effective velocity and the effective coefficient $K_{33} := K^\pm_{33}$ is made explicit in \Cref{fig:scenario_D_h_study}(b). Here, the values of $K_{33}$ corresponding to $h \in [1.85, 1.99]$ are plotted together with the effective velocity norm at the origin in a log-log graph. Over the entire investigated parameter range, the computed velocity values lie almost exactly on the reference curve $f(K_{33}) = \frac{1}{2K_{33}}$. Hence, the numerical results exhibit the relationship 
\begin{align}\label{eq:rel_v_eff_K33}
\| v(0,0,0) \| \approx \frac{1}{2K_{33}}.
\end{align}
In the double-logarithmic representation, this manifests itself as a straight line with slope $-1$. Relation \eqref{eq:rel_v_eff_K33} is validated by the explicit solution given in \eqref{eq:sol_ScenarioD}. Furthermore, the results in \Cref{fig:scenario_D_h_study} are also consistent with the findings about the dependence of the coefficient $K_{33}$ on the parameter $h$, see \cref{fig:Coeff_Sim_Res}.
\begin{figure}[H]
    \centering
    \subfloat[]{
        \includegraphics[width=0.49\linewidth]{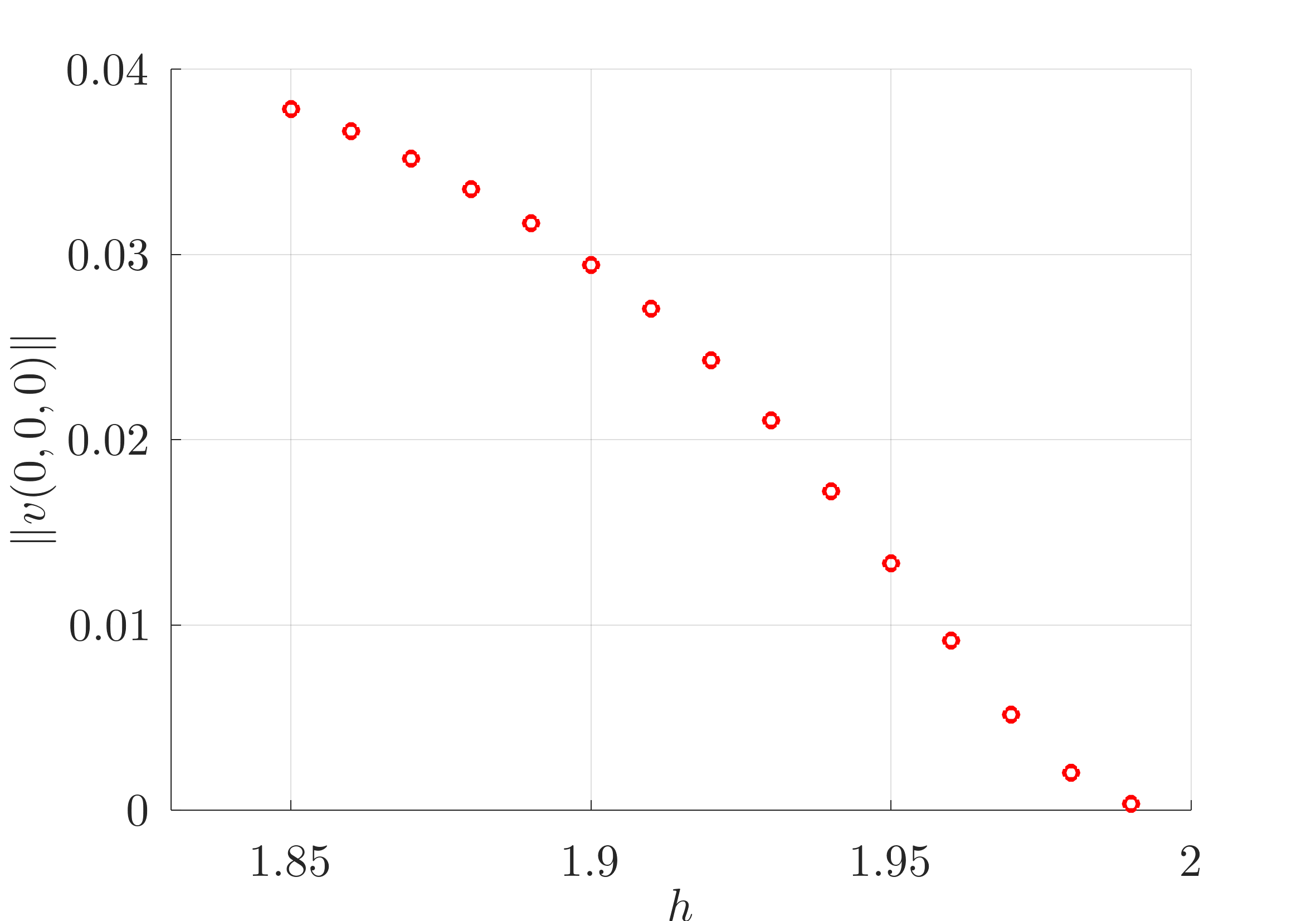}}\hfill
    \subfloat[]{
        \includegraphics[width=0.49\linewidth]{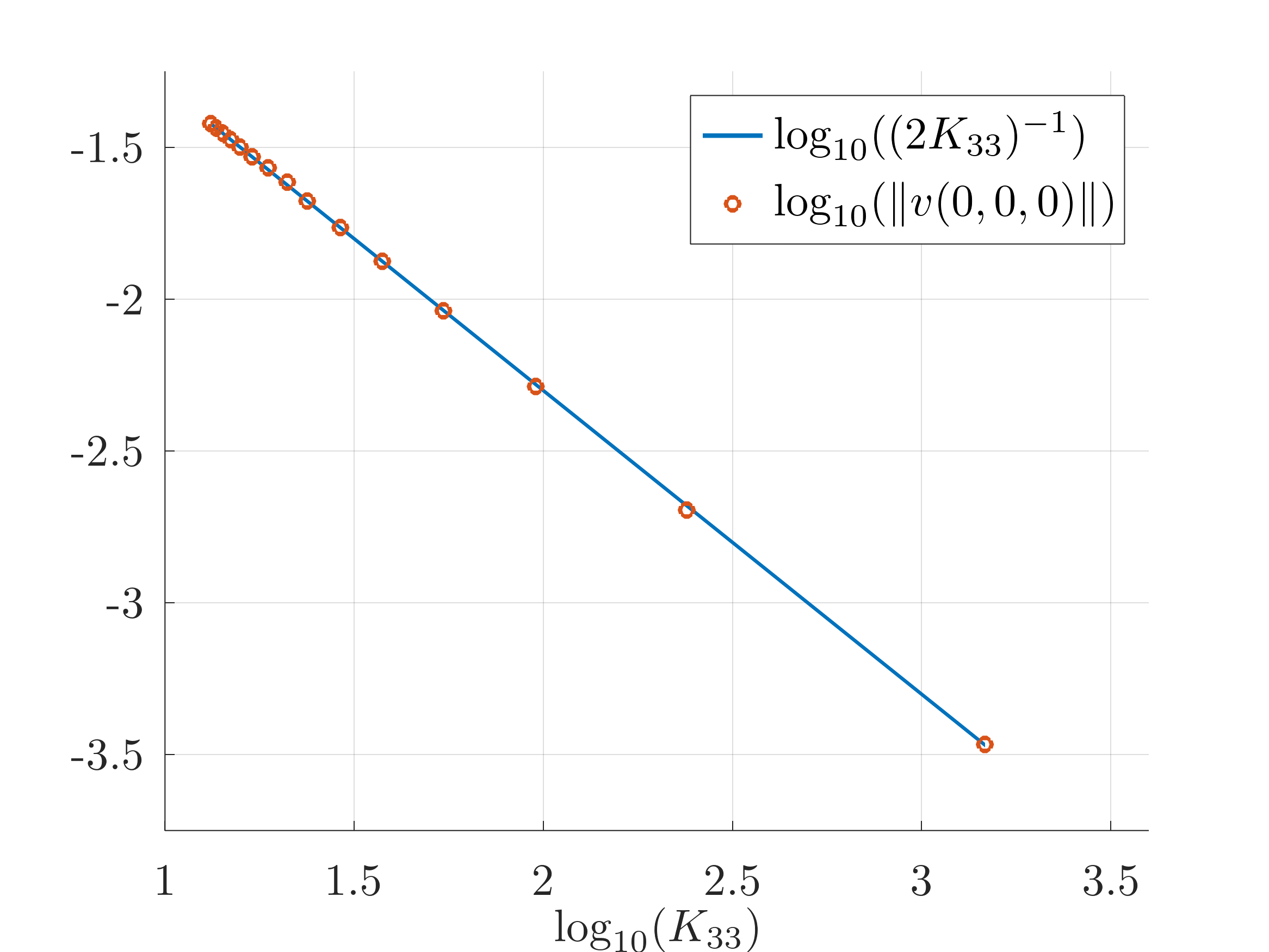}}\hfill
\caption{Detailed study of the macroscopic velocity in Scenario D for different heights $h \in [1.85, 1.99]$ of the $y_3$ cylinder in the cylindrical cross obstacle in the cell. Left: The norm of the macroscopic velocity evaluated at the origin for different values of $h$. Right: Log-log plot of the macroscopic velocity norm over $K_{33}:= K^\pm_{33}$ computed for different values of $h$ together with the curve corresponding to $f(K_{33}) = \frac{1}{2K_{33}}$.} 
\label{fig:scenario_D_h_study}
\end{figure}
\begin{remark} \label{remark_divergence}
In \cite[Section 6.3]{NeussRaduGahn2025}, the model \eqref{prob:macroscopic} together with the boundary conditions from Scenario D was considered for the case $h=2$, and the effective velocity at the interface $\Sigma$ was found to be zero. Thus, by our parameter study in  \cref{fig:scenario_D_h_study}, we show numerically the continuous dependence of the velocity on the parameter $h$ (i.e., on the geometry of the microstructure), in the limit $h\to 2$. We mention that this continuity result is not provided by the homogenization results from \cite{NeussRaduGahn2025}, since  
for values of $h$ approaching $2$ the assumptions on the geometry of the periodicity cell $Z$ are violated: a small scale $2-h$ given by the distance between $Z_s$ and $S^\pm$ appears.
\end{remark}
\subsection{
Numerical study of the approximation accuracy by comparison with the microscopic model}
As mentioned previously, the model \eqref{prob:macroscopic} is an approximation of fluid flow between two bulk domains through a thin porous layer (see \cref{fig:IntroFig}). This was obtained by homogenization and dimension reduction methods in \cite{NeussRaduGahn2025} and \cite{GahnNeussRadu2026}. In this section, we introduce in detail the microscopic $\eps$-model corresponding to the effective model from Scenario A, see Section~\ref{sec:simul_transm_model}. As long as $\varepsilon$ is not too small, it is feasible to generate finite element meshes in \texttt{gmsh} that resolve the microstructure and result in manageable computational cost. This allows us to numerically compare microscopic and effective solutions and provides a numerical justification of the effective transmission model. 

We start by introducing the microscopic geometry:
Assume $\eps^{-1} \in \mathbb{N}$. The domain $\Omega$ consists of the bulk domains $\Oepm$ and the thin layer $\oeps^M$ defined by
\begin{align*}
\oeps^+ := \Sigma \times (\eps,H),  \qquad \oeps^- := \Sigma \times (-H,-\eps), 
\qquad \oeps^M := \Sigma \times (-\eps,\eps).
\end{align*}
The thin layer $\oeps^M$ consists of a fluid and solid introduced in the following.
Let $K_{\eps}:= \{k \in \mathbb{Z}^2 \times \{0\} \, : \, \eps(Z + k) \subset \oeps^M\}$ and   define the fluid/solid part of the thin layer together with its interface 
by
\begin{align*}
\oeps^{M,f} &:= \mathrm{int} \left( \bigcup_{k \in K_{\eps}} \eps \left(\overline{Z_f} + k \right) \right), 
\\
\oeps^{M,s} &:= \mathrm{int} \left( \bigcup_{k \in K_{\eps}} \eps \left(\overline{Z_s} + k \right) \right),
\\
\Geps &:= \mathrm{int}\left( \overline{\oeps^{M,s}} \cap \overline{\oeps^{M,f}}\right).
\end{align*}
The microscopic geometry within $\oeps^M$ is given by periodically arranged cylindrical crosses obtained from the standard cell in \cref{fig:PeriodicityCell-sym_asym}(b).
The interface between the bulk domains $\oeps^{\pm}$ and the thin layer $\oeps^{M}$ is denoted by $S_{\eps}^{\pm} = \Sigma \times \{\pm \eps\}$.
Now, the whole fluid part of the microscopic domain is denoted by
\begin{align*}
    \oef := \Omega \setminus \overline{\oeps^{M,s}}.
\end{align*}

We consider the microscopic incompressible Stokes model for the fluid velocity $\veps$ and the fluid pressure $\peps$ (where we denote by $(\veps^\alpha, \peps^\alpha), \, \alpha \in \{+,-,M\}$, the restrictions of the solution $(\veps, \peps)$ to the corresponding subdomains $\oeps^\alpha$): 
\begin{subequations}\label{MicroscopicModelStationary}
\label{def:micro_equations_fluid_pde_stationary}
\begin{align}
- \nabla \cdot \sigma_{\eps}(\veps,\peps)  &= 0
&\mbox{ in }& \oeps^f,
\\
\nabla \cdot \veps &= 0, &\mbox{ in }& \oef,
\\
\veps^\pm &= \veps^M &\mbox{ on }& S_{\eps}^{\pm}
\\
\sigma_{\eps}(\veps^\pm,\peps^\pm)\cdot \nu & = \sigma_{\eps}(\veps^M,\peps^M)\cdot \nu &\mbox{ on }& S_{\eps}^{\pm},
\\
\veps &= (0,0,-1)^T &\mbox{ on }& \Gamma^+_\text{cap},
\\
\sigma_{\eps}(\veps,\peps)\cdot \nu &= 0
&\mbox{ on }& \Gamma^-_\text{cap},
\\
\label{MicroModelBCDirichletStationary}
\veps &= 0
&\mbox{ on }&  \Geps
\\
\veps,\, \peps &\quad\Sigma\mbox{-periodic},
\end{align}
where the stress tensor $\sigma_{\eps}(\veps,\peps)$ is defined by
\begin{align}
    \sigma_{\eps}(\veps,\peps) \,:=
    \begin{cases}
    \sigma_{\eps}(\veps^\pm,\peps^\pm)
    \\
    \sigma_{\eps}(\veps^M,\peps^M)
    \end{cases}:=
    \begin{cases}
        D(\veps^{\pm}) - \peps^{\pm}I & \mbox{ in } \oeps^{\pm}
        \\
        \eps D(\veps^M) - \eps^{-1} \peps^M I &\mbox{ in } \oeps^{M,f}.
    \end{cases}
\end{align}
\end{subequations}

\cref{fig:microscopicSolutions} shows the microscopic velocity $v_\varepsilon$ and the pressure $p_\varepsilon$ for $\varepsilon = \frac12, \frac14, \frac16, \frac18$. One can observe that the velocity is locally increased in the pores of the membrane. The pressure has a discontinuity at the interface $S_\eps^+$ between the upper bulk domain $\Omega^+_\varepsilon$  and the thin layer $\oeps^M$. At the interface $S^-_\varepsilon$ between the layer and the lower bulk domain the pressure appears to be continuous. Notably, this observation is reversed if the boundary conditions at the upper and lower cap of the domain $\Gamma^\pm_\text{cap}$ are interchanged, whereas the pressure has a discontinuity at both interfaces $S^\pm_\varepsilon$ if the \textit{Dirichlet} conditions $v^\pm_\varepsilon = -e_3$ are imposed on $\Gamma^\pm_\text{cap}$ (not shown).
 \begin{figure}
    \centering
    \subfloat[$\varepsilon = \frac{1}{2}$]{
        \includegraphics[width=0.49\linewidth]{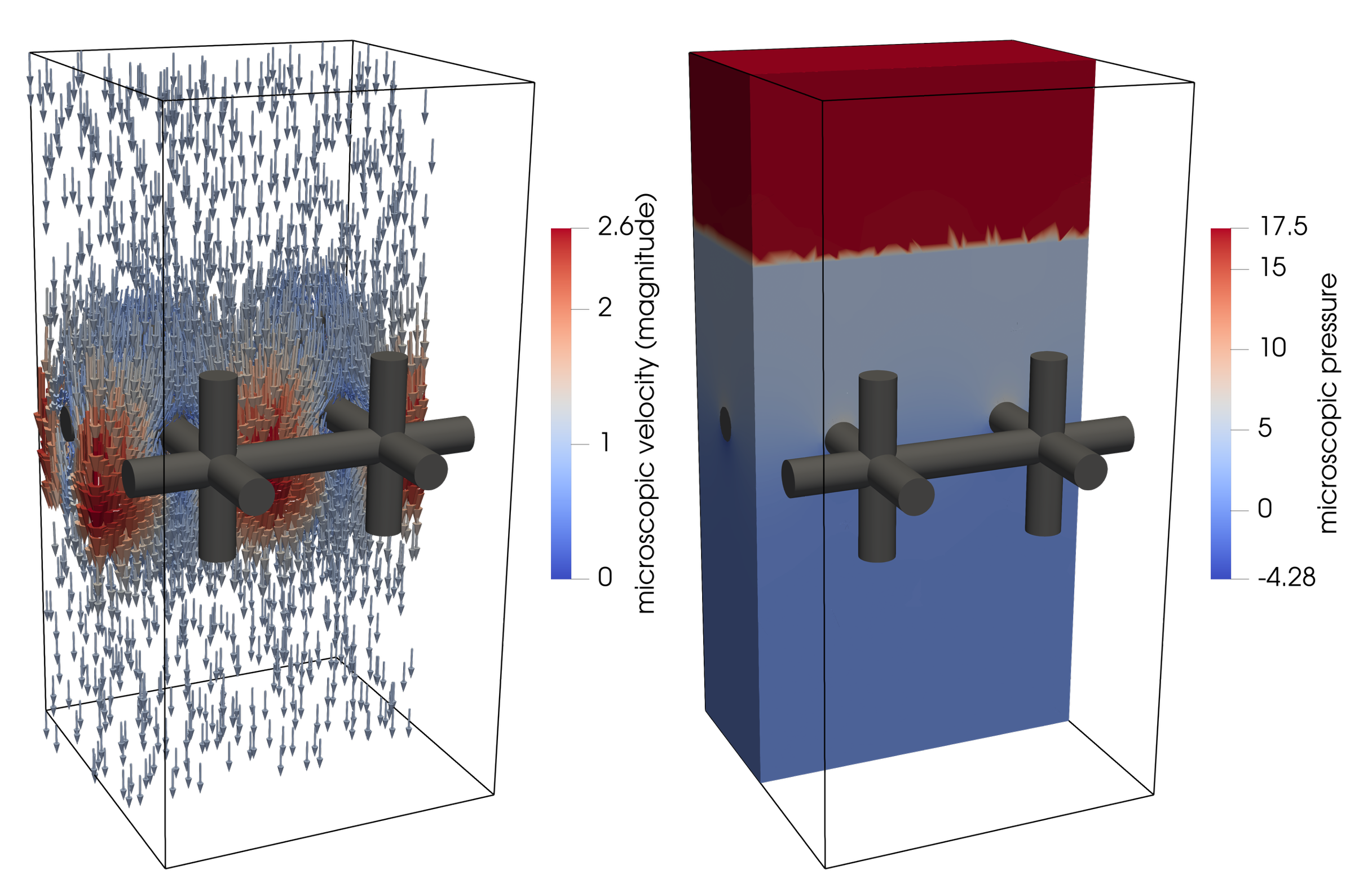}}\hfill
    \subfloat[$\varepsilon = \frac{1}{4}$]{
        \includegraphics[width=0.49\linewidth]{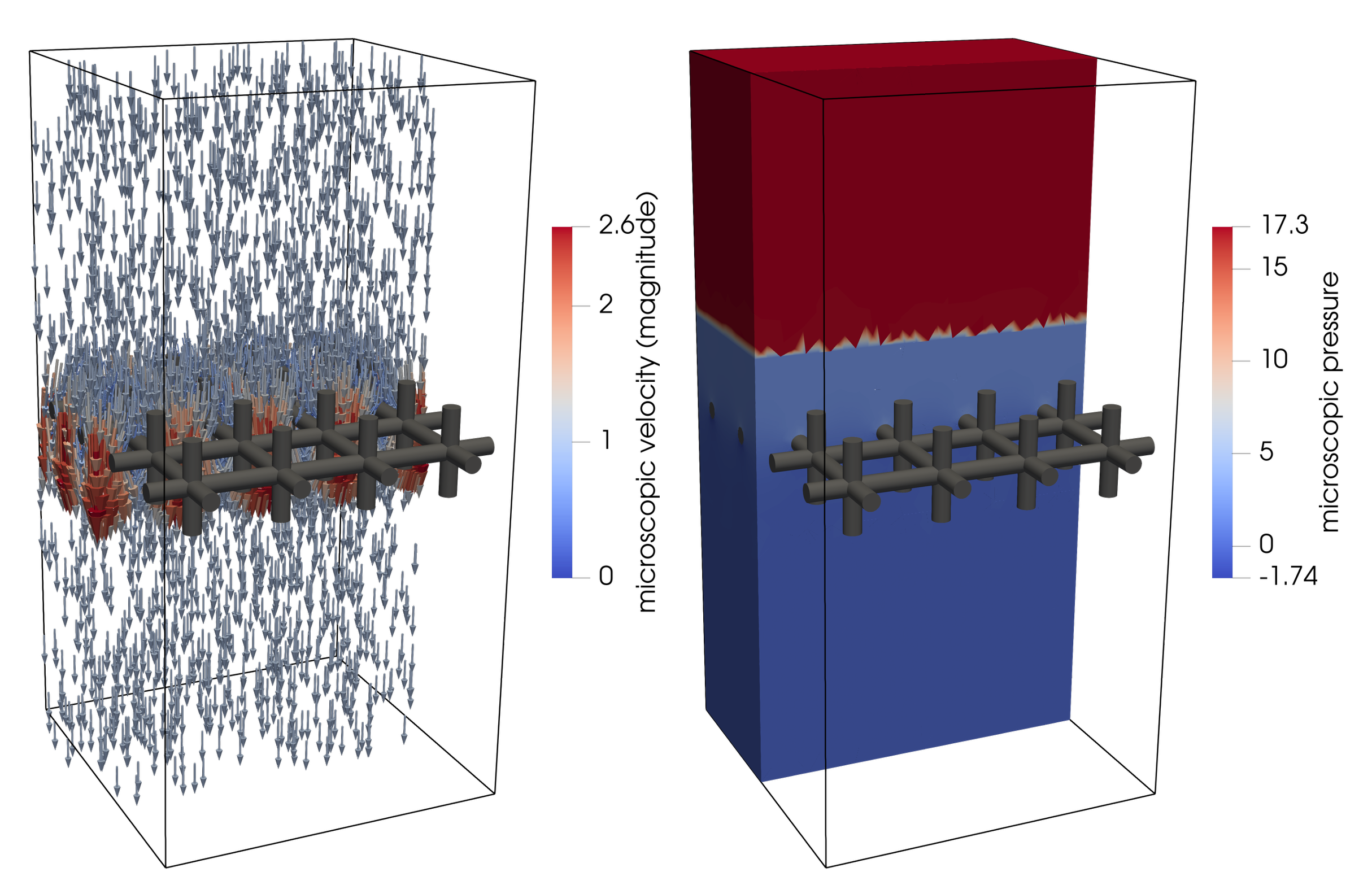}}\hfill
    \subfloat[$\varepsilon = \frac{1}{6}$]{
        \includegraphics[width=0.49\linewidth]{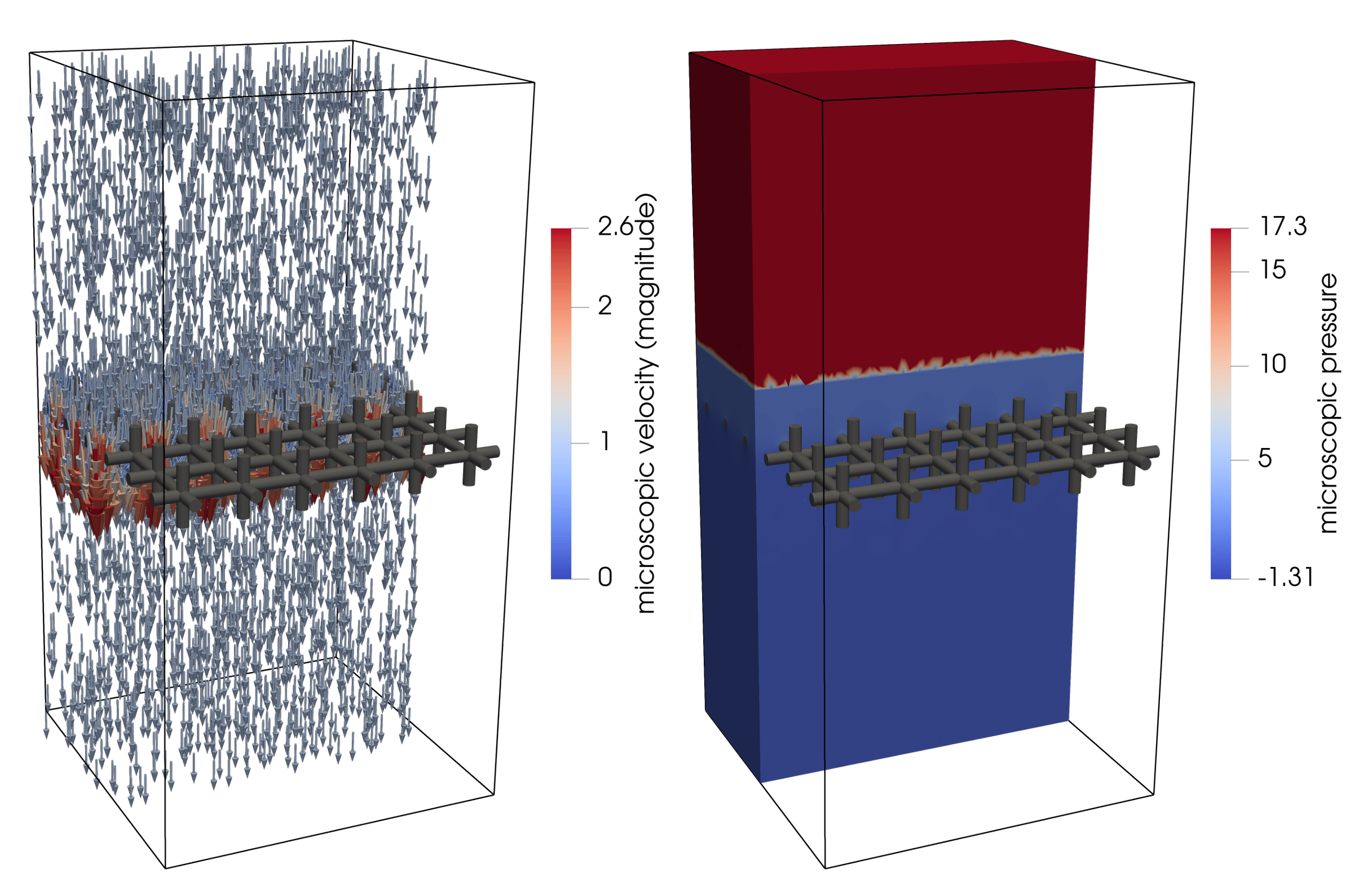}}\hfill
    \subfloat[$\varepsilon = \frac{1}{8}$]{
        \includegraphics[width=0.49\linewidth]{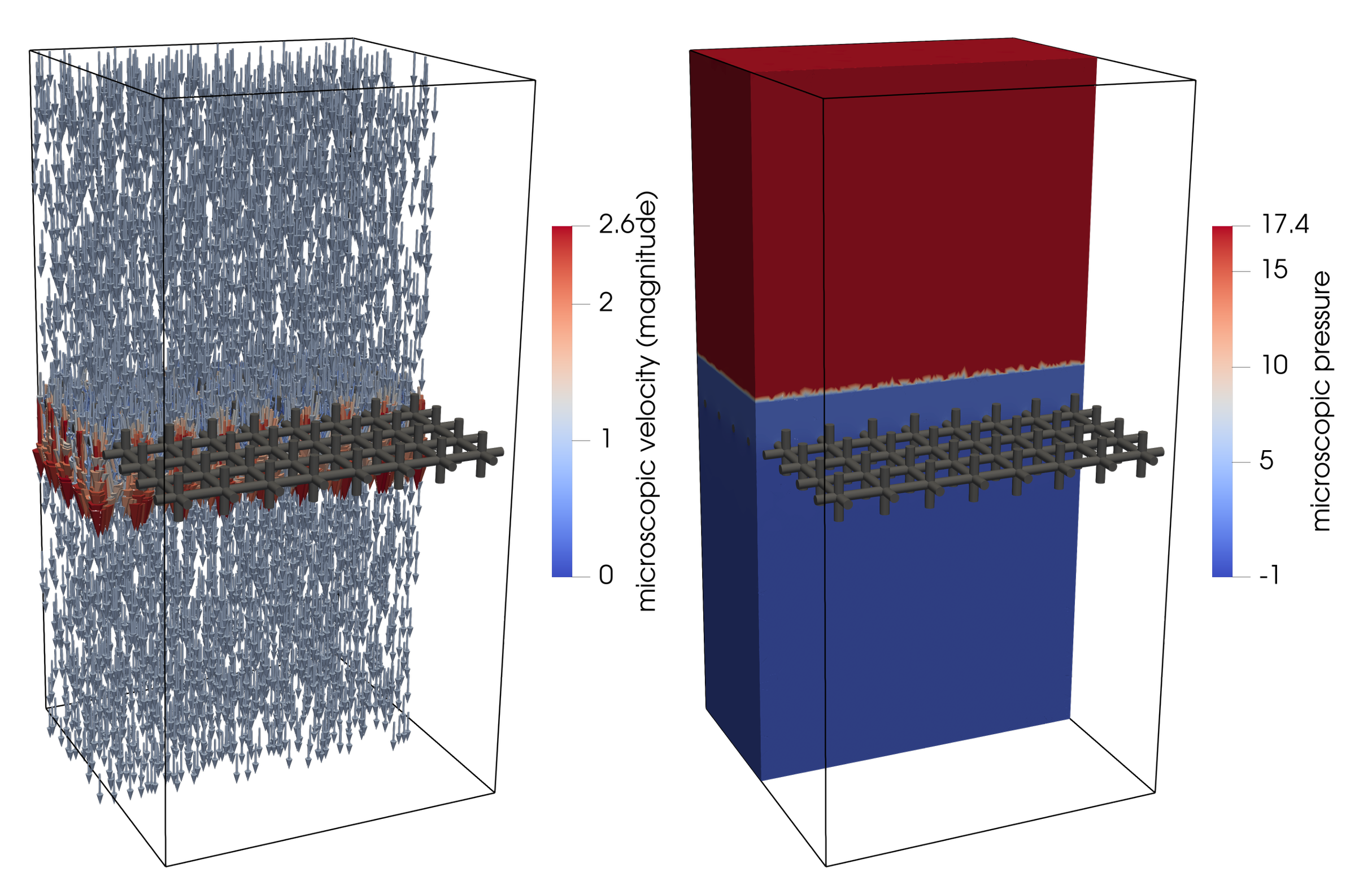}}\hfill
\caption{Numerical solutions $v_\varepsilon$ and $p_\varepsilon$ to the microscopic model \eqref{def:micro_equations_fluid_pde_stationary} for decreasing values of $\varepsilon$.} 
\label{fig:microscopicSolutions}
\end{figure}%
We now compare these microscopic solutions with the effective solution shown in \cref{fig:simulation_results_macroscopic_model} in the first row, third and forth column. At a first glance we can see a good qualitative match, especially at small values of $\eps$. To illustrate more quantitatively how the microscopic solutions are approximated by the effective one, the third component of the velocity as well as the value of the pressure are evaluated along the $x_3$-axis in \cref{fig:convergencePlots}. In (a), the magnitude of the negative microscopic velocity peak at the microstructure remains (almost) constant for decreasing $\varepsilon$ while, however, the width of the peak shrinks. In (b), one can see that the location of the pressure discontinuity approaches the location of the discontinuity of the effective solution at $x_3 = 0$.
\begin{figure}
    \centering
    \subfloat[]{
        \includegraphics[width=0.49\linewidth]{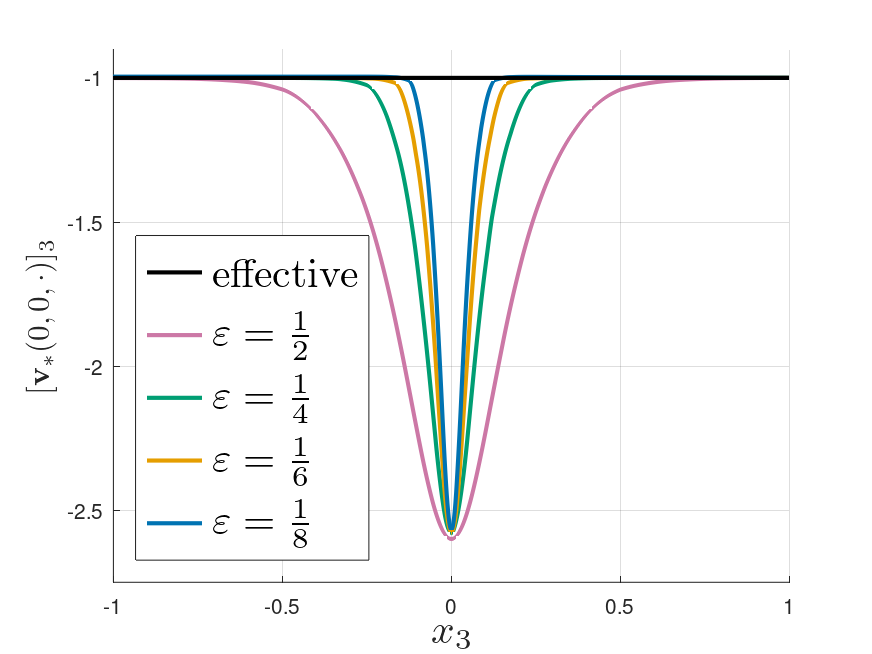}}\hfill
    \subfloat[]{
        \includegraphics[width=0.49\linewidth]{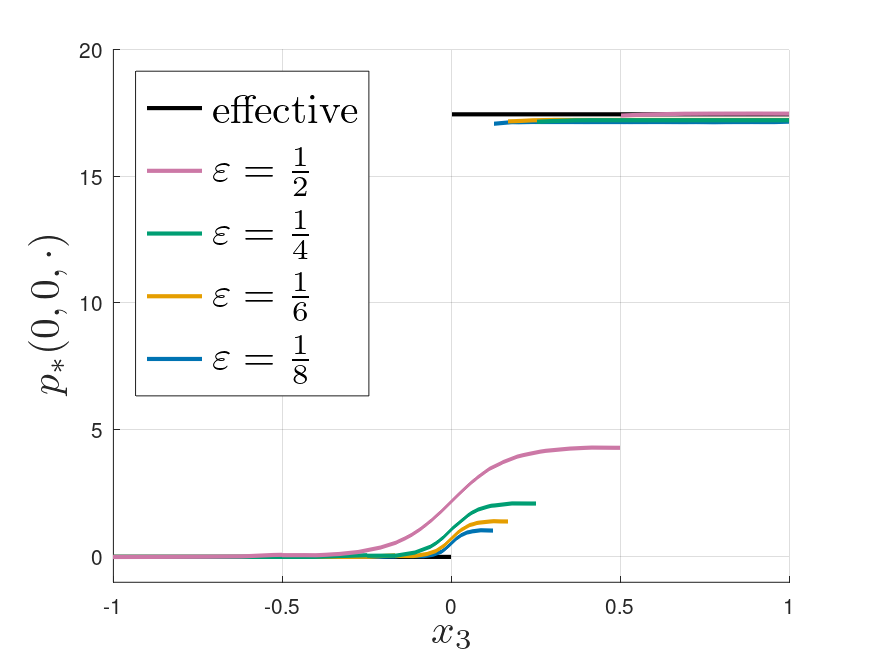}}\hfill
\caption{Comparison of the numerical solution to the effective problem and to the microscopic problems for decreasing values of $\varepsilon$. The lines illustrate the third component of the velocity as well as the pressure, evaluated along the line $(0,0,x_3)$, $ x_3 \in [-1,1]$.} 
\label{fig:convergencePlots}
\end{figure}
\section{Conclusion and outlook}
\label{sec:conclusionoutlook}
In this paper, we provided numerical methods for the analysis and implementation of effective transmission models  with coefficients computed from microscopic cell solutions for fluid flow through porous membranes. The numerical simulations offered new insights about the localization and symmetry properties of cell solutions, the structure of effective coefficients and the impact of the membrane microstructure on the behavior of the macroscopic (effective) solution. 

On the microscale, we considered the case of a rigid solid. For many applications, in particular in biological tissues, it is essential to account for the elasticity of the solid phase, which leads to an additional fluid–structure interaction at the effective interface $\Sigma$. Corresponding effective interface models have been rigorously derived in \cite{gahn2025beffective}, and their numerical treatment is part of our ongoing work. A related model was considered in \cite{KrierOrlikPanasenkoSteiner2024}, where the deformation of the effective interface is described by a Kirchhoff–Love plate equation and the fluid transport through the plate is taken into account by means of a phenomenological Darcy-type interface law. A crucial difference from the model derived in \cite{gahn2025beffective}, besides the continuity of the tangential fluid velocity assumed in \cite{KrierOrlikPanasenkoSteiner2024}, concerns the effective coefficients governing the fluid transport and its coupling to the elastic displacement. While the Darcy-type transmission law in \cite{KrierOrlikPanasenkoSteiner2024} involves the classical permeability tensor, the coefficients in \cite{gahn2025beffective} are obtained from cell problems specifically adapted to the underlying microscopic fluid–structure interaction and enter more general Navier-slip-type interface conditions. Consequently, the numerical methods developed in the present paper have to be extended to account for this more complex fluid–structure interaction and the associated generalized cell problems.

\section*{Acknowledgments}
The first author acknowledges the funding by the Deutsche Forschungsgemeinschaft (DFG, German Research Fundation) within the research project DyNano (Dynamics and Control of Superparamagnetic Iron Oxide Nanoparticles in Simple and Branched Vessels) - Project-ID 518492286.

\end{document}